\theoremstyle{plain}
\newtheorem{thm}{Theorem}[section]
\newtheorem*{thma}{Theorem A}
\newtheorem*{thma'}{Theorem A$'$}
\newtheorem*{thmb}{Theorem B}
\newtheorem*{thmb'}{Theorem B$'$}
\newtheorem{prop}[thm]{Proposition}
\newtheorem{lem}[thm]{Lemma}
\newtheorem{cor}[thm]{Corollary}
\newtheorem{defn}{Definition}
\newtheorem{obs}{Observation}
\newcommand{\divhull}[1]{\langle {#1}\rangle_{\mathbb{Q}}}
\newcommand{\ek}[1]{k(( {#1})) }
\begin{document}

\title{Real closed exponential fields}

\author{P.\ D'Aquino, J.\ F.\ Knight, S.\ Kuhlmann, and K.\ Lange}

\maketitle


\abstract{In an extended abstract \cite{R}, Ressayre considered real closed exponential fields and integer parts that respect the exponential function.  He outlined a proof that every real closed exponential field has an exponential integer part.  In the present paper, we give a detailed account of Ressayre's construction.  The construction becomes canonical once we fix the real closed exponential field $R$, a residue field section, and a well ordering
$<$.  The construction is clearly constructible over these objects.  Each step looks effective, but it may require many steps.  We produce an example of an exponential field $R$ with a residue field $k$ and a well ordering $<$ such that $D^c(R)$ is low and $k$ and $<$ are $\Delta^0_3$, and Ressayre's construction cannot be completed in $L_{\omega_1^{CK}}$.}

\section{Introduction}

\begin{defn}

A \emph{real closed field} is an ordered field in which every non-negative element is a square, and every odd degree polynomial has a root.

\end{defn}

Tarski's celebrated elimination of quantifiers \cite{Tarski} shows that the axioms for real closed fields generate the complete theory
of the ordered field of reals, so this theory is decidable.

\begin{defn} [Integer part]
An integer part of an ordered  field $R$ is a discretely ordered subring $Z$ such that for each $r\in R$, there exists $z\in Z$ with $z\leq r < z+1$.
\end{defn}

If $R$ is Archimedean, then $\mathbb{Z}$ is the unique integer part.  In general, the integer part for $R$ is not unique.  Shepherdson \cite{S} showed that  a discrete ordered ring $Z$ is an integer parts of some real closed fields if and only if $Z$ satisfies a fragment of Peano Arithmetic called {\em Open Induction}.  Open Induction is the first order theory, in the language $\mathcal{L}=\{+, \cdot, <, 0, 1\}$, of discretely ordered commutative rings with a multiplicative identity $1$ whose set of non-negative elements satisfies, for each quantifier-free formula $\Phi(x, y)$, the associated induction axiom $I(\Phi)$:
\begin{equation*}
(\forall y)[\Phi(0, y) \ \&\ (\forall x)[\Phi(x, y)\rightarrow \Phi(x+1, y)]\rightarrow (\forall x)\ \Phi(x, y)].
\end{equation*}




We now consider real closed fields.   In \cite{MR}, Mourgues and Ressayre proved the following.

\begin{thma} Every real closed field has an integer part.
\end{thma}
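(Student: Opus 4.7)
The plan is to follow Mourgues and Ressayre's approach via Hahn series: embed $R$ into a field of generalized power series and read off an integer part from the "infinite" parts of the series. First I would equip $R$ with its natural valuation $v$, whose value group $G$ is a divisible ordered abelian group and whose residue field is Archimedean real closed, hence isomorphic to a subfield of $\mathbb{R}$. After fixing a section $k \hookrightarrow R$ lifting the residue field, the target ambient object is the Hahn series field $\kG$, where the monomial $t^g$ has value $g$, so terms with $g<0$ are infinite and terms with $g>0$ are infinitesimal.

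The heart of the construction is a valuation-preserving embedding $\iota : R \to \kG$ over $k$ whose image is \emph{truncation closed}: if $x = \sum_g a_g t^g$ lies in $\iota(R)$ and $g_0 \in G$, then the truncation $\sum_{g<g_0} a_g t^g$ also lies in $\iota(R)$. I would build $\iota$ by well-ordering $R$ and proceeding by transfinite recursion, handling three kinds of extension step at successor stages: a purely transcendental extension by an element whose Hahn expansion is prescribed by a pseudo-Cauchy sequence from the current image, closure under the field operations, and (crucially) closure under real roots. Unions at limit stages manifestly preserve truncation-closedness.

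Given such an $\iota$, I extract the integer part element by element. For $x = \sum_g a_g t^g \in \iota(R)$, set
\[
\lfloor x \rfloor \;=\; \sum_{g<0} a_g t^g \;+\; \lfloor a_0 \rfloor_{\mathbb{Z}},
\]
where $\lfloor a_0 \rfloor_{\mathbb{Z}}$ is the ordinary integer part of the Archimedean real $a_0 \in k \subseteq \mathbb{R}$. Truncation-closedness of $\iota(R)$ places the infinite piece inside $\iota(R)$, and $\mathbb{Z} \subseteq k$ supplies the integer adjustment. A routine check shows that $\{\lfloor x \rfloor : x \in \iota(R)\}$ is a discretely ordered subring of $\kG$ lying inside $\iota(R)$ and satisfying $\lfloor x \rfloor \leq x < \lfloor x \rfloor + 1$; pulling it back through $\iota$ gives an integer part of $R$.

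The main obstacle is preserving truncation-closedness under the real closure step. When one adjoins a root $\alpha$ of a polynomial $p(T)$ over a truncation-closed subfield $F \subseteq \kG$, one must select a root whose Hahn support, together with each of its truncations, is controlled by $F$ together with previously constructed roots. Mourgues and Ressayre handle this by induction on the Newton polygon of $p$: one identifies the leading monomial $c t^{g_0}$ of $\alpha$, subtracts it, and recurses on the polynomial $p(T + c t^{g_0})$ over the enlarged field $F(c t^{g_0})$. Verifying that this peeling procedure actually terminates with a genuine root and produces a truncation-closed extension is the technical core of the argument.
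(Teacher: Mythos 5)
Your proposal follows exactly the Mourgues--Ressayre strategy the paper uses: construct a valuation-preserving, truncation-closed embedding $\delta : R \hookrightarrow k((G))$ over a residue field section, then take the integer part to be the preimage of $Z_F = \{t + z : t \in \delta(R)\cap k((G^{>1})),\ z\in\mathbb{Z}\}$ (your $\sum_{g<0}a_g t^g + \lfloor a_0\rfloor_\mathbb{Z}$ is the same thing in additive value-group notation). You also correctly identify that the technical core is preserving truncation closure through the real-closure step, which is the paper's Lemma~\ref{development lemma}.
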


 In an extended abstract \cite{R}, Ressayre outlines the proof for  an  analogue of Theorem A  for exponential real closed fields:

\begin{defn}\label{D:EXP}
A real closed exponential field is a real closed field $R$ endowed with an isomorphism of ordered groups:
\begin{center}
$(R,+,0,<) \rightarrow (R^{>0},\cdot ,1,<)$
\vspace{.1in}

$x\mapsto 2^x$
\end{center}
where $(R,+,0,<) $ is the additive  group of $R$ and $(R^{>0},\cdot ,1,<)$ is the multiplicative group of positive elements  of $R$. That is,  $2^x$ satisfies the following axioms:

\vspace{.2cm}
\noindent \it{1.} $2^{x+y} = 2^x2^y$,

\vspace{.2cm}
\noindent  \it{2.}  $x < y$ implies $2^x < 2^y$,

\vspace{.2cm}
\noindent  \it{3.} for all $x > 0$, there exists $y$ such that $2^y = x$; i.e., $\log(x)$ is defined.

\vspace{.2cm}
\noindent  We require also the following:

\vspace{.2cm}
\noindent \it{4.} $2^1 = 2$,

\vspace{.2cm}
\noindent  \it{5.}   for all $x\in R$, $x>n^2$ implies  $2^x>x^n$  \hspace{.1in}($n\geq 1$).
\end{defn}

\vspace{.2cm}

We now consider integer parts closed under exponentiation in real closed exponential fields.

\begin{defn} [Exponential integer part]
Let $R$ be a real closed exponential field.  An \emph{exponential integer part} is an integer part $Z$ such that for all positive $z\in Z$, we have $2^z\in Z$.
\end{defn}

\noindent
We observe that $\mathbb{Z}$ is an exponential integer part for any Archimedean real closed field.  In an extended abstract \cite{R}, Ressayre outlined the proof for the analogue of Theorem A for real closed exponential fields.

\begin{thmb} [Ressayre]
\label{Ressayre}

If $R$ is a real closed exponential field, then $R$ has an exponential integer part.

\end{thmb}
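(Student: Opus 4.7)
The plan is to adapt the Mourgues--Ressayre construction used to prove Theorem A so that the resulting integer part is automatically closed under $2^x$. For a real closed field $R$ with residue field section $k$ and value group $G$ under the natural valuation, that construction yields a truncation-closed, order-preserving embedding $\iota \colon R \hookrightarrow \kG$, and the pullback of $\mathrm{Int}(\kG) = k((G^{<0})) \oplus \mathrm{Int}(k)$ is an integer part of $R$. Our aim is a refined version in which $\iota$ also respects $2^x$ in the following sense: whenever $\iota(r) = r_{-} + r_0 + r_{+}$ is the decomposition into purely infinite, constant, and infinitesimal parts, $\iota(2^r)$ has its own infinite part equal to a monomial $t^{g}$ determined canonically by $r_{-}$.

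The construction proceeds by transfinite recursion along the well-ordering $<$ of $R$. At each stage we have a subfield $R_\alpha \subseteq R$ with a truncation-closed embedding $\iota_\alpha \colon R_\alpha \hookrightarrow k((G_\alpha))$ where $G_\alpha \subseteq G$. To extend, we choose the $<$-least $r \in R \setminus R_\alpha$, adjoin it, and close off under the field operations, under real closure (as in the original proof of Theorem A), and additionally under $x \mapsto 2^x$ and $x \mapsto \log x$. Each closure may itself demand a further transfinite iteration, since introducing $2^r$ or $\log r$ produces new elements whose exponentials and logarithms must then also be added; this matches the comment in the abstract that each step ``looks effective, but may require many steps''. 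Truncation-closure, carried along throughout, guarantees in particular that the purely infinite part of every element of $\iota(R_\alpha)$ is itself in $\iota(R_\alpha)$.

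The crucial new ingredient is the extension of $\iota$ across $2^x$. For $r \in R$ with $\iota(r) = r_{-} + r_0 + r_{+}$, we want
\[
  \iota(2^r) \;=\; t^{\,g(r_{-})} \cdot 2^{r_0} \cdot \sum_{n \geq 0} \frac{(\log 2)^n r_{+}^n}{n!},
\]
where the middle factor lives in $k$ (assumed closed under $2^x$ by taking $k$ to be an exponential subfield), the last factor converges in the Hahn field because $r_{+}$ is infinitesimal, and the first factor is a monomial with exponent $g(r_{-}) \in G$ representing $v(2^r)$. The main obstacle is to build the map $a \mapsto g(a)$ from purely infinite elements to $G$ as an order-preserving group homomorphism compatible with the additive identity $2^{a+b} = 2^a \cdot 2^b$, while enlarging $G_\alpha$ at each stage to accommodate the new generators in the correct order-theoretic positions. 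This is the core content of Ressayre's argument and the point that must be verified with care.

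Once the recursion exhausts $R$, we obtain a truncation-closed embedding $\iota \colon R \hookrightarrow \kG$ respecting $2^x$ in the above sense. Any positive $x$ in the pulled-back integer part $\iota^{-1}(k((G^{<0})) \oplus \mathrm{Int}(k))$ has $r_{+} = 0$ and $r_0 \in \mathrm{Int}(k)$, whence $\iota(2^x) = t^{g(r_{-})} \cdot 2^{r_0}$ lies again in $k((G^{<0})) \oplus \mathrm{Int}(k)$, so the pulled-back integer part is closed under $2^x$ and serves as the required exponential integer part.
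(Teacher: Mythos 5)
Your high-level plan is the one the paper also follows: build a truncation-closed embedding $\iota\colon R\hookrightarrow k((G))$ compatible with $2^x$, and then pull back the truncation integer part. But the formula you propose for $\iota(2^r)$ is stronger than necessary and imports hypotheses the paper carefully avoids. For $z$ in the truncation integer part the constant part $r_0$ is an integer and the infinitesimal part $r_+$ vanishes, so $2^z=2^{r_0}\cdot 2^{y}$ with $y$ purely infinite and $2^{r_0}\in\mathbb{N}$; all that is needed is that $2^y$ land in $G$. Demanding instead the global identity $\iota(2^r)=t^{g(r_-)}\cdot 2^{r_0}\cdot\sum_{n\geq 0}(\log 2)^n r_+^n/n!$ requires the residue field section to be closed under $2^x$ and to contain $\ln 2$ in order for the middle factor and the formal series to make sense --- neither is automatic for a residue field section, and the paper deliberately assumes neither. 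The paper's compatibility condition, $\delta(\log G)=\delta(R)\cap k((G^{>1}))$, speaks only about the purely infinite part and already suffices, by Lemma \ref{L:expIP}.

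More importantly, what you flag as ``the main obstacle \dots the core content of Ressayre's argument'' is in fact the whole nontrivial content of the theorem, and your proposal leaves it unproved. In the paper this is Proposition \ref{main technical}: starting from a maximal dyadic development triple and a new element $y$, one adjoins the iterated logarithms $\log^i y$ to the value group section, then iterates the operations of passing to a maximal triple and of adjoining $2^r$ to the group for every $r$ with purely infinite development, and at each step must verify that the result is still a value group section (Lemmas \ref{L:Halphalog} and \ref{L:valimp=}) and that the process stabilizes at a dyadic triple. These verifications --- e.g.\ that $v(y_n)\notin H_{0,n}$, which rests on the growth estimate of Lemma \ref{y_iincr} and hence on the growth axiom (5) for $2^x$ --- are where the proof lives. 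Identifying that this ordering-compatible homomorphism $a\mapsto g(a)$ must be built and then deferring its construction leaves the proposal a correct statement of the strategy rather than a proof.
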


In this paper we revisit Ressayre's  extended abstract, providing the details of the proofs, and focusing on the complexity of the construction.
In \S \ref{S:AlgPrelim}, we give the necessary  algebraic preliminaries. In \S \ref{S:MRconst}, we briefly outline Mourgues and Ressayre's construction of an integer part for a real closed field.  In \S \ref{S:Rconst}, we provide the details of Ressayre's construction of an exponential integer part for a real closed exponential field.  The construction is canonical with respect to a given real closed field, a residue field section, and a well ordering of the elements of the real closed field.  In \S \ref{S:compEIP}, we look at the complexity of Ressayre's construction.  We produce a low real closed exponential field $R$, with a $\Delta^0_3$ residue field section $k$ and a $\Delta^0_3$ ordering $<$ of type $\omega+\omega$, so that Ressayre's construction applied to these inputs is not completed in the least admissible set.  

Since our $R$ is recursively saturated, there is another exponential integer part that is $\Sigma^0_2$.  In general, for a countable real closed exponential field, we may use $\Sigma$-saturation (an old notion due to Ressayre), to produce an exponential integer part $Z$ such that $\omega_1^{(R,Z)} = \omega_1^Z$; i.e., $(R,Z)$ is an element of  a fattening of the least admissible set over $R$.


\section{Algebraic preliminaries}\label{S:AlgPrelim}

In this section, we give some algebraic background for the construction Mourgues and Ressayre. We recall the natural valuation on an ordered field $R$.

\begin{defn} [Archimedean equivalence]

For $x,y\in R^{\times}:=R\setminus \{0\}$, $x\sim y$ iff there exists $n\in \mathbb N$ such that $n|x| \geq |y|$ and $n |y|\geq |x|$, where $|x|:= \mbox{ maxÊ} \{ x, -x \}.$    We denote the equivalence class of $x\in R$ by $v(x)$.

\end{defn}


\begin{defn} [Value group]
 The {\em value group of $R$} is  the set of equivalence classes $v(R^{\times})=\{v(x)\mid x\in R^{\times}\}$ with  multiplication on $v(R^{\times})$ defined to be $v(x)v(y)=v(xy)$.   We endow $v(R^{\times})$ with the order
\begin{center}
$v(x)<v(y)$ if   $(\forall n\in \mathbb N)[\, n |x|<|y|\, ]$.
\end{center}
By convention, we let ${v(0)}< v(R^{\times})$.

\end{defn}

Under the given operation and ordering, $v(R^{\times})$ is an ordered Abelian group with identity $v(1)$.  Moreover, the map $x\mapsto v(x)$ is a {\em valuation}, i.e. it satisfies the axioms
$v(xy)=v(x)v(y)$ and   $v(x+y)\leq \mbox{ max }  \{ v(x), v(y)\} $.

 If $R$ is a real closed field, then the value group $v(R^{\times})$ is divisible \cite[Theorem 4.3.7]{EP}.
  An Abelian group $(G, \cdot)$ is {\em{divisible}} if for all $g\in  G$ and $0\not=n\in \mathbb N$, $g^{\frac{1}{n}}\in G$.  Note that a divisible Abelian group $(G, \cdot)$ is a     $\mathbb{Q}$-vector space when scalar multiplication by $q\in \mathbb{Q}$ is defined to be $g^q$.  This observation motivates the following definition.

\begin{defn} [Generating set]
Let $(G, \cdot)$ is a divisible Abelian group.  We say that $B$ is a
\emph{generating set} if
 each element of $G$ can be expressed as a finite product of rational powers of elements of $B$.   We denote the Abelian Group generated by a set $B\subset R$ by $\divhull{B}$.

\end{defn}

\begin{defn}  [Value group section]
A {\em value group section} is the image of an embedding of ordered  groups $t:v(R^{\times}) \hookrightarrow R^{>0}$ such that $v(t(g))=g$ for all $g\in v(R^{\times})$.
\end{defn}

If $R$ is  real closed field, there are subgroups of $(R^{>0},\cdot)$ that are value group sections (see  \cite[Theorem 8]{Kap}).  Note that we use the term ``value group section'' to refer to the image of the described embedding, not the embedding itself.
In \cite{KL}, it is shown that for a countable real closed field $R$, there is a value group section $G$ that is
$\Delta^0_2(R)$.  Moreover, this is sharp.  There is a computable real closed field $R$ such that the halting set $K$ is computable relative to every value group section.









\begin{defn} [Valuation ring]

The \emph{valuation ring} is the ordered ring \[\mathcal O_v:= \{ x\in R: v(x)\leq 1\}\] of  finite elements.

\end{defn}

The valuation ring has a unique maximal ideal \[\mathcal M_v:=\{ x\in R: v(x)<1\}\] of  infinitesimal elements.

\begin{defn} [Residue field]

The \emph{residue field} is the quotient $\mathcal O_v / \mathcal M_v$.

\end{defn}

The residue field $k$ is an ordered field under the order induced by $R$.  It is Archimedean, so it is isomorphic to a subfield of $\mathbb R$. We denote the residue of $x\in \mathcal O_v$ by $\overline{x}.$

\begin{defn} [Residue field section]
A {\em residue field section} is the image of an embedding of ordered  fields $\iota :k \hookrightarrow R$ such that $\overline{\iota (c)}=c$ for all $c\in k$.
\end{defn}

 If $R$ is a real closed field, then  $k$ is real closed \cite[Theorem 4.3.7]{EP} and residue field sections exist  \cite[Theorem 8]{Kap}.
 To construct a residue field section, we look for a maximal real closed Archimedean subfield.  In \cite{KL}, the second and fourth authors proved the following result on the complexity of residue field sections.  

\begin{prop}
\label{residue}

For a countable real closed field $R$, there is a residue field section that is
$\Pi^0_2(R)$.

\end{prop}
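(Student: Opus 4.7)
The plan is to build the residue field section $F$ as an ascending union $F = \bigcup_s F_s$ of finitely generated real-closed Archimedean subfields of $R$, driven by an $R$-computable enumeration $r_0, r_1, \ldots$ of $R$ (available since $R$ is countable). Start with $F_0$, the relative algebraic closure of $\mathbb{Q}$ in $R$. At stage $s+1$, inspect $r_s$: check whether $r_s$ is finite (the $\Sigma^0_1(R)$ condition $\exists n\,(|r_s|<n)$) and non-infinitesimal ($\exists n\,(n|r_s|>1)$), and whether $\overline{r_s}$ is already realized in $\overline{F_s}$ (i.e., whether some $y\in F_s$ satisfies $r_s-y\in\mathcal{M}_v$). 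If $r_s$ fails finiteness or non-infinitesimality, or if its residue is already realized, set $F_{s+1}=F_s$; otherwise $\overline{r_s}$ is transcendental over $\overline{F_s}$ (because $F_s$ is real closed, so any residue algebraic over $\overline{F_s}$ would already have a lift in $F_s$), and take $F_{s+1}$ to be the real closure of $F_s(r_s)$ inside $R$.

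For $F:=\bigcup_s F_s$, the three requirements of a residue field section follow: (i) $F$ is a real-closed subfield of $R$, as the ascending union of real-closed subfields; (ii) $F$ is Archimedean because each purely transcendental extension $F_s(r_s)$ is Archimedean when $\overline{r_s}$ is transcendental over $\overline{F_s}$ (the denominator of any rational expression in $r_s$ has a non-infinitesimal reduction, using transcendence in the residue), and the real closure in $R$ preserves the Archimedean property via bounds coming from minimal polynomials; and (iii) the residue map $F\to k$ is surjective, since every $\alpha\in k$ equals $\overline{r_s}$ for some enumerated $r_s$, and then either $\alpha\in\overline{F_s}$ already or $\alpha$ is adjoined at stage $s+1$.

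The main obstacle is pinning $F$ down to $\Pi^0_2(R)$ rather than the naive complexity coming directly from the construction. The decision at stage $s$ is itself of the form $\forall y\in F_s\,\exists n\,(n|r_s-y|>1)$, i.e., $\Pi^0_2(R)$, so a direct reading places $F$ at the level of $0''$. To collapse to $\Pi^0_2(R)$, I would characterize membership by: $y\in F$ iff $y$ is finite and $\forall n\,\exists s\geq n$ such that $y$ lies in a stage-$s$ tentative approximation $F^s$ built using only finitely much information (a polynomial relation of bounded degree over a finite set of committed transcendental lifts, together with consistency checks against infinitesimality up to precision $1/s$). Designing the approximations so that the genuine $F$ coincides exactly with the set of $y$ appearing cofinally in the $F^s$ is the technical heart of the argument, and the bookkeeping is carried out in \cite{KL}.
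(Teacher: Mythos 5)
The paper does not prove Proposition~\ref{residue}; it simply cites \cite{KL}, with the only hint being the remark that one should ``look for a maximal real closed Archimedean subfield.'' Your high-level plan --- greedily building a maximal real closed Archimedean subfield from a fixed enumeration of $R$, adjoining $r_s$ exactly when it is finite and its residue is transcendental over the residues already captured --- is precisely this approach, and your verification that the limit $F$ is an Archimedean real closed subfield with $\overline{F}=k$ is correct.

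However, the proposal has a genuine gap, and it is exactly the part you yourself flag as ``the technical heart of the argument'' and defer to \cite{KL}. The stage-$s$ decision in the greedy construction is whether $\overline{r_s}$ lies in $\overline{F_s}$, which is $\Sigma^0_2(R)$, so the sequence of adjunction decisions is $\Delta^0_3(R)$ and a naive reading of $y\in F\iff\exists s\,(y\in F_s)$ is no better than $\Delta^0_3(R)$. Your proposed fix --- replace this by ``$y$ is finite and for all $n$ there is $s\geq n$ with $y$ in a finitely-presented tentative approximation $F^s$'' --- has the right syntactic shape ($\Pi^0_2$), but you give no description of the $F^s$ and no argument that the resulting set is the field you constructed, or even a field at all. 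Indeed, ``cofinal membership'' is not closed under $+$ or $\cdot$ in general: if $y_1\in F^s$ only for $s$ in one infinite set and $y_2\in F^s$ only for $s$ in a disjoint infinite set, both are admitted but $y_1+y_2$ may never appear in any $F^s$. Making the approximations coherent enough to avoid this, while keeping each $F^s$ computable from $s$ and $R$, while still catching exactly the maximal Archimedean real closed subfield, is the entire content of the result --- and it is a delicate point, since (as the paper notes immediately after, in its Proposition~2.5) the $\Pi^0_2$ bound is sharp: some computable $R$ has no $\Sigma^0_2$ residue field section, so no ``eventually stable'' approximation scheme can work. As written, your proposal reduces to the same citation the paper uses; it identifies the right object to build and the right obstacle, but it does not supply the argument.
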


Proposition \ref{residue} is sharp in the following sense.

\begin{prop}

There is a computable real closed field with no $\Sigma^0_2$ residue field section.

\end{prop}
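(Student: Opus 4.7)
The plan is to mirror the sharpness argument for value group sections from \cite{KL}, shifted one quantifier higher in the arithmetical hierarchy. Since Proposition~\ref{residue} places every residue field section at $\Pi^0_2$, I will fix a $\Pi^0_2$ set $A \subseteq \omega$ that is not $\Sigma^0_2$ (such $A$ exists by the arithmetical hierarchy theorem) and code it into a computable real closed field $R$ so that any $\Sigma^0_2$ residue field section would put both $A$ and its complement in $\Sigma^0_2$, contradicting the choice of $A$.

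Concretely, I would realize $R$ as a computable real closed subfield of a Hahn series field $k((t^G))$, where $k$ is chosen to have a countable transcendence basis $\{\pi_n : n \in \omega\}$ over $\mathbb{Q}$, so that every residue field section has genuine freedom in choosing representatives of the residue classes $\pi_n + \mathcal{M}_v$. For each $n$, the construction provides two candidate lifts $\tau_n, \sigma_n \in \mathcal{O}_v$ with $\overline{\tau_n} = \overline{\sigma_n} = \pi_n$, engineered so that exactly one of the two can extend to a full residue field section compatible with the algebraic relations already present in $R$, and so that this forced choice is $\tau_n$ iff $n \in A$. The forcing is accomplished by adjoining auxiliary algebraic witnesses (for example, roots of carefully chosen polynomials in the generators of $R$) whose existence in the field becomes visible in the atomic diagram only after the relevant $\Pi^0_2$ event for $n$ has materialized.

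The main obstacle is keeping $R$ itself computable while the coding of $A$ is genuinely $\Pi^0_2$. The standard remedy is a true-stage construction in which any later infinitesimal adjustments to $\tau_n$ or $\sigma_n$ are grafted on via Hahn series with growing support, so that they do not disturb the finite portion of the atomic diagram already enumerated. Checking that $R$ is a computable real closed field reduces to the usual verification that one can effectively close a computably presented ordered field under square roots of positives and roots of odd-degree polynomials, compatibly with the staged coding.

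Once the construction is in place, the contradiction is immediate: if $S \subseteq R$ were a $\Sigma^0_2$ residue field section, then substituting the computable sequences $(\tau_n)$ and $(\sigma_n)$ into the $\Sigma^0_2$ definition of $S$ would place $\{n : \tau_n \in S\}$ and $\{n : \sigma_n \in S\}$ both in $\Sigma^0_2$; since $S$ contains exactly one of $\tau_n, \sigma_n$ for each $n$ by design, these two sets are $A$ and $\omega \setminus A$, giving $A \in \Delta^0_2$, a contradiction.
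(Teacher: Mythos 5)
The paper does not actually prove this proposition; it cites it as coming from the Knight--Lange preprint \cite{KL}, so there is no in-paper argument to compare against. Judged on its own, your proposal has the right overall shape for such a sharpness result (fix $A\in\Pi^0_2\setminus\Sigma^0_2$ and build a computable $R$ and a computable sequence of elements so that membership in any residue field section $m$-reduces $A$), but it has a genuine gap at the central step.

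The gap is in the claim that the construction can be ``engineered so that exactly one of $\tau_n,\sigma_n$ can extend to a full residue field section,'' and then in the further inference that every section $S$ actually contains one of $\tau_n,\sigma_n$. These are two different claims, and neither is justified. If $\overline{\tau_n}=\pi_n$ is transcendental over $\mathbb Q$ inside $k$, then for any nonzero infinitesimal $\epsilon\in R$ the element $\tau_n+\epsilon$ is also a lift of $\pi_n$ that is transcendental over any Archimedean subfield, and $RC(\mathbb Q(\tau_n+\epsilon))$ is an Archimedean real closed subfield that extends (by Zorn) to a residue field section. So sections have \emph{complete} freedom to choose the representative of $\pi_n$ among all infinitesimal perturbations of $\tau_n$, not just between $\tau_n$ and $\sigma_n$. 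Adjoining ``auxiliary algebraic witnesses'' cannot help: any polynomial relation over $\mathbb Q(\pi_m:m\neq n)$ that would single out one lift would make $\pi_n$ algebraic over those residues, collapsing the intended transcendence basis and, worse, turning ``$\tau_n$ is the forced representative'' into a $\Sigma^0_1$ condition on the atomic diagram of $R$, which is far too low in the hierarchy to give the desired conclusion. The one thing you \emph{can} force is $\tau_n\notin S$ for all $S$ (by making $\tau_n$ infinitesimally close to but distinct from a real algebraic number), but the symmetric direction $\tau_n\in S$ for all $S$ fails for every finite transcendental $\tau_n$ in a field with nontrivial valuation, so the many-one reduction $\{n:\tau_n\in S\}=A$ does not come out.

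A secondary, smaller concern: the $\Pi^0_2$ ``true-stage'' coding is described only loosely. For the reduction $n\mapsto\tau_n$ to be an $m$-reduction one needs $\tau_n$ to be a \emph{fixed} computable index in the presentation of $R$; if the element named $\tau_n$ is revised along the construction, you must explain why the sequence of indices converges and remains computable, which is not addressed. Because of the central gap above, however, fixing this would not by itself save the argument: some different mechanism (for instance, controlling the whole $\Pi^0_2$ class of sections, or working with the maximality condition directly rather than with designated coding elements) is needed to make the coding into residue field sections go through.
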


 We let $k(G)$ denote the field generated by $k\cup G$.


\begin{defn} [$k((G))$]\
Let $k$ be  an Archimedean ordered field and $G$ an ordered Abelian  group.
\begin{enumerate}

\item   The field
$k((G))$ of generalized series is the set of formal sums $s = \sum_{g\in G} a_g g$ with $a_g\in k$ and $\mbox{Supp}(s):=\{g\in G: s_g\not=0\}$ is an anti-wellordered subset of $G$.

\item  The \emph{length} of $s$ is the order type of $Supp(s)$ under the reverse ordering.  
Later, we may write $s = \sum_{i<\alpha} a_i g_i$, where $g_i\in G$ with $g_i > g_j$ for $i < j < \alpha$, and $a_i\in k^{\times}$.  Under this notation, the length of $s$ is $\alpha$.  


\item  For $s = \sum_{g\in S} a_g g$ and $t = \sum_{g\in T} b_g g$ in $k((G))$ where $Supp(s)\subset S$ and $Supp(t)\subset T$, the \emph{sum} $s+t$ and the \emph{product} $s\cdot t$ are defined as for ordinary power series.

\begin{enumerate}

\item  In $s+t$, the coefficient of $g$ is $a_g+b_g$.

\item  In $s\cdot t$, the coefficient of $g$ is the sum of the products
$a_{g'}b_{g''}$, where $g = g'\cdot g''$.

\end{enumerate}
%
\item $k((G))$ is ordered anti-lexicographically  by setting $s>0$ if $a_g>0$ where $g=:\mbox{max(Supp(s))}$.

\end{enumerate}

\end{defn}
 For a proof that $k((G))$ is a totally ordered field, see \cite[Chapter VIII, Theorem 10]{F}.
 If $k$ is real closed and $(G, \cdot)$ is an ordered divisible Abelian group, then  $k((G))$ is real closed by \cite[Theorem 4.3.7]{EP}.
 The field $k((G))$ carries a canonical valuation $v: k((G))^{\times}\rightarrow G$, defined by $s\mapsto \mbox{ max(supp}(s))$, with value group $G$.
Given a subset $X\subset G$, we set
\begin{equation*}
k((X))=\{s\in k((G))\mid Supp(s)\subset X\}.
\end{equation*}
We let $G^{\le 1}=\{g\in G\mid g\le 1\}$, and similarly define $G^{<1}$ and $G^{>1}$.
The valuation ring is the ring of finite elements $k((G^{\leq 1}))$, its valuation ideal is the ideal of infinitesimals $k((G^{< 1}))$, and  the  residue field is $k$. The canonical additive complement to the valuation ring is  $k((G^{> 1}))$, the ring of purely infinite series. The group of positive units of $k((G^{\leq 1}))$ is denoted by $\mathcal U_v^{>0}$, and consists of series $s$ in the valuation ring with  coefficient $a_g>0$ for $g=1$. In this setting the following decompositions of the additive and multiplicative groups of $k((G))$ will be useful
$$ (k((G)), +)=  k((G^{\leq 1}))\oplus k((G^{> 1})) \mbox{ and  } (k((G))^{>0}, \cdot )= \mathcal U_v^{>0}\cdot  G  .$$

\subsection{Truncation-closed embeddings}

\begin{defn} [Truncation-closed subfield]

Let $F$ be a subfield of $k((G))$.  We say that $F$ is truncation closed if whenever $s = \sum_{g\in G} a_g g \in F$ and $h\in G$, the restriction $s_{<h}=\sum_{g<h} a_g g$ also belongs to $F$.

\end{defn}

Mourgues and Ressayre \cite{MR} showed that every real closed field has an integer part in the following way.  In \cite[Lemma 3.2]{MR}, Mourgues and Ressayre observed the following.

\begin{prop} [Mourgues and Ressayre]

If $F$ is a truncation closed subfield of $k((G))$ and $Z_F$ consists of the elements of the form $t+z$, where $t\in F\cap k((G^{>1}))$ and $z\in\mathbb{Z}$, then $Z_F$ is an integer part for $F$.

\end{prop}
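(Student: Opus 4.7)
The plan is to verify the three defining properties of an integer part for $Z_F$: it is a subring of $F$, it is discretely ordered, and every $r \in F$ has a floor in $Z_F$. Closure of $Z_F$ under addition and the presence of $0, 1$ are immediate. For multiplication, expand $(t_1 + z_1)(t_2 + z_2) = t_1 t_2 + z_1 t_2 + z_2 t_1 + z_1 z_2$; the key observation is that $k((G^{>1}))$ is closed under addition and multiplication inside $k((G))$, because for $g', g'' \in G$ with $g', g'' > 1$ one has $g' g'' > 1$ in the ordered group, so the support of $t_1 t_2$ stays in $G^{>1}$. The integer multiples $z_i t_j$ likewise lie in $k((G^{>1}))$, so the series part $t_1 t_2 + z_1 t_2 + z_2 t_1$ lies in $F \cap k((G^{>1}))$ while $z_1 z_2 \in \mathbb{Z}$.

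For discreteness, I would show that no $t + z \in Z_F$ lies strictly between $0$ and $1$. The crucial observation is that any nonzero $t \in k((G^{>1}))$ is infinite in $F$, meaning $|t| > n$ for every $n \in \mathbb{N}$: indeed $v(t) = \max \mathrm{Supp}(t) > 1$ in the value group, which by the definition of the valuation says exactly that $|t|$ exceeds every natural number. Consequently, if $t \neq 0$ then $|t + z| \geq |t| - |z|$ is still infinite, precluding $0 < t + z < 1$; and if $t = 0$ then $z \in \mathbb{Z} \cap (0, 1) = \emptyset$ is impossible.

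For the floor property, given $r \in F$ I decompose it uniquely in $k((G))$ as $r = r^+ + c + \delta$, where $r^+ \in k((G^{>1}))$ collects the terms with $g > 1$, $c \in k$ is the coefficient at $g = 1$, and $\delta \in k((G^{<1}))$ is the infinitesimal tail. By truncation closedness applied at $h = 1$, $\delta = r_{<1} \in F$, so $r^+ + c = r - \delta \in F$; assuming $k \subseteq F$ (as is implicit in the Mourgues--Ressayre setting, where $F$ is the image of an embedding fixing a residue field section), this yields $r^+ \in F \cap k((G^{>1}))$. Since $k$ embeds in $\mathbb{R}$ and $\delta$ is infinitesimal, there is a unique $z \in \mathbb{Z}$ with $z \leq c + \delta < z + 1$, and then $r^+ + z \leq r < r^+ + z + 1$, so $r^+ + z \in Z_F$ is the required floor of $r$.

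I expect this last step to be the only subtle point: separating $r^+$ from the archimedean coefficient $c$ requires $c \in F$, which is exactly where the hypothesis $k \subseteq F$ is used. Once $r^+ \in F$ is secured, the existence of the integer $z$ is routine from the archimedean character of $k$ and the infinitesimality of $\delta$.
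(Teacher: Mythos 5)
Your proof is correct and follows the same route as the paper: decompose $r$ into its purely infinite part plus its finite part (using truncation closure at $h=1$), then take the integer floor of the finite part, the subring and discreteness checks being routine. Your observation that one needs $k\subseteq F$ to conclude $r^+\in F$ (since truncation at $1$ leaves the constant $c$ attached to $r^+$) is well-taken --- the paper's one-line proof silently uses the same fact, and it holds in the intended application because $F=\delta(R)$ with $\delta$ fixing the residue field section $k$.
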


\begin{proof}

If $s\in F$, we have $s = t + t'$, where $t\in k((G^{>1}))$ and $t'\in k((G^{\leq 1}))$.  Take $z\in\mathbb{Z}$ such that $z\leq t' < z+1$.  Then $t+z\leq s < t+z+1$.

\end{proof}

In \cite[Corollary 4.2]{MR}, they showed the following restatement of Theorem A.

\begin{thma'}[Mourgues and Ressayre]

Let $R$ be a real closed field with value group $G$ and residue
field $k$. Then there is an order (valuation) preserving isomorphism
$\delta$ from $R$ onto a truncation closed subfield $F$ of $k((G))$.
Thus $\delta^{-1}(Z_F)$ is an integer part for $R$.
\end{thma'}

We refer to $\delta$ as a ``development function'' $\delta$.

\subsection{Exponential integer parts}
In \cite{R}  Ressayre imposes a further condition on   $\delta$ which ensures that the truncation integer part is also closed under exponentiation. The following is a rephrasing of  Theorem B and of  \cite[Theorem 4]{R}.

\begin{thmb'} Let $(R,2^x)$ be an exponential real closed field. Fix a residue field section $k\subset R$. Then there is a value group section $G\subset R^{>0}$ and a truncation closed embedding $\delta:R \hookrightarrow k((G))$ fixing $k$ and $G$, and  such that
\begin{equation}
\label{crucial} \delta (\log(G)) = \delta(R)\cap k((G^{>1})).
\end{equation}
\end{thmb'}

We argue that if $\delta$ satisfies condition (\ref{crucial}) then the truncation integer part $\delta^{-1}(Z_F)$ is an exponential integer part of $R$. The exponential function $2^x$ defined on $R$ induces an exponential function on $F=\delta (R)$ by setting: $2^y=\delta (2^x)$ where $y=\delta (x)$ for $x\in R$.
So, it suffices to show the following lemma, which appears in \cite[Proposition 5.2]{BKK}.

\begin{lem}\label{L:expIP}
 $Z_F$ is an exponential integer part of $F$ with respect to the induced exponential function.
\end{lem}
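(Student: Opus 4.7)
The plan is to take an arbitrary positive $z\in Z_F$, write it in the form $z=t+n$ guaranteed by the definition of $Z_F$, with $t\in F\cap k((G^{>1}))$ purely infinite and $n\in \mathbb{Z}$, and then exhibit $2^z$ in $Z_F$ by computing $2^z=2^t\cdot 2^n$ and invoking condition~(\ref{crucial}) to recognize $2^t$ as an element of $G$.

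First, I would dispose of the trivial case $t=0$: then $z=n$ is a positive integer, so $2^z=2^n\in \mathbb{Z}\subset Z_F$. For the main case $t\neq 0$, I would first argue that $t>0$. Since $t$ is a nonzero element of $k((G^{>1}))$, its leading monomial $a_g g$ has $g\in G^{>1}$, so $|t|$ exceeds every element of $k$; in particular $|t|>|n|$, and hence the sign of $z=t+n$ matches that of $t$. So $z>0$ forces $t>0$.

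Now I would apply condition~(\ref{crucial}): since $t\in F\cap k((G^{>1}))=\delta(\log G)$, there is $h\in G$ with $t=\delta(\log h)$. Under the induced exponential on $F$ given by $2^{\delta(x)}=\delta(2^x)$, and using that $\delta$ fixes $G$, this reads $2^t=h$. Thus
\[
2^z=2^t\cdot 2^n=h\cdot 2^n,
\]
and $t>0$ forces $h>1$, i.e.\ $h\in G^{>1}$. Viewed inside $k((G))$, the element $h\cdot 2^n$ is the single-term series with support $\{h\}\subset G^{>1}$ and coefficient $2^n\in k$, so it lies in $k((G^{>1}))$; and it lies in $F$ because $F$ contains both $G$ and $k$. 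Hence $2^z\in F\cap k((G^{>1}))\subset Z_F$.

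The step I expect to require the most care is the recognition that~(\ref{crucial}), phrased in terms of $\delta$ and $R$, translates cleanly into the statement that every purely infinite element of $F$ is the induced logarithm of a value group element; this is what lets us exponentiate the infinite part of $z$ back into a single monomial in $G$. Once that identification is unpacked, the rest is a transparent support computation in $k((G))$ and no genuine obstacle arises.
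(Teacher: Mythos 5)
Your proof is correct and follows essentially the same route as the paper's: decompose $z$ into its purely infinite part plus an integer, dispose of the case where the infinite part vanishes, use condition~(\ref{crucial}) to identify $2^t$ with an element of $G^{>1}$, and conclude that $2^z$ lands in $F\cap k((G^{>1}))\subset Z_F$. The only difference is that you spell out why $t>0$ (via the leading monomial argument) and why $h\cdot 2^n$ is a single-term series in $k((G^{>1}))$, details the paper leaves implicit.
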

\begin{proof}

Let $z\in Z_F$ and $z>0$, then $z= a+ y$ where $y\in   F\cap k((G^{>1}))$ and $a\in \mathbb Z$. We compute $2^z=2^a2^y$.  If $y=0$ then $a>0$, so $2^z\in \mathbb N\subset Z_F$. If $y\not= 0$ then $y>0$, and $2^y>1$. We now show that $2^y\in G$. By (\ref{crucial}) $y=\delta (\log (g))$ for some $g\in G$. Then $2^y=\delta (2^{\log g})=\delta (g)=g$, as required. Therefore, $2^y\in G^{>1}$, and so $2^z=2^a2^y$ belongs to  $k((G^{>1}))$, and also to $F=\delta (R) $. So, $2^z\in F\cap k((G^{>1}))\subset Z_F$.

\end{proof}

\section{Development Triples}\label{S:MRconst}

Mourgues and Ressayre prove Theorem A$'$ by showing how to extend a partial embedding $\phi$ from a subfield $A$ of $R$ onto a
truncation closed subfield $F$ of $k((G))$ to be defined to some $r\in R-A$ while preserving truncation closure.
\begin{defn} [Development triple]

Suppose $R$ is a real closed field, with residue field $k$.  We say that $(A,H,\phi)$ is a {\em development triple with respect to  $R$ and $k$} if

\begin{enumerate}

\item $A$ is a real closed subfield of $R$ containing $k$,

\item  $H\subset A$ is a value group section for $A$, and

\item  $\phi$ is a order preserving  isomorphism from $A$ onto a truncation closed subfield of $\ek{H}$ such that $\phi\upharpoonright k(H)$ is the identity.

\end{enumerate}

\end{defn}

\noindent
\textbf{Notation}.  We write $(A', H', \phi')\supseteq(A,H, \phi)$, if $A'\supseteq A$, $H'\supseteq H$, and
$\phi'\supseteq \phi$.









Given a development triple $(A, H, \phi)$ and an element $r\in R-A$, we want to obtain a development triple $(A', H', \phi')\supset (A, H, \phi)$ with $r\in A'$.  We use the following definitions to describe $\phi'(r)$.

\begin{defn} Let $\alpha$ be an ordinal.
The {\em development of $r\in R$ over $(A, H, \phi)$ of length $\alpha$} is an element $t_\alpha\in\ek{H}$  satisfying:

\begin{itemize}
\item $t_0= 0$ if $\alpha=0$, and otherwise,
\item $t_\alpha={\sum_{i<\alpha}a_i{g_i}}$ where
\begin{equation} (\forall \beta<\alpha)(\exists \hat{r}_\beta\in A)[ \phi(\hat{r}_\beta)={\sum_{i<\beta}a_i{g_i}}\ \&\  {g_\beta}=v(r- \hat{r}_\beta)\in G ]
\end{equation}
\end{itemize}
\end{defn}

It is straightforward to prove the next lemma.

\begin{lem}\label{L:devuniq} Let $(A, H, \phi)$ be a development triple, $r\in R$, and $\alpha$ an ordinal for which $t_\gamma$ exists.  Then,
\begin{enumerate}
\item  $t_\gamma$ is unique and, for all $\beta\le \gamma$, $t_\beta=(t_\gamma)_{<\beta}$.

\item There is a development $t_\alpha$ of $r$ over $(A, H, \phi)$ of maximum length $\alpha$.
\end{enumerate}
\end{lem}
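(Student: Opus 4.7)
The plan is to prove both parts by transfinite induction on the ordinal. For (1), the base case $t_0 = 0$ is given. At a successor $\gamma = \beta+1$, the inductive hypothesis provides a unique $t_\beta$; since $\phi$ is injective, the preimage $\hat{r}_\beta \in A$ of $t_\beta$ is unique, and hence $g_\beta = v(r - \hat{r}_\beta)$ is determined. To pin down $a_\beta$, I would apply $\phi$ to $\hat{r}_{\beta+1} - \hat{r}_\beta$: its image $a_\beta g_\beta$ is a monomial, and valuation-preservation of $\phi$ together with $\phi|_k = \mathrm{id}$ yield $v(\hat{r}_{\beta+1} - \hat{r}_\beta) = g_\beta$ with leading coefficient $a_\beta$. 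Because the support of $t_\gamma$ is strictly decreasing, $g_{\beta+1} < g_\beta$, so the leading term of $(r - \hat{r}_\beta) - (\hat{r}_{\beta+1} - \hat{r}_\beta)$ must cancel; this forces $a_\beta$ to equal the residue of $(r - \hat{r}_\beta)\, g_\beta^{-1}$ in $k$, which is unique. At a limit $\gamma$, each $a_i g_i$ with $i < \gamma$ is unique by the inductive hypothesis, and these terms assemble into the unique $t_\gamma$. The identity $t_\beta = (t_\gamma)_{<\beta}$ follows by comparing the defining sums.

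For (2), set $\Lambda = \{\alpha : t_\alpha \text{ exists}\}$. By (1), $\Lambda$ is an initial segment of ordinals. I would verify that $\Lambda$ is closed under limits: for a limit $\mu \subseteq \Lambda$, the coherent coefficients $(a_i, g_i)_{i < \mu}$ yield a well-defined $t_\mu \in k((H))$ whose support is anti-well-ordered of type $\mu$, and the defining condition for $t_\mu$ at each $\beta < \mu$ is exactly the one already met for $t_{\beta+1}$. Boundedness of $\Lambda$ comes from injectivity of $\beta \mapsto \hat{r}_\beta$ (distinct $t_\beta$'s have distinct $\phi$-preimages), giving $\Lambda \subseteq |A|^+$. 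A downward closed, limit-closed, bounded class of ordinals has a maximum, which is the desired $\alpha$.

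The main obstacle is the successor step of (1): the defining condition directly constrains only the valuations $g_\beta$, and one must exploit the decreasing-support convention together with $\phi$ being valuation-preserving and fixing $k$ to force $a_\beta$ to equal a specific residue. Once that rigidity is established, uniqueness propagates through both successor and limit stages, and the maximum length is simply the top of the bounded, downward closed, limit-closed class $\Lambda$, so the proof reduces to routine ordinal bookkeeping.
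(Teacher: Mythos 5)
The paper declares this lemma ``straightforward'' and omits the proof, so there is no paper argument to compare against; your overall plan (induction for uniqueness, then show the set of lengths is a bounded, downward-closed, limit-closed initial segment) is certainly the natural one and part (2) is carried out correctly.

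The problem is in the successor step of (1). To pin down $a_\beta$ you apply $\phi$ to $\hat{r}_{\beta+1}-\hat{r}_\beta$ and invoke $g_{\beta+1}<g_\beta$. But $\hat{r}_{\beta+1}$ is the witness required for the index $\beta+1$, and the definition of a development of length $\gamma$ only posits such a witness for indices strictly below $\gamma$. At the stage $\gamma=\beta+1$ you are trying to prove, the index $\beta$ is the last one and no $\hat{r}_{\beta+1}$ need exist. So the only constraint the definition literally imposes is $g_\beta=v(r-\hat{r}_\beta)$; nothing in the defining clause visibly ties $a_\beta$ to $r$, and the decreasing-support requirement is vacuous at the top coefficient. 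As written, your argument determines $a_\beta$ only for $\beta+1<\gamma$, and the uniqueness claim for a successor $\gamma$ is left open.

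What is really happening is that the definition of a development implicitly intends $a_\beta$ to be the residue $\overline{(r-\hat r_\beta)\,g_\beta^{-1}}\in k$, and that should be made explicit before running the induction rather than recovered a posteriori from the nonexistent witness $\hat{r}_{\beta+1}$. Once you add that clause (equivalently: demand $v\bigl(r-\hat r_\beta-a_\beta g_\beta\bigr)<v(r-\hat r_\beta)$ for every $\beta<\gamma$), the successor step collapses to the observation that $g_\beta$ is determined by the unique $\hat r_\beta=\phi^{-1}(t_\beta)$ and $a_\beta$ is then the residue, and your limit and maximum-length arguments go through unchanged. So the gap is not in the strategy but in trying to squeeze $a_\beta$'s uniqueness out of a clause that, for the top index, does not mention $a_\beta$ at all.
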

Lemma \ref{L:devuniq}  allows us to make the following definition.

\begin{defn}
The {\em maximum development of $r$ over $(A, H, \phi)$} is the unique development of $r$ over $(A, H, \phi)$ of maximum length $\alpha$.
\end{defn}

\begin{obs}\label{O:GammainH} Let $(A, H, \phi)$ be a development triple.  Let $t_\alpha$ be the maximum development of $r\in R-A$ over $(A, H, \phi)$.
 Let \mbox{$\Gamma(r)=\{v(x-r)\mid x\in A\}$.}  The following statements are equivalent:
(i) $\Gamma(r)\subset H$,
(ii) $\Gamma(r)$ has no least element,  and
(iii) $t_\alpha\not\in\phi(A)$.

Also, if  $t_\alpha\not\in\phi(A)$, then $\alpha$ is a limit ordinal or 0.
  \end{obs}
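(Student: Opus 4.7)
The plan is to establish the cycle (i)$\Rightarrow$(ii)$\Rightarrow$(iii)$\Rightarrow$(i), plus the final clause about $\alpha$. Throughout I identify $H$ with its image $v(H) = v(A^{\times})$ in $G$ via the value group section, so that ``$\Gamma(r)\subset H$'' means $\Gamma(r)\subset v(H)$.

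The ``limit or $0$'' clause is the easy part, handled by contraposition: if $\alpha = \beta+1$ is a successor, then $\hat r_\beta\in A$ exists with $\phi(\hat r_\beta) = t_\beta$, and $g_\beta\in v(H)$ (for the coefficient $a_\beta$ to appear in the development, $g_\beta$ must be realized by some $h_\beta\in H$). The element $\hat r_{\beta+1}:=\hat r_\beta + a_\beta h_\beta\in A$ then satisfies $\phi(\hat r_{\beta+1}) = t_\beta + a_\beta g_\beta = t_\alpha$, so $t_\alpha\in\phi(A)$, contradicting (iii).

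For (i)$\Rightarrow$(ii) I use the standard residue trick. Contrapositively, suppose $g_0 = v(r-x_0)$ is the least element of $\Gamma(r)$ with $x_0\in A$ and that (i) holds. Pick $h_0\in H$ with $v(h_0)=g_0$; then $u:=(r-x_0)/h_0$ is a unit of $\mathcal O_v$, its residue $\bar u$ lies in $k\subset A$, and $x_0+\bar u h_0\in A$ gives $v(r-(x_0+\bar u h_0)) = v(h_0)\cdot v(u-\bar u) < g_0$, contradicting minimality. For (ii)$\Rightarrow$(iii), again contrapositively: if $t_\alpha=\phi(a)$ with $a\in A$, maximality forces $v(r-a)\notin v(H)$ (else the construction above extends the development to length $\alpha+1$). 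For any $x\in A$ we then have $v(a-x)\in v(H)$, hence $v(a-x)\neq v(r-a)$, and the ultrametric law yields $v(r-x) = \max(v(a-x), v(r-a)) \geq v(r-a)$, showing $v(r-a)$ is the minimum of $\Gamma(r)$.

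The principal obstacle is (iii)$\Rightarrow$(i). Fix $x\in A$. If some $\beta<\alpha$ has $v(\hat r_\beta - x)\neq g_\beta$, then $v(r-x) = \max(v(\hat r_\beta - x), g_\beta)$ is the larger of two elements of $v(H)$, hence in $v(H)$. The hard case is $v(\hat r_\beta - x) = g_\beta$ for every $\beta<\alpha$. Expanding $t_\beta - \phi(x) = \phi(\hat r_\beta - x)$ and comparing leading terms stage by stage forces the coefficient of $g_\beta$ in $\phi(x)$ to be $a_\beta$ and rules out any support of $\phi(x)$ strictly between consecutive $g_\beta$'s. Since $\alpha$ is a limit (by the final clause), one deduces $\phi(x) = t_\alpha + s$ with $\mathrm{supp}(s)$ strictly below every $g_\beta$. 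If $s=0$ then $t_\alpha = \phi(x)\in\phi(A)$, contradicting (iii). Otherwise set $g':=\max\mathrm{supp}(s)\in H$; truncation closure of $F=\phi(A)$ gives $\phi(x)_{<g'}\in F$, and since the single term $cg'$ at $g'$ lies in $k\cdot H\subset F$, one gets $\phi(x)_{\leq g'}\in F$, hence $\phi(x)_{>g'}=t_\alpha\in F$, again contradicting (iii). The delicate point is that $g'$ need not coincide with any $g_\beta$; what saves us is that $g'\in H$, so truncation at level $g'$ is permitted --- this is exactly what truncation closure of $F$ is for.
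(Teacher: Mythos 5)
The paper explicitly omits the proof of this observation (``Since these observations will not be used in the remainder of the paper, we omit the proof''), so there is no argument in the source to compare against; I will just assess yours. Your proof is correct. The cycle (i)$\Rightarrow$(ii)$\Rightarrow$(iii)$\Rightarrow$(i) together with the separate contrapositive for the ``limit or $0$'' clause is a clean decomposition, and you correctly identify the only genuinely delicate step, namely ruling out the situation in (iii)$\Rightarrow$(i) where $v(\hat r_\beta - x)=g_\beta$ for every $\beta<\alpha$. Your handling of it is right: from $\phi(x)_{>g_\beta}=t_\beta$ for all $\beta$, limitness of $\alpha$ forces $\phi(x)=t_\alpha+s$ with $\mathrm{supp}(s)$ strictly below all $g_\beta$, and then, because $g'=\max\mathrm{supp}(s)$ lies in $H$ (not merely in $G$), truncation closure of $F=\phi(A)$ together with $cg'\in k(H)\subset F$ lets you reconstruct $t_\alpha=\phi(x)_{>g'}$ inside $F$, contradicting (iii). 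Two minor polish points, neither affecting correctness: (a) in the ultrametric-max steps you should explicitly dispose of the degenerate cases $\hat r_\beta=x$ (in Case~1) and $a=x$ (in (ii)$\Rightarrow$(iii)), where the relevant value is $v(0)$; both slot in trivially. (b) In the ``no support of $\phi(x)$ strictly between consecutive $g_\beta$'s'' claim, you should also say a word about limit stages $\gamma<\alpha$, where $\phi(x)_{>g_\gamma}=t_\gamma$ is used to exclude support strictly between $g_\gamma$ and the cofinal sequence $g_\beta$, $\beta<\gamma$; your phrasing only names consecutive pairs.
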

  \noindent
  Since these observations will not be used in the remainder of the paper, we omit the proof.
We now restate the key lemma of Theorem A by Mourgues and Ressayre \cite{MR} in the language of development triples.

\begin{thm}[Mourgues-Ressayre]\label{T:MRext}
Suppose $(A, H, \phi)$ is a development triple with respect to a real closed field $R$ and $r\in R-A$. There is a development triple $(A', H', \phi')$ extending  $(A, H, \phi)$ such that   $r\in A'$.  Moreover, if the maximum  development of $r$ over $(A, H, \phi)$ is  $t_\alpha\in\ek{H}$, then \mbox{$\phi'(r)_{<\alpha}=t_\alpha$.}
\end{thm}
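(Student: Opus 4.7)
The plan is to split into cases on whether the maximum development $t_\alpha$ lies in $\phi(A)$ and in each case reduce to a well-understood extension of a valuation-preserving isomorphism. The key initial remark is that, because $A$ is real closed and $r \in R \setminus A$, the element $r$ is transcendental over $A$; so $A(r)$ is a purely transcendental extension, and its real closure inside $R$ will serve as $A'$.

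First, in the case $t_\alpha \notin \phi(A)$ (equivalently by Observation \ref{O:GammainH}, $\Gamma(r)$ has no least element), the family $\{\hat{r}_\beta : \beta < \alpha\}$ is a pseudo-Cauchy sequence in $A$ with pseudo-limit $r$ and with no pseudo-limit in $A$, and $\{t_\beta\}$ is a pseudo-Cauchy sequence in $\phi(A)$ with pseudo-limit $t_\alpha \in \ek{H}$. By Kaplansky's theorem on transcendental pseudo-Cauchy extensions, the assignment $r \mapsto t_\alpha$ extends $\phi$ uniquely to a valuation- and order-preserving isomorphism $\phi_1 : A(r) \to \phi(A)(t_\alpha)$; then uniqueness of the real closure of an ordered field extends $\phi_1$ to an embedding $\phi' : A' \to \kG$ with $A' := \mathrm{rc}_R(A(r))$ and $H' := H$, giving $\phi'(r) = t_\alpha$. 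Second, in the case $t_\alpha \in \phi(A)$, write $t_\alpha = \phi(\hat{r}_\alpha)$ and put $g = v(r - \hat{r}_\alpha)$, the least element of $\Gamma(r)$. Maximality of the development forces $g \notin v(A^\times)$: otherwise, for the unique $h \in H$ with $v(h) = g$, the element $(r - \hat{r}_\alpha)/h$ would be finite with a nonzero residue $a_\alpha \in k$, and $(g, a_\alpha)$ would refine $t_\alpha$ to length $\alpha + 1$. Setting $t' = |r - \hat{r}_\alpha|$ and $H^* = \divhull{H \cup \{t'\}}$ inside $R^{>0}$ enlarges the value group section; one extends $\phi$ so that $t'$ maps to the corresponding monomial in $k((H^*))$, and relative to the enlarged triple the maximum development of $r$ is strictly longer. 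Iterating this enlargement, possibly transfinitely, must terminate because the support of any eventual image of $r$ in $\kG$ is a fixed ordinal bounded by $|G|$; at termination we are in the Case 1 situation, which finishes the construction. By construction the first $\alpha$ terms of $\phi'(r)$ agree with $t_\alpha$, giving the ``moreover'' clause.

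The principal technical obstacle I expect is verifying that the resulting $\phi'(A')$ remains truncation closed inside $\kG$. The pseudo-Cauchy extension yields the valuation- and order-preserving field isomorphism for free, but truncation closure is a strong additional property: for every $s \in A(r)$ and every $h \in G$, one must exhibit $\phi'(s)_{<h}$ as $\phi'(a)$ for some $a \in A'$. This requires analyzing how truncation interacts with sums, products, and inverses of series in $\ek{H}$—the relevant truncations can be rewritten in terms of truncations of the factors, which lie in the already truncation-closed set $\phi(A) \cup \{t_\beta : \beta \le \alpha\}$—and then tracking truncation closure through the real closure step, using that Newton's method in $\kG$ computes algebraic elements as convergent series whose initial segments are determined by polynomial data over the truncation-closed base.
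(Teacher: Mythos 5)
Your two-case split by whether $t_\alpha\in\phi(A)$ mirrors the paper's (immediate transcendental vs.\ value transcendental), and your Case~1 route via Kaplansky's pseudo-Cauchy theorem is a legitimate alternative to the paper's invocation of Theorem~\ref{T:cutsextend} (a cut-transfer argument for real closed fields): pseudo-Cauchy sequences and cuts carry equivalent information here, and either yields the order- and valuation-preserving isomorphism $\phi'$ with $\phi'(r)=t_\alpha$. The paper then cites Lemma~\ref{Development} for truncation closure of $RC(\phi(A)(t_\alpha))$, which is exactly the ``principal technical obstacle'' you flag without resolving; that lemma is the real content of the step and cannot simply be deferred.

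Your Case~2, however, contains a genuine error. You propose enlarging $H$ to $H^*=\divhull{H\cup\{t'\}}$ with $t'=|r-\hat r_\alpha|$, sending $t'$ to the corresponding monomial, and then iterating transfinitely until the immediate transcendental case is reached, with a termination argument via the cardinality of $G$. No iteration is needed or meaningful: once $t'$ is put into the value group section and $\phi$ is extended over $RC(A(t'))$ with $\phi'(t')$ the monomial, you already have $r=\hat r_\alpha+\epsilon t'\in RC(A(t'))$ and hence $\phi'(r)=t_\alpha+\epsilon t'$, a series of length $\alpha+1$ in $\ek{H^*}$. The element $r$ is captured in one extension step, so there is no ``new maximum development of $r$'' to compute over the enlarged triple — $r$ is simply in $A'$ and the theorem is proved. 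The termination argument you sketch (ordinal bounded by $|G|$) does not address a real obstruction because the process you are trying to bound does not actually recur; it reflects a misreading of how the value transcendental case closes off. The paper's Lemma~\ref{P:cut} and Observation~\ref{O:immedtrans} make this explicit: the value transcendental step produces exactly one new group generator $g=|r-\hat r_\alpha|$ and puts $\phi'(r)=t_\alpha+\epsilon g$, after which the development lemma again gives truncation closure of $RC(\phi(A)\cup\{t_\alpha+\epsilon g\})$.

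So: Case~1 is fine modulo the deferred truncation-closure lemma; Case~2 needs to be replaced by the single-step extension $H'=\divhull{H\cup\{g\}}$, $\phi'(r)=t_\alpha+\epsilon g$, rather than an iteration; and the truncation-closure claim must actually be proved (the paper uses Lemma~\ref{Development} of Mourgues--Ressayre) rather than noted as an expected difficulty.
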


We present Mourgues and Ressayre's construction in the framework of development triples, because we will need development triples with additional properties to examine the complexity of the exponential case, which is our main goal.

We use the following theorem to construct the development triple $(A', H', \phi')$ extending $(A, H, \phi)$.
\begin{thm}\label{T:cutsextend}  Let $A\subset A'$ and $B\subset B'$ be real closed fields such that there is an order preserving isomorphism  $\phi$ from $A$ onto $B$.  If we have $a\in A'-A$ and $b\in B'-B$ such that
\begin{equation}
(\forall x\in A)[x<a \iff \phi(x)<b],
\end{equation}
then there is a unique order preserving isomorphism $\phi'\supset\phi$ from \mbox{$RC(A\cup\{a\})$} onto $RC(B\cup\{b\})$ with $\phi'(a)=b.$
\end{thm}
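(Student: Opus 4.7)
The plan is to observe first that since $A$ is real closed and $a \in A' \setminus A$, the element $a$ must be transcendental over $A$: if $a$ were algebraic over $A$, it would be a root of some $p(x) \in A[x]$, but over a real closed field every polynomial factors into linear and (root-free) quadratic factors, so the real roots of $p$ in any ordered extension already lie in $A$. By the same reasoning $b$ is transcendental over $B$. Consequently $\phi$ extends unambiguously to a field isomorphism $\tilde{\phi} \colon A(a) \to B(b)$ by sending $a \mapsto b$.

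The heart of the argument is verifying that $\tilde{\phi}$ preserves order. It suffices to check that for every nonzero $p(x) \in A[x]$, the signs of $p(a)$ and $\phi(p)(b)$ agree. Using real-closedness of $A$, I would factor $p(x)$ over $A$ as a constant times linear factors $(x - \alpha_i)$ with $\alpha_i \in A$ together with irreducible quadratic factors $q_j(x)$. Each $q_j$ has negative discriminant, hence is of constant sign on $A'$; its image $\phi(q_j)$ also has negative discriminant (since $\phi$ is an order isomorphism of ordered fields, it preserves both sums of squares and signs of elements of $A$), so $\phi(q_j)$ is of constant sign on $B'$, and these two constant signs match because $\phi$ carries the leading coefficients to each other. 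For each linear factor, the cut hypothesis gives $\operatorname{sgn}(a - \alpha_i) = \operatorname{sgn}(b - \phi(\alpha_i))$. Multiplying the contributions, $\operatorname{sgn}(p(a)) = \operatorname{sgn}(\phi(p)(b))$, and the same reasoning extends to quotients of polynomials in $a$.

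Once $\tilde{\phi} \colon A(a) \to B(b)$ is shown to be an order-preserving isomorphism of ordered fields, I would appeal to the standard fact that an ordered field admits a real closure unique up to a unique order-preserving isomorphism fixing the base. Applied to the inclusions $A(a) \subset RC(A \cup \{a\})$ and $B(b) \subset RC(B \cup \{b\})$, this extends $\tilde{\phi}$ in exactly one way to an order-preserving isomorphism $\phi' \colon RC(A \cup \{a\}) \to RC(B \cup \{b\})$, giving both the existence and the uniqueness claim.

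The only real obstacle is the sign-preservation computation, which must combine real-closedness of $A$ (to get the linear/quadratic factorization) with the cut hypothesis (to transfer the signs of the linear factors); once that is in place, the passage to the real closures is automatic from the uniqueness theorem for real closures. One mild point of care is to note that the algebraic case never arises precisely because $A$ and $B$ are already real closed, so the whole argument reduces cleanly to the transcendental extension.
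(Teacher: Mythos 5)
Your proof is correct. The paper states Theorem~\ref{T:cutsextend} without proof, treating it as a standard fact about real closed fields (it is essentially the back-and-forth step in the classical quantifier-elimination/model-completeness argument for RCF), so there is no in-paper argument to compare against; your write-up is the canonical one. The three ingredients you identify are exactly the right ones: (i) a real closed field is relatively algebraically closed in any ordered extension (so $a$ and $b$ are transcendental, and the algebraic case genuinely cannot occur); (ii) sign-preservation for the induced map $A(a)\to B(b)$, obtained by factoring a polynomial over the real closed field $A$ into a leading constant, linear factors (handled by the cut hypothesis), and negative-discriminant quadratics (handled by completing the square, since a monic quadratic with negative discriminant is a positive element plus a square, and $\phi$ preserves positivity); and (iii) uniqueness of the real closure of an ordered field, which both produces $\phi'$ and forces it to be unique among extensions sending $a\mapsto b$. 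One small point worth making explicit in a final version: after factoring, normalize the quadratics to be monic so that their constant sign is positive on both sides and the leading-coefficient bookkeeping lives entirely in the scalar factor~$c$; your phrasing about ``carrying the leading coefficients to each other'' is correct but slightly less clean than this normalization.
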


We have two cases: the  immediate transcendental case where $t\not\in\phi(A)$ and the value transcendental case where $t\in\phi(A)$, as seen in \cite{Kap}.   Note there is no residue transcendental case because $k$ is a residue field section for $R$, not just $A$.
In both cases, we choose $\phi'(r)$ so that the cut of $r$ over $A$ is the same as the cut of $\phi'(r)$ over $\phi(A)$.

\begin{lem} \label{P:cut}
Let $(A, H, \phi)$ be a development triple with respect to $R$ and let $r\in R-A$.  Suppose $r$ has  development $t=t_\alpha$ over $(A, H, \phi)$.  If  $t\not\in\phi(A)$, then  for all $x\in A$,
\begin{equation}
x< r \text{ (in $R$)} \iff \phi(x)< t \text{ (in  $\ek{H}$)}
\end{equation}
Similarly, if there is some $r'\in A$ such that $\phi(r')=t$, then for all $x\in A$,
\begin{equation}
x< r \text{ (in $R$)} \iff \phi(x)< t+\epsilon g \text{ (in  $\ek{H'}$)}
\end{equation}
where  $g=|r-r'|$ and $\epsilon=\pm 1$ such that $g=|r-r'|=\epsilon (r-r')$ and $H'=\divhull{H\cup\{g\}}$.
\end{lem}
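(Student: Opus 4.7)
The plan is to handle the two cases separately, in each case using the approximating sequences $\hat{r}_\beta$ in $A$ and $t_\beta=\phi(\hat{r}_\beta)$ in $\ek{H}$ to transfer comparisons via the order-preservation of $\phi$.

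For the first case ($t\not\in\phi(A)$), fix $x\in A$ and first try to find $\beta<\alpha$ with $g_\beta<v(r-x)$. In the decomposition $x-\hat{r}_\beta=(x-r)+(r-\hat{r}_\beta)$, the summand $x-r$ then dominates in valuation, so $v(x-\hat{r}_\beta)=v(x-r)$ and both summands share the sign of $x-r$; order-preservation of $\phi$ then gives $x<r\iff x<\hat{r}_\beta\iff \phi(x)<t_\beta$. A mirror computation in $\ek{H}$ starts from $\phi(x)-t_\beta=\phi(x-\hat{r}_\beta)$, whose valuation is $v(x-r)>g_\beta=v(t-t_\beta)$; so in $\phi(x)-t=(\phi(x)-t_\beta)-(t-t_\beta)$ the first summand dominates, giving $v(\phi(x)-t)=v(x-r)$ with the same sign, and hence $\phi(x)<t_\beta\iff\phi(x)<t$. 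Chaining the two equivalences finishes this sub-case.

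The delicate situation in Case 1 is when no such $\beta$ exists, i.e.\ $v(r-x)<g_\beta$ for every $\beta<\alpha$. A direct computation then shows $\phi(x)$ must agree with $t$ at every $g_\beta$: the leading coefficient of $\phi(x-\hat{r}_\beta)$ at $g_\beta$ is $\overline{(x-\hat{r}_\beta)/g_\beta}$, which coincides with $a_\beta=\overline{(r-\hat{r}_\beta)/g_\beta}$ because $v(x-r)<g_\beta$. Thus $\phi(x)=t+s$ for some nonzero $s\in\ek{H}$ supported strictly below $\{g_\beta\}$, and the goal is to rule this out via the truncation-closedness of $\phi(A)$: if some $h\in H$ satisfies $\max\mathrm{supp}(s)<h\leq g_\beta$ for all $\beta$, then $\phi(x)_{<h}=s$ lies in $\phi(A)$, whence $t=\phi(x)-s\in\phi(A)$, contradicting the hypothesis. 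Excluding the remaining boundary case, in which no separating $h$ exists in $H$, is the main technical hurdle and should follow from the maximality of $t_\alpha$ combined with the ``no least element'' conclusion of Observation~\ref{O:GammainH}.

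For the second case, where $\phi(r')=t$ for some $r'\in A$, the maximality of $t$ forces $g=v(r-r')\notin H$, since otherwise $r'$ would extend the development one step further. Decompose $r-x=\epsilon g+(r'-x)$ and $\phi(x)-(t+\epsilon g)=\phi(x-r')-\epsilon g$. When $x=r'$ these reduce to $\pm\epsilon g$ with opposite signs, so both inequalities $x<r$ and $\phi(x)<t+\epsilon g$ hold precisely when $\epsilon=1$. When $x\neq r'$, $v(x-r')\in H$ differs from $g\notin H$, so exactly one term dominates on each side; splitting into $g>v(x-r')$ and $g<v(x-r')$ and reading off the sign of the dominating term verifies that $x<r$ and $\phi(x)<t+\epsilon g$ always agree. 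Overall, the main obstacle is the tail sub-case of Case 1 above; every remaining step reduces to routine valuation bookkeeping.
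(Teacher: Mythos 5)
Your strategy is sound and matches the valuation-theoretic style one would use here (the paper itself omits this proof, citing van den Dries--Macintyre--Marker and Kuhlmann--Kuhlmann--Marshall--Zekavat, and hints at a cofinality route via Observation~\ref{L:cofinal}; your chaining through a well-chosen $\hat r_\beta$ is essentially that same idea in computational form). Case~2 is complete and correct. In Case~1, the first sub-case is correct.

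The place you flag as ``the main technical hurdle'' is, however, already within reach of the tools you invoked, and you should not leave it hanging. Suppose no $\beta<\alpha$ has $g_\beta<v(r-x)$. Since $t\notin\phi(A)$ forces $\alpha$ to be a nonzero limit (Observation~\ref{O:GammainH}), the $g_\beta$ are strictly decreasing and so in fact $g_\beta>v(r-x)$ for every $\beta<\alpha$. Your coefficient computation then shows that $s:=\phi(x)-t$ is a nonzero series whose entire support lies strictly below every $g_\beta$. Now let $h:=\max\operatorname{supp}(s)=v(s)\in H$. You do not need an $h$ \emph{strictly} between $v(s)$ and the $g_\beta$'s: truncate $\phi(x)$ at $h$ itself. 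Since $\operatorname{supp}(t)=\{g_\beta\}$ lies entirely above $h$, we get $\phi(x)_{<h}=s_{<h}\in\phi(A)$ by truncation closure, while the remaining monomial $s_h\,h$ (with $s_h\in k^\times$) lies in $k(H)$, which is contained in $\phi(A)$ because $\phi\!\restriction\! k(H)$ is the identity. Adding these gives $s\in\phi(A)$, hence $t=\phi(x)-s\in\phi(A)$, contradicting the hypothesis. So this sub-case is vacuous, and the argument of your first sub-case always applies. With that patch, the proof is complete; maximality of $t_\alpha$ and the ``no least element'' observation are not actually needed beyond knowing $\alpha$ is a limit.

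One more minor point: when you write ``$g=v(r-r')\notin H$'' in Case~2, be careful to distinguish the element $g=|r-r'|\in R^{>0}$ from its Archimedean class $v(g)$; what maximality gives you is $v(g)\notin v(H)$, and it is this that makes the comparison $v(g)\ne v(x-r')$ and the well-definedness of $t+\epsilon g$ in $\ek{H'}$ go through.
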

The proof of Lemma \ref{P:cut} can be found in  Lemma 3.3, p.\ 191 of \cite{VMM} (see also  Theorem 6.2, p.\ 85 of \cite{KKMZ}).
%
Lemma \ref{P:cut} can also be proved using Observation $\ref{L:cofinal}$, whose proof we omit.
\begin{obs}\label{L:cofinal}
For  an ordered set $B$,  $b\in B$, and $C\subset B$, let $C_{<b}=\{c\in C\mid c<b\}$ and let $C_{>b}=\{c\in C\mid c>b\}$.

Let $(A, H, \phi)$ be a development triple with respect to $R$ and let $r\in R-A$.  Suppose $r$ has  development $t=t_\alpha$ over $(A, H, \phi)$.
  Let $\hat{A}=\{\hat{r}_\beta\mid \beta<\gamma\}$.  Let $t'=t$ if $t\not\in\phi(A)$ and let $t'= t+\epsilon g $ as in Lemma \ref{P:cut} if $t\in\phi(A)$.
At least one of the following two statements holds.
\begin{enumerate}
\item The set $\hat{A}_{<r}$
is cofinal in $A_{<r}$, and
$\phi(\hat{A}_{<r})$
is cofinal in $\phi(A)_{<t'}$.

 \item The set $\hat{A}_{>r}$
  is coinitial in $A_{>r}$, and
$\phi(\hat{A}_{>r})$
 is coinitial in $\phi(A)_{>t'}$.
\end{enumerate}
\end{obs}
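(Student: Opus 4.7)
The plan is to partition the index set $\alpha$ by the sign of the coefficients $a_\beta$ and invoke a pigeon-hole on cofinality. First I will record the ``side lemma'': $\hat{r}_\beta < r$ if and only if $a_\beta > 0$, together with its image version $t_\beta < t'$ if and only if $a_\beta > 0$. The first follows because $r - \hat{r}_\beta$ has leading term $a_\beta g_\beta$ (the section makes $g_\beta$ positive in $R$), so its sign matches $a_\beta$. For the second, in case~1 ($t'=t$) the same computation gives the claim; in case~2 ($t' = t + \epsilon g$) we use that $v(t - t_\beta) = g_\beta$ strictly dominates $v(g) = v(r-r')$, so the leading term of $t' - t_\beta$ still comes from $t - t_\beta$ with sign $a_\beta$.

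Next I partition $\{\beta < \alpha\} = B^+ \sqcup B^-$ by $B^\pm = \{\beta : \pm a_\beta > 0\}$; since this partition is a cover, at least one of $B^\pm$ is cofinal in $\alpha$ (they cannot both be bounded by a single $\beta_0 < \alpha$). Without loss of generality, $B^+$ is cofinal, and I claim condition~(1) then holds; the symmetric argument with $B^-$ gives condition~(2).

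For the cofinality claim, given $x \in A_{<r}$, I need to find $\beta \in B^+$ with $\hat{r}_\beta \ge x$. The key estimate is that for $\beta \in B^+$ with $g_\beta < v(r-x)$ in the value group,
\[
\hat{r}_\beta - x \;=\; (r - x) \;-\; (r - \hat{r}_\beta),
\]
and since $v(r - \hat{r}_\beta) = g_\beta < v(r-x)$ forces $r - \hat{r}_\beta$ to be infinitely smaller in absolute value than $r - x > 0$, the leading term of the right-hand side is that of $r - x$, which is positive; hence $\hat{r}_\beta > x$, and combined with $\hat{r}_\beta < r$ from $a_\beta > 0$ this places $\hat{r}_\beta \in \hat{A}_{<r}$ above $x$. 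The same computation on the $\phi$-side (using that $\phi$ is order- and valuation-preserving and the side lemma for $t_\beta$) transfers the cofinality to $\phi(\hat{A}_{<r})$ in $\phi(A)_{<t'}$. Such a $\beta \in B^+$ is produced because $g_\beta$ strictly decreases and $B^+$ is cofinal, so cofinally many $\beta \in B^+$ can be found with $g_\beta$ below any prescribed level, in particular below $v(r-x)$.

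The main obstacle is verifying that $v(r-x)$ lies strictly above $\inf_\beta g_\beta$ for every $x \in A_{<r}$, so that the estimate above applies. In case~1 ($t \notin \phi(A)$, $\alpha$ a limit), the maximality of the development together with Observation~\ref{O:GammainH} (giving $\Gamma(r) \subset H$ with no least element) shows that the $g_\beta$ exhaust $\Gamma(r)$ cofinally from above, ruling out the edge case. In case~2 ($t \in \phi(A)$ with witness $r' \in A$), a parallel argument using the value-transcendental structure at $v(g) \notin v(A^\times)$ shows that either $v(r-x) > \inf g_\beta$, or $x$ is on the $r'$-side of the cut where $r'$ itself plays the cofinalizing role, so the estimate still applies. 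This case analysis, parallel to \cite[Lemma~3.3]{VMM}, is the technical crux of the proof.
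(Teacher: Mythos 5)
The paper omits the proof of this observation, so I can only assess your argument on its own terms. Your side lemma (the sign of $r - \hat r_\beta$ matches the sign of $a_\beta$, and likewise for $t' - t_\beta$, because the relevant $v(g)=v(r-r')$ in the value-transcendental case lies strictly below every $g_\beta$) is correct, and the estimate in the fourth paragraph is the right one. For the \emph{immediate transcendental case} ($t\notin\phi(A)$, so $\alpha$ is a limit and both $B^+,B^-$ are defined on a limit ordinal), your ``main obstacle'' can indeed be closed: if some $x\in A$ had $v(r-x)<g_\beta$ for every $\beta<\alpha$, one computes $\phi(x)_{g_\beta}=a_\beta$ and $\phi(x)_{>g_\beta}=t_\beta$ for every $\beta$, so that $t_\alpha$ is a head-truncation of $\phi(x)$ and hence lies in $\phi(A)$ by truncation closure, contradicting $t\notin\phi(A)$. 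Combined with cofinality of $B^+$ (or $B^-$), this gives the stated cofinality, and the image version follows by an identical computation applied to $t-\phi(x)$ in $k((H))$. So far so good.

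The gap is in the \emph{value transcendental case} ($t\in\phi(A)$), and it is not a cosmetic one: your dichotomy ``$B^+$ cofinal $\Rightarrow$ (1), $B^-$ cofinal $\Rightarrow$ (2)'' is simply false there. Suppose $\epsilon=+1$, so $r'<r$. Since $r'-\hat r_\beta$ has the sign of $a_\beta$, every $\hat r_\beta$ with $a_\beta>0$ satisfies $\hat r_\beta<r'$; hence $\hat A_{<r}=\{\hat r_\beta:\beta\in B^+\}$ is bounded above by $r'\in A_{<r}$, and (1) fails as soon as $A$ contains any element strictly between $r'$ and $r$ (which it does whenever $H$ has elements below $v(r-r')$, e.g.\ whenever $H$ is nontrivial and $v(r-r')>1$). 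This happens regardless of whether $B^+$ is cofinal in $\alpha$. The correct dichotomy in the value-transcendental case is by the sign $\epsilon$, not by which of $B^\pm$ is cofinal — and even that does not rescue the argument as written, since $r'\notin\hat A$ (it is $\hat r_\alpha$, not $\hat r_\beta$ for any $\beta<\alpha$), and even if one enlarges $\hat A$ to include $r'$, there remain elements of $A$ strictly between $r'$ and $r$ that nothing in $\hat A$ dominates. (Indeed, taking $\alpha=\omega$ with all $a_i>0$ and $\epsilon=+1$ makes $\hat A_{>r}$ empty and $\hat A_{<r}$ bounded by $r'$, so \emph{both} alternatives fail; you should convince yourself whether the observation as stated really covers the value-transcendental case, or whether an extra element such as $r'$ must be adjoined to $\hat A$ and the cut-side specified.) Your sentence about ``$r'$ itself plays the cofinalizing role'' acknowledges the problem but does not resolve it, since $r'$ is not available in $\hat A$ and cofinality still fails with it added. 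This case, not the coinitiality estimate, is where the real work lies.
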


Let $t'$ be the development $t$ of $r\in R-A$ over $(A, H, \phi)$ if $t\not\in\phi(A)$, and let $t'=t+\epsilon g$ as in Lemma  \ref{P:cut} if $t\in\phi(A)$.
By Lemma  \ref{P:cut} and Theorem \ref{T:cutsextend},  there is a unique order preserving isomorphism $\phi'\supset \phi$ from $RC(A\cup \{r\})$ onto $RC(\phi(A)\cup \{t'\})$ with $\phi'(r)=t'$.  Moreover, by the definition of development, the proper truncations of $t'$ are all in $\phi(A)$.   The following lemma says that the range of $\phi'$ is truncation closed.

\begin{lem} [Mourgues-Ressayre]\label{Development}
\label{development lemma}

Let $F$ be a truncation closed subfield of $k({G})$, and let $t\in k((G)) - F$, where all proper initial segments of
$t$ are in $F$.  Then the real closure of $F(t)$ is also a truncation closed subfield of $k((G))$.
\end{lem}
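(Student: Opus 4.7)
The plan has two stages: first, show that $F(t)$ is truncation closed as a subfield of $k((G))$; second, show that passing to the real closure inside $k((G))$ preserves truncation closedness. For the first stage, I would fix $s = P(t)/Q(t) \in F(t)$ with $P, Q \in F[X]$ together with a cut point $h \in G$, and exhibit $s_{<h}$ as an element of $F(t)$. The hypothesis on $t$ gives $t_{<h} \in F$, hence $s' := P(t_{<h})/Q(t_{<h}) \in F$, and the truncation closedness of $F$ yields $s'_{<h} \in F$. The remaining point is that $s_{<h} = s'_{<h}$, i.e., that $s - s'$ is supported in $\{g \in G : g \geq h\}$. One establishes this through Taylor expansions of $P$ and $Q$ in the variable $u := t - t_{<h}$, which is itself supported in $\{g \geq h\}$, followed by geometric-series inversion to express $1/Q(t)$; the support calculus in $k((G))$ then forces $s - s'$ to have support contained in $\{g \geq h\}$.

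For the second stage, set $F' := F(t)$, truncation closed by the first stage. Given $s \in RC(F')$ with minimal polynomial $p \in F'[X]$ and a truncation $s_{<h}$, the strategy is a Hensel-type argument for Hahn series. A Taylor expansion of $p$ around $s$, combined with the fact that $s - s_{<h}$ is supported above $h$, shows that $p(s_{<h})$ is small compared to $p'(s_{<h})^2$ once $h$ is sufficiently deep in the support of $s$. A Hensel lemma for Hahn series then produces a root $\tilde s$ of $p$ in $k((G))$ in a small valuative ball around $s_{<h}$; since $p \in F'[X]$, we have $\tilde s \in RC(F')$, and uniqueness of the Hensel root forces $\tilde s = s_{<h}$, so $s_{<h} \in RC(F')$.

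The main obstacle is the second stage. Beyond the Hensel-type lemma itself, one must treat the possibility of multiple roots of $p$ by a Newton-polygon argument or an induction on $\deg(p)$, as in the Mourgues--Ressayre proof from \cite{MR}, which I would follow closely. The first stage is comparatively routine once the support calculus in $k((G))$ is set up, and has the flavor of establishing continuity of rational functions in the valuation topology on $k((G))$.
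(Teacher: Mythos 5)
Note first that the paper states this lemma without proof, attributing it to Mourgues--Ressayre \cite{MR}; the relevant comparison is therefore with the published argument there and with its reworkings in \cite{KKMZ} and \cite{FKK}. Your two-stage decomposition ($F(t)$ first, then its real closure) is a reasonable shape and is indeed roughly how the literature proceeds, but both stages as you sketch them contain genuine errors. In Stage 1, the central claim that $s_{<h}=s'_{<h}$ with $s'=P(t_{<h})/Q(t_{<h})$ and $t$ truncated at the \emph{same} cut $h$ is false: the Taylor remainder $P'(t_{<h})(t-t_{<h})+\cdots$, and the geometric-series correction for $1/Q$, can contribute at or above $h$ whenever $t$ has terms of value $>1$. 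Concretely, take $g\in G^{<1}$ and $t=g^{-1}+1+g+g^2+\cdots$: the coefficient of $1\in G$ in $t^2$ is $3$ (from $1\cdot 1$ plus twice $g^{-1}\cdot g$), whereas truncating $t$ at the cut $1$ gives $g^{-1}+1$, whose square has coefficient $1$ there --- so the truncations already disagree. The repair is to truncate $t$ at a strictly deeper cut $h'$ chosen so that all correction terms fall below $h$ (a bound depending on $\deg P$, $\deg Q$, the $v(c_i)$, and $v(t)$), and you must check that such a proper truncation of $t$ exists. That repair is unavailable when $t$ has successor length, i.e., $t=t'+ag$ with $t'\in F$ and $g\in G$, so that $F(t)=F(g)$; that case (the value-transcendental one) requires a direct support/coset analysis rather than a Taylor argument, and you do not address it.

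In Stage 2 the Hensel step does not deliver the conclusion. The unique root of $p$ in a small valuative ball around $s_{<h}$ is $s$ itself, which was already known to lie in $RC(F')$; the asserted identity $\tilde s=s_{<h}$ cannot hold, since $s_{<h}$ is in general not a root of $p$ at all. As written, the step produces no new element of $RC(F')$. You do flag that one ``must treat the possibility of multiple roots \ldots\ by a Newton-polygon argument or an induction on $\deg(p)$, as in the Mourgues--Ressayre proof,'' but that double induction (on $\deg p$ and on the ordinal length of the truncation of $s$, pushing each truncation into a lower-degree extension) is precisely the content of this stage and cannot be left as a pointer; a single Hensel application is not a substitute for it.
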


Thus, in both the  immediate transcendental case and the value transcendental case, we  have defined a development triple $(A'=RC(A\cup \{r\}), H', \phi')$ extending  $(A, H, \phi)$ with $\phi'(r)=t'$.

 We have proven Theorem \ref{T:MRext}.

\begin{obs}\label{O:immedtrans}
Let $(A', H', \phi')$ be the development triple extending  $(A, H, \phi)$ such that   $r\in A'$ for $r\in R-A$ constructed in Theorem \ref{T:MRext}.
In the immediate transcendental case where $t\not\in\phi(A)$, $H'=H$, whereas in the value transcendental case where $t\in\phi(A)$, $H$ is a proper subgroup of the group $H'$ given in Lemma $\ref{P:cut}$.
\end{obs}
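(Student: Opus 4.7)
My plan is to inspect the two cases of the construction in Theorem \ref{T:MRext} and read off how the value group section behaves in each.

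In the immediate transcendental case, where $t\not\in\phi(A)$, the construction sets $\phi'(r) = t$, which already lies in $k((H))$. By Lemma \ref{Development}, the image of $\phi'$ is a truncation closed subfield of $k((H))$. Equivalently, Observation \ref{O:GammainH} gives $\Gamma(r)\subset H$, so the extension $A\subset A' = RC(A\cup\{r\})$ is immediate in the valuation-theoretic sense. In particular the value group of $A'$ coincides with that of $A$, and $H$ continues to serve as a value group section for $A'$, so $H' = H$.

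In the value transcendental case, where $t\in\phi(A)$, Lemma \ref{P:cut} prescribes $H' = \divhull{H\cup\{g\}}$ with $g = |r - r'|$ and $r' = \phi^{-1}(t)\in A$. Since $A$ is real closed, $H\cong v(A^\times)$ is already divisible, so $H\subsetneq H'$ is equivalent to $g\notin H$. It suffices to prove the stronger statement $v(g)\notin v(H)$. Suppose for contradiction that $v(g)\in v(H)$; pick $h\in H$ with $v(h) = v(g)$, let $c\in k^{>0}$ be the residue of $g/h$, and set $\hat{r}_\alpha := r'$, $g_\alpha := h$, $a_\alpha := \epsilon c$. Then $\hat{r}_\alpha\in A$ satisfies $\phi(\hat{r}_\alpha) = t_\alpha$, and $v(r - \hat{r}_\alpha) = v(h)\in v(H)$, so $t + \epsilon c h$ is a development of $r$ over $(A,H,\phi)$ of length $\alpha + 1$, contradicting the maximality of $t = t_\alpha$.

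The only real content is this contradiction argument in the value transcendental case; the immediate case is essentially a matter of reading off the construction. The argument turns the maximality of the development into a genuine enlargement of the value group section, using only that $H\cong v(A^\times)$ as ordered groups (to produce $h$) and that $k$ is a residue field section for $R$ contained in $A$ (to ensure $r' + \epsilon c h\in A$).
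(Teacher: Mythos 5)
Your proof is correct, and since the paper states this observation without an explicit proof (treating it as something to be read off from the construction of Theorem \ref{T:MRext}), you have supplied exactly the reasoning that is implicit there. Both halves are handled properly: in the immediate transcendental case the new element $t'=t$ already lies in $k((H))$, so nothing forces $H$ to grow; in the value transcendental case you correctly invoke the maximality of the development to show that $v(g)\notin v(H)$, whence $g\notin H$ and (using that $H$ is already divisible because $A$ is real closed) $\divhull{H\cup\{g\}}\supsetneq H$.

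Two small remarks. First, the conclusion $g\notin H$ can be obtained even more cheaply: since $r\notin A$ while $r'\in A$, if $g=|r-r'|$ were in $H\subset A$ then $r=r'\pm g$ would lie in $A$, a contradiction. However, your stronger claim $v(g)\notin v(H)$ is the one that actually matters for the rest of the construction (it is what guarantees $H'$ is again a value group section, i.e., that $v$ remains injective on $H'$), so your choice to prove it is the right instinct. Second, when you append $a_\alpha g_\alpha = \epsilon c\, h$ to $t_\alpha$ to build the alleged length-$(\alpha+1)$ development, you should note (or at least observe it is routine) that $h<g_\beta$ for all $\beta<\alpha$, which follows since $v(r-r')<v(r-\hat r_\beta)=v(g_\beta)$ because the coefficient $a_\beta$ is nonzero; this is what keeps $t_\alpha+\epsilon c h$ a legitimate series with anti-well-ordered support. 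With that noted, the contradiction with maximality is airtight.
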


We will use the next notion extensively in the section on exponential case.

\begin{defn}
\begin{enumerate}
Let  $(A', H', \phi')$ and  $(A, H, \phi)$ be development triples.
\item    The triple  $(A', H', \phi')$
 is a {\em value group preserving extension} of $(A, H, \phi)$ if $(A', H', \phi')$ extends  $(A, H, \phi)$ and $H'=H$.

\item  A triple  $(A, H, \phi)$ is {\em maximal} if $(A, H, \phi)$ admits no proper value group preserving extension.
\end{enumerate}
\end{defn}

The next observation follows immediately from Observation \ref{O:GammainH}.  We will use the equivalence of the first two statements often.

\begin{lem}\label{O:maximality} Let  $(A, H, \phi)$ be a development triple.
The following are equivalent.
\begin{enumerate}
\item
$(A, H, \phi)$  is maximal.
\item For all $r\in R-A$, the development of $r$ over $(A, H, \phi)$ is in $\phi(A)$.
\item
 $(\forall r\in R-A)[\, \Gamma(r)\not\subset H].$

\item For all $r\in R-A$, $H$ is not a value group section for $RC(A\cup\{r\})$.

\end{enumerate}
\end{lem}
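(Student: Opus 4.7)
The plan is to take statement (2) as the pivot and establish (2)$\iff$(3), (2)$\iff$(1), and (2)$\iff$(4) separately, using Observations \ref{O:GammainH} and \ref{O:immedtrans} together with the construction of Theorem \ref{T:MRext}. Since each of (1)--(4) is a universal statement over $r \in R - A$, in each case I would first prove the equivalence pointwise in $r$ and then quantify.

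The equivalence (2)$\iff$(3) is essentially a rewriting of Observation \ref{O:GammainH}. That observation gives, for each $r \in R - A$, that $\Gamma(r) \subset H$ iff the maximum development $t_\alpha \notin \phi(A)$. Negating both sides pointwise and quantifying over $r$ produces (2)$\iff$(3) directly.

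For (2)$\iff$(1), I would invoke Observation \ref{O:immedtrans}. If (2) fails, witnessed by $r \in R - A$ whose maximum development lies outside $\phi(A)$, then the immediate transcendental case of Theorem \ref{T:MRext} produces an extension $(RC(A\cup\{r\}), H, \phi')$ with the same value group section $H$, so $(A, H, \phi)$ is not maximal. Conversely, suppose $(A', H, \phi')$ is a proper value group preserving extension, and pick $r \in A' - A$. The restriction of $\phi'$ to $RC(A\cup\{r\})$ is an order preserving isomorphism extending $\phi$ with image in $k((H))$; by the uniqueness in Theorem \ref{T:cutsextend}, it must coincide with the extension produced by Theorem \ref{T:MRext}. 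But by Observation \ref{O:immedtrans}, value group preservation happens only in the immediate transcendental case, so the maximum development of $r$ is not in $\phi(A)$ and (2) fails.

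The equivalence (2)$\iff$(4) follows from the same dichotomy: pointwise in $r$, the immediate transcendental case (i.e.\ $t \notin \phi(A)$) is exactly the case in which $H$ remains a value group section for $RC(A \cup \{r\})$, while the value transcendental case ($t \in \phi(A)$) is exactly the case in which $H'$ strictly contains $H$, so $H$ is not such a section. Thus the pointwise negation of (4) matches the pointwise negation of (2), and quantifying yields (2)$\iff$(4). The main obstacle is the backward direction of (1)$\Rightarrow$(2): one must certify that any proper value group preserving extension is already detected by a single-element subextension of the form handled by Theorem \ref{T:MRext}, which requires both the uniqueness from Theorem \ref{T:cutsextend} and the fact that the value group of the intermediate field $RC(A \cup \{r\}) \subset A'$ is contained in $H$, placing us in the hypothesis of the immediate transcendental case.
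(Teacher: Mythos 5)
Your decomposition (hub-and-spoke from $(2)$) is a genuinely different arrangement from the paper's, which proves the cycle $(1)\Rightarrow(2)\Rightarrow(3)\Rightarrow(4)\Rightarrow(1)$. The underlying tools---Observations \ref{O:GammainH} and \ref{O:immedtrans}, Theorem \ref{T:MRext}, and the value group section property---are the same, so this is a matter of arrangement rather than a shorter or more general argument; both organizations do roughly equal work.

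There is, however, a gap in your justification of $(2)\Rightarrow(1)$ (i.e., $\neg(1)\Rightarrow\neg(2)$). You claim that the restriction $\psi$ of $\phi'$ to $RC(A\cup\{r\})$ ``must coincide with the extension produced by Theorem \ref{T:MRext}'' by the uniqueness in Theorem \ref{T:cutsextend}. But that uniqueness is relative to a \emph{fixed} target $b$: once $\phi'(a)=b$ is specified, the extending isomorphism is unique. Theorem \ref{T:MRext} chooses a particular target $t'$ for $r$, whereas $\psi$ sends $r$ to $\phi'(r)$, and nothing a priori forces $\phi'(r)=t'$. A transcendental cut over $\phi(A)$ can be realized by more than one element of $\ek{H}$, and Theorem \ref{T:cutsextend} yields a distinct (and still unique) isomorphism for each such realization. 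So ``must coincide'' does not follow, and the deduction that we are in the immediate transcendental case is not established by this route.

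The conclusion you want is correct and can be reached directly---indeed you gesture at the right fact in your last paragraph but do not let it carry the argument. If the maximum development $t$ of $r$ were in $\phi(A)$, say $\phi(r')=t$, then by maximality of the development $v(r-r')\notin v(H)$. But $r-r'\in RC(A\cup\{r\})\subseteq A'$, and $H$ is a value group section for $A'$ (since the extension is value group preserving), so $v(r-r')\in v(H)$, a contradiction. Hence $t\notin\phi(A)$ and $(2)$ fails, with no appeal to Theorem \ref{T:cutsextend}. This is in substance the paper's $(4)\Rightarrow(1)$ step. Your pointwise dichotomy for $(2)\Leftrightarrow(4)$ and your use of Observations \ref{O:GammainH} and \ref{O:immedtrans} for $(2)\Leftrightarrow(3)$ and $(1)\Rightarrow(2)$ are sound.
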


\begin{proof}
($1\implies 2$) Suppose $(A, H, \phi)$ is maximal.  Given $r\in R-A$.  Let $t$ be the development of $r$ over $(A, H, \phi)$.  If $t\not\in\phi(A)$,  by Theorem
\ref{T:MRext} and Observation \ref{O:immedtrans}, there is a development triple $(RC(A\cup\{r\}), H, \phi')$ with $\phi'(r)=t$ properly extending $(A, H, \phi)$, contradicting the maximality of $(A, H, \phi)$.

\noindent
($2\implies 3$)
If there is an $r\in R-A$ with $\Gamma(r)\subset H$, we have that the development $t$ of $r$ over $(A, H, \phi)$ satisfies
$t\not\in\phi(A)$ by Observation \ref{O:GammainH}.

\noindent
($3\implies 4$)
Let $r\in R-A$.  Since $\Gamma(r)\not\subset H$, there is an $r'\in A$ such that $v(r-r')\not\in H$.
Since $r-r'\in RC(A\cup\{r\})$, $H$ is not a value group for $ RC(A\cup\{r\})$.

\noindent
($4\implies 1$)  Suppose there is some $(A', H', \phi')$ extending  $(A, H, \phi)$ and $H'\not=H$.
Let $r\in A'-A$ with $v(r)\in H'-H$.
Then, $H$ is not a value group section for $RC(A\cup\{r\})$.
\end{proof}

 Note that $(k, \{1\}, id)$ is a maximal development triple as is any triple of the
 form $(R, G, \delta)$ with respect to a real closed field $R$.

\begin{lem}\label{MRcan} Given a real closed field $R$, a residue field section $k$,
and a well ordering of $R=(r_i)_{i<\lambda}$, there is  a canonical development triple
 $(R, G, \delta)$  with respect to $R$ and $k$.
\end{lem}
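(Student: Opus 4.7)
The plan is to build the triple $(R,G,\delta)$ by transfinite recursion along the well ordering $(r_i)_{i<\lambda}$, starting from the base development triple $(k,\{1\},\mathrm{id})$ and invoking Theorem~\ref{T:MRext} at each successor stage to absorb one more element of $R$ into $A$.

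Explicitly, set $(A_0,H_0,\phi_0)=(k,\{1\},\mathrm{id})$, which is a development triple by inspection. At a successor stage, given $(A_\alpha,H_\alpha,\phi_\alpha)$, let $i(\alpha)$ be the least index $i<\lambda$ with $r_i\notin A_\alpha$, and terminate if no such $i$ exists; otherwise, use Theorem~\ref{T:MRext} to obtain a development triple $(A_{\alpha+1},H_{\alpha+1},\phi_{\alpha+1})\supseteq(A_\alpha,H_\alpha,\phi_\alpha)$ with $r_{i(\alpha)}\in A_{\alpha+1}$. At a limit stage $\alpha$, take $A_\alpha=\bigcup_{\beta<\alpha}A_\beta$, $H_\alpha=\bigcup_{\beta<\alpha}H_\beta$, $\phi_\alpha=\bigcup_{\beta<\alpha}\phi_\beta$. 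One checks routinely that the union of an increasing chain of development triples is again a development triple: real closedness passes to unions of chains of real closed fields, $H_\alpha$ remains a value group section for $A_\alpha$ because every $a\in A_\alpha$ lies in some $A_\beta$ so $v(a)\in v(H_\beta)\subseteq v(H_\alpha)$, and truncation closure of the image is preserved since for $s\in\phi_\beta(A_\beta)$ and $h\in H_\gamma$ with $\gamma\ge\beta$ the truncation $s_{<h}$ already lies in $\phi_\gamma(A_\gamma)$.

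The recursion terminates at some $\mu\le\lambda$ with $A_\mu=R$, because the indices $i(\alpha)$ are strictly increasing in $\lambda$ and the enumeration $(r_i)_{i<\lambda}$ exhausts $R$. Setting $G=H_\mu$ and $\delta=\phi_\mu$ produces the desired development triple $(R,G,\delta)$. Canonicity is automatic once one notices that every choice in the recursion is forced: the base triple depends only on $k$; the element handled at stage $\alpha$ is the $<$-least element of $R\setminus A_\alpha$; and the extension supplied by Theorem~\ref{T:MRext} is itself uniquely determined, since by Lemma~\ref{P:cut} together with Theorem~\ref{T:cutsextend} the image $\phi_{\alpha+1}(r_{i(\alpha)})$ is forced to be the development $t$ in the immediate transcendental case and $t+\epsilon g$ in the value transcendental case, with no remaining freedom.

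The main point that I expect to need care is verifying that $G=H_\mu$ is actually a value group section for the full field $R$, rather than just for each intermediate $A_\alpha$. This is handled by tracking how $H$ evolves in the two cases of Theorem~\ref{T:MRext}: by Observation~\ref{O:immedtrans}, $H$ is unchanged in the immediate transcendental case, and in the value transcendental case it enlarges precisely to $\divhull{H\cup\{g\}}$ with $g=v(r-\hat r_\beta)\in R$. Inductively, the $H_\alpha$ form an increasing chain of subgroups of $R^{>0}$ for which $h\mapsto v(h)$ is an order-preserving embedding into $v(R^\times)$; surjectivity onto $v(R^\times)$ at stage $\mu$ follows because any $r\in R=A_\mu$ appears in some $A_\alpha$, so its Archimedean class is already represented in $H_\alpha\subseteq H_\mu$.
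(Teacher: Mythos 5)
Your proof is correct and follows essentially the same route as the paper: transfinite recursion along the well ordering starting from $(k,\{1\},\mathrm{id})$, invoking Theorem~\ref{T:MRext} at successors to absorb the least unabsorbed $r_i$, taking unions at limits, and observing that each step is forced (hence the result is canonical). Your write-up merely spells out verifications the paper leaves implicit, such as that the union of a chain of development triples is again a development triple.
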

We construct $(R, G, \delta)$ from a chain of development triples
$(R_i,G_i,\delta_i)_{i<\lambda}$.
Once $R_i$ is defined for $i<\lambda$, we let  ${m(i)}<\lambda$ be the least ordinal such that $r_{m(i)}\in R-R_i$.
We define $(R_0,G_0,\delta_0)$ to be $(k,\{1\},id)$.
 Let $j<\lambda$.  Given development triples $(R_i, G_i, \delta_i)_{i<j}$, we define $(R_j, G_j, \delta_j)$ by induction as follows.
If $j$ is a limit ordinal, we let $R_j=\cup_{i<j} R_i$, $G_j=\cup_{i<j} G_i$, and $\delta_j=\cup_{i<j} \delta_i$.
Note that $(R_j, G_j, \delta_j)$ is a development triple since it is a union of  a chain of development triples.
If $j=l+1$ is a successor ordinal,  we  take $(R_j, G_j, \delta_j)\supset(R_l, G_l, \delta_l)$ so that $r_{m(l)}\in R_j$ using Theorem \ref{T:MRext}.

Similarly, one prove that there is a canonical maximal development triple  extension by using Theorem \ref{T:MRext} and Observation \ref{O:immedtrans} and checking whether some $r\in R$ can be added to a triple using Lemma \ref{O:maximality}.
\begin{lem}
\label{Maximal}

Let  $(A,H,\phi)$ be a development triple with respect to $R$ and $k$. Given a well ordering of $R=(r_i)_{i<\lambda}$, there is a canonical development triple $(A,H,\phi')$ extending  $(A,H,\phi)$ that is maximal.

\end{lem}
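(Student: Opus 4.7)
The plan is to mimic the construction in Lemma \ref{MRcan}, but at each successor stage restrict attention to the immediate transcendental case, so that the value group section $H$ is preserved throughout (Observation \ref{O:immedtrans}). The well-ordering on $R$ provides canonical choices. (I read the statement as producing $(A',H,\phi')\supseteq (A,H,\phi)$ with $A'\supseteq A$, since if $A'=A$ then forcing $\phi'=\phi$ would make the statement vacuous.)

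Explicitly, I would set $(A_0,H,\phi_0)=(A,H,\phi)$. At a limit stage $s$, take $A_s=\bigcup_{i<s}A_i$ and $\phi_s=\bigcup_{i<s}\phi_i$; since every triple in the chain has the same value group section $H$, the union $(A_s,H,\phi_s)$ is again a development triple. At a successor stage $s=l+1$, consider the set
\[
I_l=\{\,i<\lambda\mid r_i\in R-A_l\text{ and the maximum development }t_i\text{ of }r_i\text{ over }(A_l,H,\phi_l)\text{ satisfies }t_i\notin\phi_l(A_l)\,\},
\]
where the maximum development exists and is unique by Lemma \ref{L:devuniq}. If $I_l=\emptyset$, halt. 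Otherwise let $i_l=\min I_l$ in the well-ordering, and apply Theorem \ref{T:MRext} to obtain an extension $(A_{l+1},H',\phi_{l+1})$ with $r_{i_l}\in A_{l+1}$. Since $t_{i_l}\notin\phi_l(A_l)$, we are in the immediate transcendental case, and by Observation \ref{O:immedtrans} we have $H'=H$, so the chain is a chain of value group preserving extensions.

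Termination is straightforward: each successor step strictly enlarges $A_s$ by adjoining a new element of $R$, so the process halts at some stage $\mu\le\lambda^+$. Setting $(A',H,\phi'):=(A_\mu,H,\phi_\mu)$, the halting condition $I_\mu=\emptyset$ says that for every $r\in R-A'$ the maximum development of $r$ over $(A',H,\phi')$ lies in $\phi'(A')$, which is precisely condition~(2) of Lemma \ref{O:maximality}; hence $(A',H,\phi')$ is maximal. Canonicity is immediate from the construction: at each successor stage $i_l$ is determined as the least element of $I_l$ in the given well-ordering of $R$, and the extension itself is canonical by the uniqueness clauses in Theorems \ref{T:MRext} and \ref{T:cutsextend}.

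The main conceptual subtlety, and thus the only point that needs attention, is that eligibility for addition is not monotone along the chain: an index $i$ may be blocked at stage $l$ because $t_i\in\phi_l(A_l)$, yet at a later stage the maximum development of $r_i$ lengthens and may land outside the new image, making $i$ eligible again. Consequently the construction cannot just sweep through $\lambda$ once; at each stage we must rescan the whole well-ordering of $R$ to compute $I_l$ afresh. This is why the well-ordering, rather than any partial enumeration, is the object that makes the construction canonical, and it is exactly the ingredient that will matter when the complexity of this construction is analyzed in \S\ref{S:compEIP}.
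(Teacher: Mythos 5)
Your construction is correct and follows the approach the paper intends (the paper's own ``proof'' is a one-sentence sketch citing Theorem~\ref{T:MRext}, Observation~\ref{O:immedtrans}, and Lemma~\ref{O:maximality}); you also rightly read the lemma as producing $(A', H, \phi')$ with $A' \supseteq A$, which is surely what is meant.

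The ``main conceptual subtlety'' you highlight is, however, stated in the wrong direction. You claim that an index $i$ blocked at stage $l$ (meaning the maximum development $t_i$ of $r_i$ lies in $\phi_l(A_l)$) may later become eligible because the development lengthens out of the image. This cannot happen along a value-group-preserving chain: if $t_i = \phi_l(r')$ with $r' \in A_l$, then maximality of the development forces $v(r_i - r') \notin H$; at any later stage $(A_{l'}, H, \phi_{l'})$ we still have $r' \in A_{l'}$ with $\phi_{l'}(r') = t_i$, and by injectivity of $\phi_{l'}$ it remains the unique preimage of $t_i$, so $v(r_i - r') \notin H$ still prevents the development of $r_i$ from lengthening, and $t_i$ remains in the image. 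Equivalently, by Observation~\ref{O:GammainH}, $r_i$ is blocked exactly when $\Gamma(r_i) = \{v(x - r_i) : x \in A_l\} \not\subset H$, and $\Gamma(r_i)$ only grows as $A_l$ grows with $H$ fixed. The possible non-monotonicity runs the other way: an index eligible at stage $l$ (with $\Gamma(r_i) \subset H$) may become blocked at stage $l' > l$ because the enlarged domain can introduce some $x \in A_{l'} \setminus A_l$ with $v(x - r_i) \notin H$. So one does need to recheck eligibility at each stage, but any index found blocked can be permanently discarded; it will never resurface. This does not affect the validity of your construction, but since you singled it out as the one point needing attention, it is worth getting the direction right.
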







\section{Exponential integer parts}\label{S:Rconst}

To show that every real closed exponential field has an exponential integer part, Ressayre lets the value group section do most of the work.  Below, we define a special kind of development triple, with the added features we want for the group.

\begin{defn} [Dyadic development triple]

Let $R$ be a real closed exponential field, and let $k$ be a residue field section.  Let $(A, H, \phi)$ be a development triple with respect to $R$ and $k$.  Then $(A,H,\phi)$ is a \emph{dyadic development triple} for $R$ and $k$ if
\begin{equation}
\phi(\log H)=\phi(A)\cap \ek{H^{>1}}.
\end{equation}

Equivalently, $(A, H, \phi)$ is dyadic if
\begin{enumerate}

\item  for all $r\in H$, $\log r\in A$ and $\phi(\log r)\in \ek{H^{>1}}$, and
\item  for all $r\in A$, if $\phi(r)\in\ek{H^{>1}}$, then $2^r\in H$.
\end{enumerate}
\end{defn}

Rephrasing  Lemma \ref{L:expIP} in terms of this terminology, if $R$ has a dyadic triple $(R, G, \delta)$, then $R$ has an exponential integer part.  So,  proving Theorem B$'$ is equivalent to showing that every real closed exponential field $R$ with a residue field section $k$ has a dyadic triple $(R, G, \delta)$ with respect to $R$ and $k$.












%

\subsection{Extending dyadic triples}

Most of the work in the proof of Theorem B$'$ is showing how to extend one dyadic and maximal triple to another such  triple.


\begin{prop} [Main Technical Lemma]
\label{main technical} Suppose
$(A,H,\phi)$ is a dyadic and maximal triple, and $y\in R - A$.  Then there is a dyadic and maximal  triple $(A',H',\phi')\supseteq (A,H,\phi)$ such that $y\in A$.
\end{prop}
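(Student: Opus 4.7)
The plan is to construct $(A', H', \phi')$ as the union of a countable chain of dyadic development triples starting from $(A, H, \phi)$, and then to pass to a maximal extension. First, since $(A, H, \phi)$ is maximal, Lemma \ref{O:maximality} tells us the maximum development $t$ of $y$ over $(A, H, \phi)$ lies in $\phi(A)$; writing $t = \phi(r_0)$ and setting $g = |y - r_0|$, we have $v(g) \notin H$. By Theorem \ref{T:MRext} (value transcendental case), we obtain a triple $(\tilde A, \tilde H, \tilde\phi) \supseteq (A, H, \phi)$ with $\tilde A = RC(A \cup \{y\})$, $\tilde H = \divhull{H \cup \{g\}}$, and $\tilde\phi(y) = t + \epsilon g$. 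This contains $y$, but it is typically not dyadic: the new generator $g \in \tilde H^{>1}$ needs $\log g \in \tilde A$ with $\tilde\phi(\log g) \in \ek{\tilde H^{>1}}$, and every new $r \in \tilde A$ with $\tilde\phi(r) \in \ek{\tilde H^{>1}}$ needs $2^r \in \tilde H$.

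To restore dyadicity I would iterate. Define a chain $(A_n, H_n, \phi_n)_{n<\omega}$ with $(A_0, H_0, \phi_0) = (\tilde A, \tilde H, \tilde\phi)$; at step $n \to n+1$ apply Theorem \ref{T:MRext} successively to the countable list of pending elements
\begin{enumerate}
\item $\log h$ for each $h \in H_n \setminus H_{n-1}$ with $h > 1$ (available in $R$ by exponentiation), and
\item $2^r$ for each $r \in A_n \setminus A_{n-1}$ with $\phi_n(r) \in \ek{H_n^{>1}}$.
\end{enumerate}
Each invocation enlarges the triple by either an immediate or a value transcendental step; in case (1) we expect an immediate transcendental extension with new purely infinite image, and in case (2) the value group section gains a new generator $2^r$ whose logarithm $r$ is already in $A_n$. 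Truncation closedness is preserved at every step by Lemma \ref{Development}. Let $(A^\omega, H^\omega, \phi^\omega) = \bigcup_n (A_n, H_n, \phi_n)$; both containments in $\tilde\phi(\log H^\omega) = \tilde\phi(A^\omega) \cap \ek{{H^\omega}^{>1}}$ hold element-by-element, with each element handled at a finite stage, so $(A^\omega, H^\omega, \phi^\omega)$ is dyadic and contains $y$. Finally, I apply Lemma \ref{Maximal} to extend to a maximal triple $(A', H', \phi')$. Since maximalization proceeds only via the immediate transcendental case, it adds no new generators to the value group section; one verifies that any element it adjoins whose image in $\ek{H^{\omega,>1}}$ is purely infinite must already have been closed off under step~(2), so dyadicity survives (a further round of (1)--(2) followed by maximalization can be interleaved if needed, converging in countably many rounds).

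The main obstacle is the verification in step~(1) that $\log h$ has its development entirely in $\ek{H^{>1}}$ rather than picking up support in $H^{\leq 1}$. This is precisely where axiom (5) of Definition \ref{D:EXP} enters: the bound $2^x > x^n$ for $x > n^2$ yields $\log h < h/n$ for all $n$, placing $\log h$ in a strictly smaller Archimedean class than $h$, while $\log h$ still dominates every element of the earlier value group section $H_{n-1}$ (in particular, $v(\log h) > 1$). From this one deduces that no term appearing in the development of $\log h$ over $(A_n, H_n, \phi_n)$ can have valuation in $H^{\leq 1}$, giving the required purely infinite image. A secondary technical point is the interplay between the closures (1) and (2) and maximalization: each round introduces only countably many new elements, so the composite process stabilizes at a countable limit producing the desired $(A', H', \phi')$.
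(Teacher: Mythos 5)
Your overall plan---iteratively close under logarithms of value-group elements and under exponentials of purely infinite elements, then maximalize---is the right shape, and you correctly identify axiom (5) as the tool controlling the development of $\log h$. But there are two genuine gaps, plus a smaller error.

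The main gap is in the final maximalization step. You assert that ``any element it adjoins whose image in $\ek{{H^{\omega}}^{>1}}$ is purely infinite must already have been closed off under step~(2), so dyadicity survives.'' This is not true. Applying Lemma~\ref{Maximal} to $(A^\omega, H^\omega, \phi^\omega)$ adjoins genuinely new elements $r$ via immediate transcendental extensions, and there is no reason such a new $r$ cannot have $\phi'(r) \in \ek{{H^{\omega}}^{>1}}$; when it does, $2^r$ has valuation not in $H^\omega$, so dyadicity fails and one must enlarge the group. This is exactly why the paper interleaves maximalization with group-enlargement transfinitely: after forming $H_{j+1}=\divhull{H_j\cup\{2^r\mid r\in B_j,\ \phi_j(r)\in\ek{H_j^{>1}}\}}$ it re-maximalizes to get $B_{j+1}$, iterates through all successor and limit ordinals, and terminates at the first limit $\lambda$ where $\cup_{j<\lambda}B_j$ is already maximal for $\cup_{j<\lambda}H_j$ (which exists by a cardinality argument, since $R$ is a set). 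Your parenthetical ``a further round \dots{} converging in countably many rounds'' gestures at this but does not supply the argument, and ``countably many rounds'' is insufficient when $R$ is uncountable; the statement makes no countability assumption.

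Two smaller issues. First, your claim that ``$\log h$ still dominates every element of the earlier value group section $H_{n-1}$'' is false and unneeded: for $h=\log y$ one has $\log h = \log\log y$, which is smaller than $\log y\in H_0$. What is actually needed (and what the paper proves in Lemma~\ref{C:yinf} and Lemma~\ref{O:logh}) is only that $\log h$ is infinite, i.e.\ $v(\log h)>1$, so its development lies in $\ek{H^{>1}}$. Second, when you enlarge the group by throwing in elements $2^r$, you must verify that the result is still a value group section---i.e.\ that the valuation is injective on it. The paper does this in Lemma~\ref{L:valimp=}, using Lemma~\ref{L:Halphalog}; your proposal omits this verification entirely. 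Finally, the paper's initial move is different from yours in a way worth noting: rather than adjoining a single generator $g=|y-r_0|$ and letting the iteration discover the iterated log-remainders, it defines the sequence $y_0=y$, $y_{i+1}=|\log(y_i)-r'_{i+1}|$ up front, proves in Lemma~\ref{C:yinf} that each $y_i$ is infinite with $v(y_i)\notin H$, and takes $H_0=\divhull{H\cup\{y_i\mid i\in\omega\}}$ in one step, which makes the subsequent bookkeeping cleaner.
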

\begin{proof}

Without loss of generality, we may suppose that $y$ is positive, $v(y)>1$, and $v(y)\not\in H$.  We can take $v(y)\not\in H$ by Lemma \ref{O:maximality} (3). We may further suppose that $y>0$, since otherwise we could replace $y$ by $-y$, and we may suppose that $v(y)>1$, since otherwise we could replace $y$ by $y^{-1}$.

We will obtain the required dyadic triple $(A',H',\phi')$ as the union of a chain of maximal development triples
$(B_i,H_i,\phi_i)$ with the following features.

\begin{enumerate}

\item  $H_0\supseteq H$ is a value group section for $RC(\{\log_i(y)\mid i\in\omega\})$ where $\log_0 y=y$ and $\log_{i+1} y=\log_i(\log y)$  for all $i\in \omega$.
\item If $r\in H_i$, then $\log(r)\in B_i$ and  $\phi_i(\log(r))\in \ek{H_i^{>1}}$.
\item  If $r\in B_i$ and $\phi_i(r)\in\ek{H_i^{>1}}$, then $2^r\in H_{i+1}$.

\end{enumerate}

We begin by defining the development triple  $(B_0, H_0,\phi_0)$.
We first define a sequence $(y_i)_{i\in\omega}$ of elements in $R$ and describe some of their properties.

\begin{lem} \label{C:yinf}
  Let $(A, H, \phi)$ be a dyadic and maximal triple. Let $y=y_0\in R-A$
have the following properties for $i=0$:
\begin{equation}\label{E:v(y_i)}
y_i>0 \ \&\ v(y_i)>1\ \&\ v(y_i)\not\in H.
\end{equation}
  Given $y_i$, let $p_{i+1}$ be the development of $\log y_i$ over $(A, H, \phi)$.  We inductively assume that $y_i$ satisfies (\ref{E:v(y_i)}).  Then,
\begin{enumerate}

\item  $(\exists r'_{i+1}\in A)[\phi(r'_{i+1})=p_{i+1}]$,
\item $y_{i+1} := |\log(y_i) - r'_{i+1}|$ satisfies (\ref{E:v(y_i)}), and
\item $p_{i+1}\in\ek{H^{>1}}$.
\end{enumerate}
\end{lem}

\begin{proof} Suppose inductively that
 $y_i$ satisfies (\ref{E:v(y_i)}).  Let $p_{i+1}$ be the development  of
$\log(y_i)$ over $(A, H, \phi)$.  Since $(A, H, \phi)$ is maximal, there exists some $r'_{i+1}\in A$ such that $\phi(r'_{i+1})=p_{i+1}$.  By definition of (maximum) development, we have  that $v(y_{i+1})\not\in H$, and, in particular, $v(y_{i+1})\not=1$.
Suppose for a contradiction that $v(y_{i+1})<1$  or $y_{i+1}=0$.  We have $\log(y_i) = r'_{i+1} + \pm y_{i+1}$, and $r_{i+1}' = s +  s' $, where $\phi(s)\in\ek{H^{>1}}$ is the truncation of $\phi(r_{i+1}' )$  so that $\phi(s')\in\ek{H^{\le 1}}$.  So,  $y_i = 2^{s}2^{s'}2^{\pm y_{i+1}}$.  Since $v(s')\le 1$,  we have $2^{s'}$ equals some $c$ with $v(c)=1$.  If $v(y_{i+1})<1$  or $y_{i+1}=0$, then $2^{\pm y_{i+1}} = (1 + d)$, where $d$ is $0$ or $v(d)<1$.  Since $(A,H,\phi)$ is dyadic and $\phi(s)\in\ek{H^{>1}}$, we have $2^{s}\in H$.  Then, $v(y_i)=2^{s}$ , contradicting our assumption that $v(y_i)\not\in H$.
So, $y_{i+1}\not=0$ and $v(y_{i+1})>1$.   Since  $v(y_{i+1})<v(g)$ for all $g\in Supp(\phi(r'_{i+1}))$, we see that $\phi(r'_{i+1})=p_{i+1}\in\ek{H^{>1}}$.

\end{proof}

\begin{lem}
\label{y_iincr}

For all $i,n \in\omega$, $(y_{i+1})^n < y_i$.    Hence, $v(y_i)\not= v(y_j)$ for $i\not=j$.

\end{lem}

\begin{proof}
From the definition of $y_{i+1}$, we see that $y_{i+1} < \log(y_i)$, so
$y_{i+1}^n < \log(y_i)^n$.  Since $v(y_i)>1$,
$\log(y_i)^n < 2^{\log(y_i)}=y_i$ by property (5) of  Definition \ref{D:EXP}.

\end{proof}

  Let $H_{0,n}=\divhull{H\cup\{y_i\mid i\in\omega\}}$.  Let $H_0=\cup_{n\in\omega} H_{0,n}$.

\begin{lem}
For each $n$, $v(y_n)\not\in H_{0,n}$.   Hence, $H_0\supset H$ is a value group section for $RC(A\cup H_0)$.

\end{lem}

\begin{proof}

The statement is clear for $n = 0$.
We show the statement for $n+1$.  We assume for a contradiction that $v(y_{n+1})\in H_{0, n}$, i.e.,
$y_{n+1}=cgy_0^{q_0}\cdot \ldots\cdot y_n^{q_n}$, where
$c\in R$, $v(c)=1$, $g\in G$, and $q_i\in \mathbb{Q}$.  Taking logs, we obtain
\begin{equation}
\log(y_{n+1}) = \log(c)+\log(g) + q_0\log(y_0)+\ldots+q_n\log(y_n).
\end{equation}
Recall that, by definition, $\log(y_i) = r'_{i+1}+\epsilon_{i+1} y_{i+1}$, where $\phi(r'_{i+1})$ is the development of $\log(y_i)$ over $(A,H,\phi)$ and $\epsilon_{i+1} = \pm 1$.    Then, by subsitution and
rearranging terms, we have  that $ \epsilon_{n+2}y_{n+2}$ equals
\begin{equation*}
  \log(c) + [\log(g) + q_0r'_1+\ldots + q_nr'_{n+1}-r'_{n+2}]+[q_0\epsilon_1y_1 + \ldots + q_n\epsilon_{n+1}y_{n+1}]
\end{equation*}
We have $v(\log c)=1$, $v(\log(g) + q_0r'_1+\ldots + q_nr'_{n+1}-r'_{n+2})\in H^{>1}$, and $v(q_0\epsilon_1y_1 + \ldots + q_n\epsilon_{n+1}y_{n+1})=v(y_1)$ by Lemmas \ref{C:yinf} and \ref{y_iincr}.  Thus,  $v(y_{n+2})$ is either in $H$ or equals $v(y_1)$, contradicting either Lemma   \ref{C:yinf} or Lemma  \ref{y_iincr}.
\end{proof}

\begin{lem}\label{O:logh}
If $h\in H_0$, then  the development of $\log h$ over $(A, H, \phi)$ is in $\ek{H_0^{>1}}$.
\end{lem}
\begin{proof}
Let  $h\in H_0$ with $h=g\prod_{i=0}^n y_{l_i}^{q^i}$.  So,
\begin{equation}
\log h=\log g+\sum_{i=0}^n q_i(r'_{i+1}+\pm y_{i+1})
\end{equation}  where $q_i\in \mathbb{Q}$.   The developments of $\log g$  and $r'_{i+1}$ are in  $\ek{H^{>1}}$ since $(A, H, \phi)$ is dyadic and by construction.  Since $v(y_{i+1})>1$, $\log(h)$ has a development in $\ek{H_0^{>1}}$.
\end{proof}

 By Theorem \ref{T:MRext} and Lemma \ref{Maximal}, we obtain $B_0$ and $\phi_0$ such that  $(B_0, H_0, \phi_0)$ is maximal and extends $(A, H, \phi)$.

We define $H_1=\divhull{H_0\cup\{2^r\mid r\in B_0 \ \&\ \ \phi_0(r)\in \ek{H_0^{>1}}\}}$.  As was the case for $H_0$,   for all $h\in H_1$, $\phi_0(\log h)\in \ek{H_0^{>1}}$.  The next lemma ensures that $H_1$ is a value group section for $RC(B_0\cup H_1)$.







Given $(B_i,H_i,\phi_i)_{i<\alpha}$ such that $(B_i,H_i,\phi_i)$ is maximal  and $\phi_i(h)\in\ek{H_i^{>1}}$ for all $h\in H_i$, we define the triple $(B_\alpha,H_\alpha, \phi_\alpha)$.  If $\alpha=j+1$,
 we let $H_{j+1}=\divhull{H_j\cup\{2^r\mid r\in B_j \ \&\ \ \phi_j(r)\in \ek{H_j^{>1}}\}}$.   By Theorem \ref{T:MRext} and Lemma \ref{Maximal}, we obtain $B_{j+1}$ and $\phi_{j+1}$ such that $(B_{j+1}, H_{j+1}, \phi_{j+1})$ is maximal and extends  $(B_j,H_j,\phi_j)$.

For $\alpha$ a limit ordinal, we let $H_\alpha = \cup_{i<\alpha} H_{i}$.  If $(\cup_{i<\alpha}  B_{i}, H_\alpha, \cup_{i<\alpha}  \phi_{i})$ is not maximal, then we use Lemma \ref{Maximal} to find a maximal triple $(B_\alpha, H_\alpha, \phi_\alpha)$ extending  $(\cup_{i<\alpha}  B_{i}, H_\alpha, \cup_{i<\alpha}  \phi_{i})$.
If $(\cup_{i<\alpha}  B_{i}, H_\alpha, \cup_{i<\alpha}  \phi_{i})$ is  maximal, then we set $B_\alpha=\cup_{i<\alpha}  B_{i}$ and $\phi_\alpha=\cup_{i<\alpha}  \phi_{i}$.  In this case,   $(B_\alpha, H_\alpha, \phi_\alpha)$ is the desired dyadic triple $(A',H',\phi')$.

The  analogue of the proof of Lemma \ref{O:logh} shows the following.

\begin{lem}\label{L:Halphalog}

For all $\alpha$,  if $h\in H_\alpha$, then $\phi_\alpha(\log h)\in \ek{H_\alpha^{>1}}$.

\end{lem}

The next lemma shows that $H_\alpha$ is a value group section for $RC(\cup_{i<\alpha} B_i \cup H_\alpha)$.
\begin{lem}\label{L:valimp=}

For all $\alpha$, if $h,h'\in H_\alpha$ and $v(h)=v(h')$, then $h = h'$.

\end{lem}

\begin{proof}

If $v(h)=v(h')$, then $h = ch'$, for some $c\in R^{>0}$ with $v(c)=1$.  By Lemma \ref{L:Halphalog}, we have $\phi_\alpha(\log h), \phi_\alpha(\log h')\in  \ek{H_\alpha^{>1}}$.
 Since  $\log(h) = \log(c) + \log(h')$, we must have
$\phi_\alpha(\log h)=\phi_\alpha(\log h')$  and $\log(c) = 0$, so $c = 1$.

\end{proof}

Since $R$ is a set, there exists some  limit ordinal $\lambda$ such
that $B_\lambda = \cup_{i<\lambda} B_{i}$ and
$\phi_\lambda=\cup_{i<\lambda} \phi_{i}$.  Then
$(B_\lambda,H_\lambda,\phi_\lambda)$ is a dyadic and maximal triple
extending
 $(A, H, \phi)$ and for which $y\in B_\lambda$, as required for Proposition \ref{main technical}.
\end{proof}

\begin{lem}\label{L:canRconst}
Given  a real closed exponential field $R$, a residue field section $k$,  and a well ordering of $R=(r_i)_{i<\lambda}$, there is a canonical
 dyadic triple $(R,G,\delta)$ with respect to these data.
 \end{lem}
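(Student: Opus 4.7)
My plan is to mirror the construction of Lemma \ref{MRcan}, replacing each appeal to Theorem \ref{T:MRext} at a successor stage with the Main Technical Lemma (Proposition \ref{main technical}), and to use the fixed well ordering $(r_i)_{i<\lambda}$ of $R$ to canonicalize every internal choice. The output will be a chain $(R_i,G_i,\delta_i)_{i\le\lambda^*}$ of dyadic and maximal development triples built by transfinite recursion, with $(R,G,\delta)$ as its terminal member.

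For the base step I would take $(R_0,G_0,\delta_0)=(k,\{1\},\mathrm{id})$. This triple is maximal by the remark following Lemma \ref{O:maximality}, and it is trivially dyadic since both $\mathrm{id}(\log\{1\})$ and $k\cap\ek{\{1\}^{>1}}$ reduce to $\{0\}$. At a successor stage $j+1$, assuming $(R_j,G_j,\delta_j)$ is dyadic and maximal, I would let $m(j)$ be the least ordinal with $r_{m(j)}\in R-R_j$ (halting the recursion when no such ordinal exists) and apply Proposition \ref{main technical} to $(R_j,G_j,\delta_j)$ with $y=r_{m(j)}$. Since the proof of that proposition invokes only Lemma \ref{Maximal} and Theorem \ref{T:MRext}, each of which becomes canonical once the well ordering is fixed, the resulting dyadic and maximal extension $(R_{j+1},G_{j+1},\delta_{j+1})$ is uniquely determined by $(R_j,G_j,\delta_j)$ and the well ordering.

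At a limit stage $j$ I would take componentwise unions $R_j=\bigcup_{i<j}R_i$, $G_j=\bigcup_{i<j}G_i$, $\delta_j=\bigcup_{i<j}\delta_i$. The crucial verification is that dyadicity passes to the union of a chain of dyadic triples: the forward inclusion $\delta_j(\log G_j)\subseteq\delta_j(R_j)\cap\ek{G_j^{>1}}$ is immediate, while for the reverse, if $r\in R_i$ and $\delta_j(r)\in\ek{G_j^{>1}}$ then the support of $\delta_i(r)$, being a subset of $G_i$, already lies in $G_i\cap G_j^{>1}=G_i^{>1}$, so dyadicity of $(R_i,G_i,\delta_i)$ yields $2^r\in G_i\subseteq G_j$.

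The main obstacle is preserving maximality across limit stages, since the union of a chain of maximal triples need not itself be maximal, whereas Proposition \ref{main technical} demands a maximal input. I would handle this by canonically maximalizing the union via Lemma \ref{Maximal} (using the same well ordering) before proceeding to the next successor, and then verifying that this value-group-preserving maximalization preserves dyadicity. That verification mirrors the interplay between the immediate transcendental case of Theorem \ref{T:MRext} and the dyadic condition exploited in the construction of the chain $(B_i,H_i,\phi_i)$ inside the proof of Proposition \ref{main technical}: any newly adjoined $r$ whose $\delta$-image is purely infinite over $G_j$ must already have $2^r\in G_j$ by the argument patterned on Lemma \ref{C:yinf}. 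The recursion then terminates at some $\lambda^*\le\lambda$ with $R_{\lambda^*}=R$, and $(R,G,\delta)=(R_{\lambda^*},G_{\lambda^*},\delta_{\lambda^*})$ is the required canonical dyadic triple.
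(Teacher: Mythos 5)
Your overall plan matches the paper's: transfinite recursion starting from $(k,\{1\},\mathrm{id})$, applying Proposition~\ref{main technical} at successor stages and taking unions at limits, then repairing maximality via Lemma~\ref{Maximal}. The paper encapsulates the limit-stage repair in Corollary~\ref{cor}. Your verification that dyadicity passes to unions is correct and matches the intent.

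However, there is a real difference in how you close the limit step, and it matters. You assert that the value-group-preserving maximalization $(\hat A,H,\hat\phi)$ of the union $(A,H,\phi)$ remains dyadic, justified "by the argument patterned on Lemma~\ref{C:yinf}." That is not what Lemma~\ref{C:yinf} says, and the assertion is exactly the delicate point. Maximalization (Lemma~\ref{Maximal}) only adds elements via the immediate transcendental case, and such an element $r$ can perfectly well have $\hat\phi(r)\in\ek{H^{>1}}$ with $2^r\notin H$; indeed this is the generic situation in the chain $(B_i,H_i,\phi_i)$ inside the proof of Proposition~\ref{main technical}, where the maximal triple $(B_0,H_0,\phi_0)$ contains elements like $c_0$ (in the notation of \S\ref{S:compEIP}) with $\phi_0(c_0)=\sum_{i\geq 1} y_i\in\ek{H_0^{>1}}$ yet $2^{c_0}\in H_1\setminus H_0$. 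So maximalizing a dyadic triple need not yield a dyadic triple, and the recursion cannot simply "proceed to the next successor" on the strength of that claim.

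The paper's Corollary~\ref{cor} sidesteps this: after maximalizing to $(\hat A,H,\hat\phi)$ it does \emph{not} claim the result is dyadic with the same $H$; instead it applies Proposition~\ref{main technical} once more to the first $r_i\notin\hat A$, producing a dyadic and maximal triple $(A',H',\phi')$ with a possibly strictly larger value group $H'\supseteq H$. Replace your preservation claim with this application of Proposition~\ref{main technical} at the end of each limit stage, and the proof goes through as the paper's does.
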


The proof is the same as in Lemma \ref{MRcan}, except that we use the
   following corollary at limit steps in our construction.

\begin{cor}
\label{cor}
Suppose $(A, H,\phi)$ is the union of a chain of dyadic triples.  Then there is a dyadic and maximal triple
$(A',H',\phi')$ extending $(A, H,\phi)$.
\end{cor}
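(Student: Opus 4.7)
The plan has two stages: first verify that any chain-union of dyadic triples is itself dyadic, and then produce a dyadic and maximal extension by mimicking the inductive machinery from the proof of Proposition \ref{main technical}, but started directly from $(A, H, \phi)$ rather than from a dyadic maximal triple augmented by a distinguished element $y$.

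For the first stage, write $(A, H, \phi) = \bigcup_i (A_i, H_i, \phi_i)$ with each $(A_i, H_i, \phi_i)$ dyadic. Given $h \in H$, I would choose $i$ with $h \in H_i$; dyadicness of $(A_i, H_i, \phi_i)$ gives $\log h \in A_i \subseteq A$ and $\phi(\log h) = \phi_i(\log h) \in \ek{H_i^{>1}} \subseteq \ek{H^{>1}}$. Given $r \in A$ with $\phi(r) \in \ek{H^{>1}}$, I would choose $i$ with $r \in A_i$; since $\phi_i(r) = \phi(r)$ has support in $H_i$, its support in fact lies in $H_i \cap H^{>1} = H_i^{>1}$, so dyadicness of $(A_i, H_i, \phi_i)$ gives $2^r \in H_i \subseteq H$.

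For the second stage, I would build an ascending chain of maximal development triples $(B_i, H_i, \phi_i)$. Set $(B_0, H_0, \phi_0)$ to be a maximal value-group-preserving extension of $(A, H, \phi)$ produced by Lemma \ref{Maximal}; clause (1) of dyadicness survives here because $H$ is unchanged and $\log h$ already lies in $A \subseteq B_0$. At a successor step define
\[
H_{i+1} = \divhull{H_i \cup \{2^r \mid r \in B_i,\ \phi_i(r) \in \ek{H_i^{>1}}\}},
\]
and use Theorem \ref{T:MRext} together with Lemma \ref{Maximal} to obtain a maximal triple $(B_{i+1}, H_{i+1}, \phi_{i+1})$ extending $(B_i, H_i, \phi_i)$. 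At limit ordinals, take unions and, if necessary, apply Lemma \ref{Maximal} once more to restore maximality. Since $R$ is a set and the chains $(B_i)$ and $(H_i)$ are monotone, the construction must stabilize at some ordinal $\lambda$; one then sets $(A', H', \phi') = (B_\lambda, H_\lambda, \phi_\lambda)$.

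The main obstacle is to maintain clause (1) of dyadicness -- that $\log h$ has purely infinite $\phi_i$-image for every $h \in H_i$ -- as new generators are adjoined at each stage. This is handled exactly as in Lemmas \ref{L:Halphalog} and \ref{L:valimp=}: each new generator $2^r$ of $H_{i+1}$ satisfies $\log(2^r) = r \in B_i$ with $\phi_i(r) \in \ek{H_i^{>1}} \subseteq \ek{H_{i+1}^{>1}}$, and the value-group-section property of $H_i$ propagates to $H_{i+1}$ by the same logarithmic-injectivity argument used for Lemma \ref{L:valimp=}. Once these invariants are verified at every stage, clause (2) of dyadicness holds at the stable limit by construction (any $r \in B_\lambda$ with $\phi_\lambda(r) \in \ek{H_\lambda^{>1}}$ lies in some $B_i$ with $\phi_i(r) \in \ek{H_i^{>1}}$, hence $2^r \in H_{i+1} \subseteq H_\lambda$), giving the required dyadic and maximal triple extending $(A, H, \phi)$.
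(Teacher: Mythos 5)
Your proof is correct, but it takes a more explicit route than the paper. The paper's proof is a three-line citation chain: apply Lemma~\ref{Maximal} to get a maximal value-group-preserving extension $(\hat{A}, H, \hat{\phi})$ of $(A, H, \phi)$, and then, if $\hat{A} \ne R$, invoke Proposition~\ref{main technical} on $(\hat{A}, H, \hat{\phi})$. You instead re-run the ``inner'' transfinite iteration from the proof of Proposition~\ref{main technical} directly from $(A, H, \phi)$, skipping the initial step that adjoins $y, \log y, \log\log y, \dots$ to the value group (which is only needed when one must force a prescribed $y$ into the domain; the corollary has no such $y$). The payoff of your version is that you never need the intermediate triples to be dyadic: you only carry the weaker invariant $\phi_i(\log h) \in \ek{H_i^{>1}}$ for $h \in H_i$ (the content of Lemma~\ref{L:Halphalog}), and dyadicness is only verified at the stabilizing limit, where the clause-(2) check is automatic because $B_\lambda$ is a union. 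By contrast, the paper's citation of Proposition~\ref{main technical} silently presumes that the Lemma-\ref{Maximal} extension $(\hat{A}, H, \hat{\phi})$ is itself dyadic --- i.e., that maximal value-group-preserving extension preserves clause (2) --- which is not obvious and is not argued; your approach sidesteps this entirely. One small remark: your stage (1) verification that the chain-union $(A,H,\phi)$ is itself dyadic is a clean sanity check, but clause (2) of that verification is never actually used --- your stage-(2) construction only needs clause (1) at $i=0$, and clause (2) at the stabilizing limit comes out of the iteration, not out of $(A,H,\phi)$.
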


\begin{proof}
The triple $(A, H,\phi)$ may not be maximal.  By
Lemma \ref{Maximal}, we extend  $(A, H,\phi)$ to a maximal triple $(\hat{A}, H,\hat{\phi})$.  If $\hat{A} = R$, then $(R, H,\hat{\phi})$ is a dyadic triple.    If not, take $i<\lambda$ least such that $r_i\in R-\hat{A}$.
 By Proposition  \ref{main technical}, we can extend $(\hat{A}, H,\hat{\phi})$ to a dyadic and maximal triple $(A',H',\phi')$.

\end{proof}


%



%


\section{Recursive saturation, Barwise-Kreisel Compactness, and $\Sigma$-saturation}

For our example illustrating the complexity of Ressayre's Construction, we shall use recursive saturation and a version of Compactness for computable infinitary sentences.  We also describe a different method for producing exponential integer parts.  For this we need $\Sigma$-saturation, a kind of saturation for infinitary formulas.  Recursive saturation has already come up in connection with integer parts.  In \cite{DKS}, it was shown that a countable real closed field has an integer part satisfying Peano arithmetic if and only if the real closed field is Archimedean or recursively saturated.

\subsubsection{Recursive saturation}

Recursive saturation was defined by Barwise and Schlipf \cite{BS}.

\begin{defn} [Recursive saturation]

A structure $\mathcal{A}$ is \emph{recursively saturated} if for all tuples $\overline{a}$ in $\mathcal{A}$ and all c.e.\ sets of formulas $\Gamma(\overline{a},x)$, if every finite subset of $\Gamma(\overline{a},x)$ is satisfied in $\mathcal{A}$, then some $b\in\mathcal{A}$ satisfies all of $\Gamma(\overline{a},x)$.
\end{defn}

Countable recursively saturated structures can be expanded as follows.

\begin{thm} [Barwise-Schlipf]
\label{Barwise-Schlipf}

Let $\mathcal{A}$ be a countable recursively saturated $L$-structure.  Let $\Gamma$ be a c.e.\ set of sentences, in a language $L'\supseteq L$.  If the consequences of $\Gamma$ in the language $L$ are true in $\mathcal{A}$, then $\mathcal{A}$ can be expanded to a model of $\Gamma$.

\end{thm}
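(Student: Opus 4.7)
The plan is to prove this by a Henkin-style back-and-forth construction, using the hypothesis only to guarantee that at each step the relevant partial extension is consistent with the full first-order $L$-theory of $\mathcal{A}$, and then using recursive saturation to realize the resulting recursive types inside $\mathcal{A}$ itself. Without loss of generality, I may assume $L' \setminus L$ consists of countably many new relation, function, and constant symbols, and that $\Gamma$ includes the Henkin axioms, extended by a countable family of fresh constants $c_0, c_1, \ldots$. Fix an enumeration $a_0, a_1, \ldots$ of $\mathcal{A}$.

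First, I would observe that the hypothesis "every $L$-consequence of $\Gamma$ holds in $\mathcal{A}$" is equivalent (via compactness and negation) to the statement that $\Gamma \cup \mathrm{Th}_L(\mathcal{A})$ is consistent; more generally, for any finite tuple $\bar{a}$ from $\mathcal{A}$ with types $\mathrm{tp}_L(\bar{a}/\emptyset)$, the theory $\Gamma \cup \mathrm{tp}_L(\bar{a})[\bar{c}/\bar{a}]$ is consistent in $L' \cup \{\bar{c}\}$. This is because if $\phi(\bar{c}) \in \Gamma$ forced $\lnot\psi(\bar{c})$ where $\psi(\bar{a})$ holds in $\mathcal{A}$, then $\forall\bar{x}(\phi \rightarrow \psi)$ would be an $L$-consequence of $\Gamma$ failing in $\mathcal{A}$.

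Next, I would build in $\omega$ stages a complete Henkin theory $T^* \supseteq \Gamma$ in $L' \cup \{c_i\}$ together with a bijection $\pi: \omega \to \omega$ with $c_i \leftrightarrow a_{\pi(i)}$, such that for every finite stage $n$, writing $\bar{c}$ for the constants used so far and $\bar{a} = \pi(\bar{c})$, the finite theory $T_n \cup \mathrm{tp}_L(\bar{a})$ is consistent. At even stages I handle the next sentence of $L'$ and, if existential, introduce a Henkin witness $c_j$; at odd stages I ensure the enumeration of $\mathcal{A}$ is exhausted (back-and-forth). To assign $c_j$ to an element of $\mathcal{A}$, I compute the c.e. set $\Sigma_j(x)$ of $L$-formulas (with parameters $\bar{a}$ already assigned) that $T_n$ requires of $c_j$; by the consistency condition maintained inductively, $\Sigma_j(\bar{a}, x)$ is finitely satisfiable in $\mathcal{A}$, and by recursive saturation (applied via Theorem \ref{Barwise-Schlipf}'s defining property) some $a_k \in \mathcal{A}$ realizes all of $\Sigma_j$; pick the least unused such $k$ and set $\pi(j)=k$.

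The main obstacle is verifying that the c.e. type $\Sigma_j(\bar{a},x)$ really is recursive in the sense required, and that its finite satisfiability is preserved at each step — in particular, when Henkin-completing a sentence the newly committed $L$-consequences must remain in $\mathrm{tp}_L(\bar{a})$. This is handled by the consistency observation of the first paragraph applied relative to the current finite parameter tuple: any purely $L$-formula in $\bar{a}, x$ that $T_{n+1}$ proves about $x$ is, by the Henkin-freshness of witnesses, an $L$-consequence of $\Gamma$ together with $\mathrm{tp}_L(\bar{a})$, hence finitely satisfiable in $\mathcal{A}$. Once $T^*$ and $\pi$ are built, interpret each new symbol of $L'$ on $\mathcal{A}$ according to the atomic $L'$-diagram given by $T^*$ via $\pi$; this yields the desired expansion.
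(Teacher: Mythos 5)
Your Henkin-plus-recursive-saturation argument is essentially the standard Barwise--Schlipf proof; the paper itself cites this theorem to \cite{BS} without proof, but its Proof Sketch of Proposition~\ref{B-S complexity} carries out the same construction (recursively enumerate sentences and c.e.\ types, decide sentences while maintaining consistency with the $L$-type of the assigned parameters, and use recursive saturation to realize the resulting c.e.\ type of each Henkin witness). Your write-up matches that approach; the only thing to tighten is that the running consistency invariant should explicitly be ``$\Gamma\cup T_n\cup\mathrm{tp}_L(\bar a)$ is consistent'' (not just $T_n\cup\mathrm{tp}_L(\bar a)$, which is infinite anyway), so that later $\Gamma$-sentences cannot conflict with earlier commitments.
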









In \cite{MM} Macintyre and Marker considered the complexity of recursively saturated models.
We shall need the following result.

\begin{thm} [Macintyre-Marker]
\label{MM}

Suppose $E$ is an enumeration of a countable Scott set $\mathcal{S}$.  Let $T$ be a complete theory in $\mathcal{S}$.  Then $T$ has a recursively saturated model $\mathcal{A}$ such that $D^c(\mathcal{A})\leq_T E$.

\end{thm}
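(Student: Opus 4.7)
The plan is to run a Henkin-style construction that is driven by $E$ at every stage. First, I expand the language $L$ of $T$ to $L^* = L \cup \{c_0,c_1,\dots\}$ and I enumerate $L^*$-sentences as $(\varphi_s)_{s\in\omega}$ and c.e.\ collections of formulas with a single free variable $y$ and finitely many constant-parameters as $(\Gamma_s(\bar c_s,y))_{s\in\omega}$. I will build a chain of finite, consistent sets $T_0\subseteq T_1\subseteq\dots$ of $L^*$-sentences whose union $T^*$ is a complete Henkin theory extending $T$ and is decided uniformly in $E$. The required model $\mathcal{A}$ is then the term model of $T^*$ modulo provable equality; by the Henkin property its complete diagram is just $T^*$ (after replacing each closed term by the appropriate constant), so $D^c(\mathcal{A})\le_T T^*\le_T E$.

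The key technical point is that, because $T\in\mathcal{S}$ and $\mathcal{S}$ is a Scott set, the set
\[
\mathrm{Cons}(T,X) \;=\; \{\sigma : T\cup X\cup\{\sigma\}\text{ is consistent}\}
\]
is again in $\mathcal{S}$ whenever $X\in\mathcal{S}$ is finite; so, given a current approximation $T_s\in\mathcal{S}$, I can locate, using $E$, a code for a one-step consistent extension that meets the next requirement. At stage $s$, I perform three actions in order: (i) add a Henkin witness $\exists x\,\psi(x)\to \psi(c_{n})$ for a fresh $c_n$ whenever the $s$-th existential sentence is (after stage $s-1$) consistent with $T_{s-1}$; (ii) add $\varphi_s$ or $\lnot\varphi_s$, choosing whichever is consistent with $T_{s-1}$ (the choice is decidable in $\mathcal{S}$, hence locatable by $E$); and (iii) handle recursive saturation, as described next. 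Each operation preserves membership of $T_s$ in $\mathcal{S}$, and the index of $T_s$ in $E$ is computed uniformly from the index of $T_{s-1}$.

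The recursive-saturation step is where the Scott-set assumption really earns its keep. For the c.e.\ set $\Gamma_s(\bar c_s,y)$, ask whether the partial type $\Gamma_s(\bar c_s,y)\cup T_{s-1}$ is finitely satisfiable. Since $\Gamma_s$ is c.e.\ and $T_{s-1}\in\mathcal{S}$, this question is answered by an element of $\mathcal{S}$, and therefore by $E$. If the answer is yes, pick a fresh constant $c_m$ and add all formulas $\psi(\bar c_s,c_m)$ for $\psi\in\Gamma_s$ to $T_s$; consistency of the enlarged set is again witnessed by a path in a $\mathcal{S}$-tree of finite consistent extensions, and Scott-set closure supplies such a path computably in $E$. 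If the answer is no, $T_s:=T_{s-1}$ together with any pending Henkin or completeness obligations. In the end, every finitely satisfiable c.e.\ type over a finite tuple from $\mathcal{A}$ is realized, so $\mathcal{A}$ is recursively saturated.

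The main obstacle, and the one that forces the Scott-set hypothesis, is carrying out items (ii) and (iii) uniformly in $E$: deciding consistency of a c.e.\ theory with the running approximation is a $\Pi^0_1$-in-$T_{s-1}$ question, and one must be sure a coherent sequence of correct answers exists inside $\mathcal{S}$. This is exactly the content of König-style closure for Scott sets applied to the tree of consistent finite extensions of $T\cup\{\text{Henkin axioms}\}$: a path through that tree lies in $\mathcal{S}$, and with $E$ we can walk along such a path one node at a time. Once this is in place, the rest of the argument is a routine Henkin verification and a bookkeeping check that each $T_s$, and hence $T^* = \bigcup_s T_s$ and $D^c(\mathcal{A})$, is computable from $E$.
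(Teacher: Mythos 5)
The statement is cited from Macintyre--Marker and is not reproved in the paper, so I am evaluating your argument on its own terms. The overall architecture (Henkin construction guided by a Scott set, with a weak K\"onig's lemma argument lurking in the background) is the right one, but the proposal has a genuine gap precisely where you acknowledge ``the Scott-set assumption really earns its keep.''

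The gap is the claim that, at each stage $s$, the question \emph{``is $\Gamma_s(\bar c_s,y)\cup T_{s-1}$ finitely satisfiable modulo $T$?''} can be decided from $E$. That question is $\Pi^0_1$ relative to $T\oplus T_{s-1}$; Scott sets are closed under Turing join, Turing reducibility, and WKL, but \emph{not} under the Turing jump, so there is no reason an $\mathcal{S}$-oracle (let alone $E$) can answer it. The sentence ``this question is answered by an element of $\mathcal{S}$, and therefore by $E$'' slides between ``the one-bit answer, being a finite object, is trivially computed from something in $\mathcal{S}$'' and ``there is a uniform $E$-computable procedure producing the correct bit at every stage.'' Only the latter would let your stage-by-stage construction run, and it is not justified. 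The same objection applies to the completeness step (ii): ``whichever is consistent with $T_{s-1}$'' is again a $\Pi^0_1$ choice. (There is also a smaller internal inconsistency: you declare the $T_s$ finite, but in step (iii) you add the entire c.e.\ set $\Gamma_s(\bar c_s, c_m)$, which is generally infinite.)

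Your final paragraph points at the correct repair---route everything through a single tree and invoke WKL once---but the tree you name, ``the tree of consistent finite extensions of $T\cup\{\text{Henkin axioms}\}$,'' is not the right one: its paths give complete consistent Henkin theories, hence models of $T$, but nothing about those paths forces recursive saturation. The tree has to carry the saturation requirements explicitly. A node at level $t$ should record, for each $j\le t$, a commitment for $\Gamma_j$: either a finite $F_j\subseteq\Gamma_j$ together with the sentence $\neg\exists x\bigwedge F_j(\bar c_j,x)$, or a designated constant $c_{m_j}$ together with $\psi(\bar c_j,c_{m_j})$ for the first $t$ formulas $\psi$ enumerated into $\Gamma_j$. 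Consistency with $T$ then prunes the tree, which is $\Pi^0_1$ in $T$; the ``always act correctly'' branch witnesses infinitude; WKL gives a path $P\in\mathcal{S}$, hence $P=E(i_0)$ for some $i_0$; and $P$ codes a complete consistent Henkin theory whose term model is recursively saturated, with $D^c(\mathcal{A})\le_T P\le_T E$. Note this reduction need not be uniform in the requirement indices, which is why one should apply WKL once to a master tree rather than trying to decide each $\Pi^0_1$ question in sequence as your stages (ii) and (iii) attempt to do.
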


The next result may be well-known.  The proof will be obvious to anyone familiar with the proof of Theorem \ref{Barwise-Schlipf}.

\begin{prop}
\label{B-S complexity}

Suppose $\mathcal{A}$ is a countable recursively saturated structure, say with universe $\omega$, and let $\Gamma$ be a c.e.\ set of finitary sentences, in an expanded language, such that the consequences of $\Gamma$ are all true in $\mathcal{A}$.  Then $\mathcal{A}$ can be expanded to a model $\mathcal{A}'$ of $\Gamma$ such that $D^c(\mathcal{A}')$ is computable in the jump of $D^c(\mathcal{A})$.

\end{prop}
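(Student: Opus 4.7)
The plan is to carry out the standard Henkin-style proof of Theorem \ref{Barwise-Schlipf}, tracking the arithmetic complexity of each choice so that the whole construction can be executed using an oracle for $D^c(\mathcal{A})'$. Enumerate the sentences of $L'(\omega)$ as $\theta_0,\theta_1,\dots$ in some effective way. I build a chain of finite sets of $L'(\omega)$-sentences $\Delta_0\subseteq\Delta_1\subseteq\cdots$ preserving the invariant that $\Gamma\cup\Delta_n$ is \emph{consistent with $\mathcal{A}$}, meaning there is an $L'$-expansion $\mathcal{A}^{\ast}$ of $\mathcal{A}$ modeling $\Gamma\cup\Delta_n$ in which each parameter $b\in\omega$ occurring in $\Delta_n$ is interpreted as itself. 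At stage $n$ I add whichever of $\theta_n$ or $\neg\theta_n$ preserves consistency; if the chosen sentence has the form $\exists x\,\psi(x,\bar b)$, I also adjoin a Henkin witness $\psi(c,\bar b)$ for some $c\in\omega$. The expansion $\mathcal{A}'$ is then read off the atomic sentences in $\bigcup_n\Delta_n$.

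The main step is the complexity calculation for the predicate ``$\Gamma\cup\Delta$ is consistent with $\mathcal{A}$.'' I claim it is equivalent to the condition that every $L(\bar b)$-sentence $\sigma(\bar b)$ with $\Gamma\cup\Delta\vdash\sigma(\bar b)$ is true in $\mathcal{A}$. The forward direction is immediate. For the reverse, note that $(\mathcal{A},\bar b)$ is recursively saturated in the expanded language with names for $\bar b$, and $\Gamma\cup\Delta$ is c.e.\ in that language; the hypothesis says precisely that its $L(\bar b)$-consequences hold in $(\mathcal{A},\bar b)$, so Theorem \ref{Barwise-Schlipf} produces the required expansion. Now ``$\Gamma\cup\Delta\vdash\sigma(\bar b)$'' is $\Sigma^0_1$ (since $\Gamma$ is c.e.\ and $\Delta$ is finite) and ``$\mathcal{A}\models\neg\sigma(\bar b)$'' is computable in $D^c(\mathcal{A})$, so consistency with $\mathcal{A}$ is $\Pi^0_1$ in $D^c(\mathcal{A})$, hence decidable in $D^c(\mathcal{A})'$.

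Given this, the stagewise choices run in $D^c(\mathcal{A})'$: a single jump query decides which of $\theta_n, \neg\theta_n$ to adjoin. For the Henkin witness, when $\exists x\,\psi(x,\bar b)$ enters $\Delta_{n+1}$, the c.e.\ set of $L(\bar b)$-formulas $\sigma(x,\bar b)$ with $\Gamma\cup\Delta_n\cup\{\psi(x,\bar b)\}\vdash\sigma(x,\bar b)$ is a type that is finitely satisfiable in $\mathcal{A}$ (as $\Gamma\cup\Delta_n\cup\{\exists x\,\psi\}$ is consistent with $\mathcal{A}$); by recursive saturation it is realized by some $c\in\omega$, which in turn makes $\Gamma\cup\Delta_n\cup\{\psi(c,\bar b)\}$ consistent with $\mathcal{A}$ by the characterization above. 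I locate such a $c$ by searching through $\omega$ and applying the $\Pi^0_1$ consistency test (one jump query per candidate). The main obstacle, beyond reducing everything to Theorem \ref{Barwise-Schlipf} in the parameterized form, is simply verifying that witness search introduces no complexity beyond a single jump per candidate, which is immediate from the $\Pi^0_1$ characterization. Thus $\bigcup_n\Delta_n$ is complete and Henkin, and the expansion $\mathcal{A}'$ it defines satisfies $D^c(\mathcal{A}')\leq_T D^c(\mathcal{A})'$.
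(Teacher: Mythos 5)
Your argument is correct and follows essentially the same route as the paper's proof sketch: a Henkin construction over $D^c(\mathcal{A})'$, with the key observation that the condition ``every $L$-consequence of a c.e.\ theory (with parameters) is true in $\mathcal{A}$'' is $\Pi^0_1(D^c(\mathcal{A}))$ and hence a single-jump query, together with the Barwise--Schlipf theorem applied to $(\mathcal{A},\bar b)$ to convert this condition into actual existence of an expansion. The paper's sketch maintains a c.e.\ set $\Sigma_s$ and also explicitly enumerates c.e.\ types $\Gamma(\bar a,x)$ to realize, whereas you keep $\Gamma$ separate and only adjoin Henkin witnesses to finite $\Delta_n$; both bookkeepings give the same complexity bound, and yours is a legitimate and slightly leaner instantiation of the same method.

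One small remark: at the witness step, your reduction of ``$c$ realizes the type $\Rightarrow \Gamma\cup\Delta_n\cup\{\psi(c,\bar b)\}$ is consistent with $\mathcal{A}$'' looks like it needs $c$ to be fresh so that generalization applies, but in fact it works even when $c$ coincides with some $b_i$: the $L$-formula $x=b_i\to\tau(x,\bar b)$ lies in the type whenever $\tau(b_i,\bar b)$ is a consequence of $\Gamma\cup\Delta_n\cup\{\psi(b_i,\bar b)\}$, and a realizer $c=b_i$ therefore already forces $\mathcal{A}\models\tau(b_i,\bar b)$. Alternatively one can avoid the issue entirely by observing that the witness $c^{\ast}$ supplied by the expansion $\mathcal{A}^{\ast}$ directly passes the consistency test, so the search terminates; either way the argument is sound.
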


\begin{proof} [Proof Sketch]

We carry out a Henkin construction, as Barwise and Schlipf did, and we observe that the jump of
$D^c(\mathcal{A})$ is sufficient.
We make a recursive list of the sentences $\varphi(\overline{a})$ in the expanded language,
with names for the elements of $\omega$.  We also make a recursive list of the c.e.\ sets $\Gamma(\overline{a},x)$.  At each stage $s$, we have put into $D^c(\mathcal{A}')$ a c.e.\ set $\Sigma_s(\overline{a})$ of sentences involving a finite tuple of constants, such that the consequences in the language of $\mathcal{A}$ are true in $R$ of the constants $\overline{a}$.  At stage $s+1$, we consider the next sentence $\varphi(\overline{a})$.  We add $\varphi(\overline{a})$ to $\Sigma_s(\overline{a})$ if our consistency condition is satisfied, and otherwise we add the negation.  Then we consider the next c.e.\ set $\Gamma(\overline{a},x)$.  To check consistency, we see if the consequences of adding this, with some new constant $e$ for $x$, are true of $\overline{a}$.  Then we look for $b$ such that for $b = x$, the consequences are satisfied by $\overline{a},b$.

\end{proof}

\subsubsection{Compactness for infinitary logic}

Kripke-Platek set theory ($KP$) differs from $ZFC$ in that the power set axiom is dropped, and the separation and replacement axioms are restricted to formulas with bounded quantifiers.  An \emph{admissible set} is a model of $KP$ that is \emph{standard}; i.e., the epsilon relation is the usual one and the model forms a transitive set.  If $A$ is an admissible set, and $B\subseteq A$, then $B$ is \emph{$\Sigma_1$ on $A$} if it is defined by an existential formula, possibly with parameters.  A set is \emph{$A$-finite} if it is an element of $A$.  The least admissible set is $A = L_{\omega_1^{CK}}$.  In this case, a set $B\subseteq\omega$ is $\Sigma_1$ on $A$ if it is $\Pi^1_1$, and it is $A$-finite if it is hyperarithmetical.  

For a countable language $L$, there are uncountably many formulas of $L_{\omega_1\omega}$.  For a countable admissible set $A$, the \emph{admissible fragment $L_A$} consists of the $L_{\omega_1\omega}$ formulas that are elements of $A$.  In the case where $A$ is the least admissible set, the $L_A$-formulas are essentially the computable infinitary formulas.

\begin{thm} [Barwise Compactness]\label{T:BarwiseCompact}

Let $A$ be a countable admissible set, and let $L$ be an $A$-finite language.  Suppose $\Gamma$ is a set of $L_A$-sentences that is $\Sigma_1$ on $A$.  If every $A$-finite subset of $\Gamma$ has a model, then $\Gamma$ has a model.

\end{thm}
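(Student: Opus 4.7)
The plan is to follow Barwise's original argument, which reduces compactness to a completeness theorem for the admissible fragment $L_A$ via a Henkin-style construction that is carried out inside $A$.

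First, I would extend the language $L$ by adjoining an $A$-finite set $C$ of new Henkin constants (indexed, say, by $\omega\in A$) and augment $\Gamma$ with the usual Henkin axioms $\exists x\,\varphi(x)\to\varphi(c_\varphi)$, one for each $L_A(C)$-formula $\varphi(x)$. The resulting theory $\Gamma^H$ is still $\Sigma_1$ on $A$ (the assignment $\varphi\mapsto c_\varphi$ is $\Sigma_1$, and the set of Henkin axioms is definable by a $\Sigma_1$-formula), and every $A$-finite subset of $\Gamma^H$ still has a model, since only $A$-finitely many new Henkin axioms appear in any such subset and witnesses can be interpreted in any model of the corresponding fragment of the original $\Gamma$.

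Second, I would introduce a formal proof system for $L_A$ extending finitary first-order logic with infinitary conjunction/disjunction rules: from an $A$-finite family of proofs of $\varphi_i$ conclude $\bigwedge_{i\in I}\varphi_i$, and the dual rule for $\bigvee$. A proof is an $A$-finite well-founded derivation tree, so the provability relation $\Gamma\vdash\varphi$ is $\Sigma_1$ on $A$ whenever $\Gamma$ is. Soundness is routine. The decisive step is the completeness theorem: every $\Sigma_1$-consistent Henkin theory $\Gamma^H$ has a term model. To prove this, I would use $\Sigma_1$-recursion along an $A$-finite enumeration of $L_A(C)$-sentences to build a maximal $\Sigma_1$-consistent extension $\Gamma^*\supseteq\Gamma^H$, at each stage choosing between $\varphi_n$ and $\neg\varphi_n$ the option whose adjunction preserves $\Sigma_1$-consistency; $\Sigma_1$-replacement and $\Delta_0$-separation (guaranteed by admissibility of $A$) are exactly what makes this transfinite recursion legitimate inside $A$. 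The canonical term model of $\Gamma^*$ then satisfies every sentence in $\Gamma^H$, including those with infinitary connectives, by induction on formula complexity using the Henkin axioms for the quantifier case.

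Third, I would close the loop: if $\Gamma$ (equivalently $\Gamma^H$) had no model, then by completeness it would be inconsistent, so there would exist an $A$-finite proof of $\bot$ from $\Gamma^H$. Such a proof mentions only an $A$-finite subset $\Gamma_0\subseteq\Gamma$ together with $A$-finitely many Henkin axioms; by soundness $\Gamma_0$ has no model, contradicting the hypothesis. Hence $\Gamma$ has a model.

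The main obstacle is the completeness theorem for $L_A$, and inside it the verification that the Lindenbaum-style completion can be performed while remaining $\Sigma_1$ on $A$. This is where admissibility is used essentially: $\Sigma_1$-recursion on an $A$-finite index set, together with the observation that the consistency predicate for $\Sigma_1$-theories is $\Pi_1$, are precisely the ingredients available in $KP$ but unavailable in weaker set-theoretic settings. All other steps (introduction of Henkin constants, building the term model, extracting an $A$-finite proof from an inconsistency derivation) are formal manipulations that go through as in the countable first-order case once the recursion is in place.
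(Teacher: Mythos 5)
The paper does not prove Barwise Compactness; it is stated as a known background result (due to Barwise), so there is no in-paper proof to compare against, and your proposal must be judged on its own terms.

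Your overall blueprint --- Henkinize, set up an $L_A$ proof system with $\Sigma_1$ provability, prove completeness, then deduce compactness from the fact that an inconsistency proof is an $A$-finite object --- is the standard one. But the central step as you describe it has a genuine gap. You propose to build a maximal consistent extension $\Gamma^*$ ``by $\Sigma_1$-recursion along an $A$-finite enumeration of $L_A(C)$-sentences.'' No such enumeration exists: the collection of $L_A(C)$-sentences is a $\Delta_1$-definable subclass of $A$ whose ordinal rank is unbounded in $A$, so it is not an element of $A$ and hence not $A$-finite (for $A = L_{\omega_1^{CK}}$ it is a proper $\Pi^1_1$ set). Consequently the proposed Lindenbaum recursion cannot be performed inside $A$ by $\Sigma_1$-replacement, and the set $\Gamma^*$ you would produce is not an object to which admissibility gives you any handle. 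The standard repair is to run the Henkin/Hintikka construction \emph{externally}: use the countability of $A$ (which you are given) to enumerate $L_A(C)$-sentences in order type $\omega$ outside $A$, make the finite-stage consistency decisions using only the $\Sigma_1$-ness of provability, and take the union. Equivalently, one can route the argument through Barwise's Model Existence Theorem for consistency properties, which packages exactly this external construction. In either version one must also close the construction under the infinitary witnessing condition: whenever $\bigvee_{i\in I}\varphi_i$ is put into $\Gamma^*$, some particular disjunct $\varphi_i$ must also be put in, and dually for existentials beyond the Henkin axioms; you omit this, and without it the term model need not satisfy the infinitary sentences of $\Gamma$.

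Your third paragraph, by contrast, is exactly right and is where admissibility earns its keep: a derivation of $\bot$ from $\Gamma^H$ is a single $A$-finite object, so by $\Sigma_1$-reflection it mentions only an $A$-finite subtheory, contradicting the hypothesis that every $A$-finite subset of $\Gamma$ has a model. Once the completeness step is repaired along the lines above, the argument goes through.
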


As a special case, we have the following.

\begin{thm} [Barwise-Kreisel Compactness]

Let $L$ be a computable language.  Suppose $\Gamma$ is a $\Pi^1_1$ set of computable infinitary $L$-sentences.  If every hyperarithmetical subset of $\Gamma$ has a model, then $\Gamma$ has a model.

\end{thm}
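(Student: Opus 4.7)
The plan is to derive Barwise--Kreisel Compactness as a straightforward specialization of Barwise Compactness (Theorem \ref{T:BarwiseCompact}) to the least admissible set $A = L_{\omega_1^{CK}}$. The paragraph preceding the statement already records the three dictionary entries I need: (i) a computable language $L$ is $A$-finite for this $A$; (ii) a subset of $\omega$ is $\Sigma_1$ on $A$ iff it is $\Pi^1_1$; and (iii) a subset of $\omega$ is $A$-finite iff it is hyperarithmetical. Likewise, the $L_A$-formulas for this choice of $A$ are precisely the computable infinitary $L$-formulas. So the entire proof should consist of invoking these identifications and then quoting Barwise Compactness.

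Concretely, I would proceed as follows. First, set $A = L_{\omega_1^{CK}}$ and observe that $A$ is a countable admissible set and that the given computable language $L$ is $A$-finite. Next, note that a set $\Gamma$ of computable infinitary $L$-sentences is automatically a set of $L_A$-sentences, because each such sentence (as an object coded by an element of $\omega$, say, or as a well-founded tree of finite formulas) is hyperarithmetical and therefore an element of $A$. Now translate the hypothesis: $\Gamma$ is $\Pi^1_1$, hence $\Sigma_1$ on $A$, and every $A$-finite (i.e.\ hyperarithmetical) subset of $\Gamma$ has a model. These are exactly the hypotheses of Theorem \ref{T:BarwiseCompact}, so $\Gamma$ has a model.

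There is essentially no mathematical obstacle here; the content of the proof is entirely the identification of the relevant notions on $L_{\omega_1^{CK}}$. The only step that requires any care is verifying that each individual computable infinitary sentence of $L$ really is an element of $L_{\omega_1^{CK}}$, which follows from the fact that the computable infinitary formulas are exactly those with a hyperarithmetical Scott-style parse tree, and hyperarithmetical sets are $A$-finite for this $A$. Once that identification is granted, the conclusion is immediate from Barwise Compactness.
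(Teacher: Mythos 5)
Your proposal is correct and matches the paper exactly: the paper presents Barwise--Kreisel Compactness as a special case of Barwise Compactness, preceded by precisely the dictionary you invoke ($A = L_{\omega_1^{CK}}$, $\Sigma_1$ on $A$ corresponds to $\Pi^1_1$, $A$-finite corresponds to hyperarithmetical, $L_A$-formulas are the computable infinitary formulas). The paper gives no further argument, so your slightly more explicit write-up is the intended proof.
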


Ressayre's notion of $\Sigma_A$-saturation, defined in \cite{R0}, \cite{R1} is associated with Barwise Compactness.  We start with an admissible set $A$.  Some people omit the axiom of infinity from $KP$.  Then $L_\omega$ qualifies as an admissible set, and we get recursive saturation as a special case, where $A = L_\omega$.  Ressayre worked independently of Barwise and Schlipf, and the first version of his definition, in \cite{R0}, was actually earlier.

\begin{defn}

Suppose $A$ is a countable admissible set and let $L$ be an $A$-finite language.  An $L$-structure $\mathcal{A}$ is \emph{$\Sigma_A$-saturated} if

\begin{enumerate}

\item  for any tuple $\overline{a}$ in $\mathcal{A}$ and any set $\Gamma$ of $L_A$-formulas, with parameters
$\overline{a}$ and free variable $x$, if $\Gamma$ is $\Sigma_1$ on $A$ and every $A$-finite subset is satisfied, then the whole set is satisfied.

\item  let $I$ be an $A$-finite set, and let $\Gamma$ be a set, $\Sigma_1$ on $A$, consisting of pairs $(i,\varphi)$, where $i\in I$ and $\varphi$ is an $L_A$-sentence.  For each $i$,
 let $\Gamma_i = \{\varphi:(i,\varphi)\in\Gamma\}$.  Similarly, if $\Gamma'\subseteq\Gamma$, let $\Gamma'_i = \{\varphi:(i,\varphi)\in\Gamma'\}$.  If for each $A$-finite $\Gamma'\subseteq\Gamma$, there is some $i$ such that all sentences in $\Gamma'_i$ are true in $\mathcal{A}$, then there is some $i$ such that all sentences in $\Gamma_i$ are true in $\mathcal{A}$.

\end{enumerate}

\begin{prop}\label{P:SigmaSat}

A countable structure $\mathcal{A}$ is $\Sigma_A$-saturated iff it lives in a fattening of $A$, with no new ordinals.

\end{prop}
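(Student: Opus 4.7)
The plan is to prove each direction separately: the ``fattening implies saturation'' direction reduces to $\Sigma$-replacement applied inside the given fattening, while the ``saturation implies fattening'' direction constructs the fattening as the admissible closure of $A\cup\{\mathcal A\}$ and uses condition (2) of $\Sigma_A$-saturation to show no new ordinals appear.

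For the first direction, suppose $\mathcal A\in A'$ with $A'$ admissible and $\mathrm{Ord}(A')=\mathrm{Ord}(A)$. To verify condition (1), assume toward a contradiction that a $\Sigma_1(A)$ family $\Gamma$ has every $A$-finite subset realized in $\mathcal A$ but no single $b\in\mathcal A$ realizes all of $\Gamma$. Stratify $\Gamma=\bigcup_{\alpha\in\mathrm{Ord}(A)}\Gamma_\alpha$ as a canonical increasing union of $A$-finite approximations, available since $\Gamma$ is $\Sigma_1$ on $A$. For each $b\in\mathcal A$, let $\alpha_b$ be the least $\alpha$ such that some formula in $\Gamma_\alpha$ fails at $b$. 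Since $\mathcal A\in A'$, satisfaction in $\mathcal A$ is $\Delta_1$ on $A'$, so $b\mapsto\alpha_b$ is a total $\Sigma_1(A')$ function on the $A'$-finite set $\mathcal A$. By $\Sigma$-replacement in $A'$, its range is bounded by some $\alpha^{*}<\mathrm{Ord}(A')=\mathrm{Ord}(A)$; then the $A$-finite set $\Gamma_{\alpha^{*}}$ is not realized in $\mathcal A$, contradicting the hypothesis. Condition (2) is handled by the same stratification idea applied to the two-sorted index $(i,\varphi)$, defining $\alpha_i$ as the least stage at which the slice $\Gamma_i$ first fails in $\mathcal A$ and bounding the resulting total function on the $A$-finite set $I$.

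For the second direction, take $A'$ to be the admissible closure of $A\cup\{\mathcal A\}$, built inside the constructibility hierarchy relative to $\mathcal A$ up to height $\mathrm{Ord}(A)$; admissibility and $\mathcal A\in A'$ are automatic. The substantive point is that the construction introduces no new ordinals, equivalently that $\Sigma$-replacement is already satisfied at height $\mathrm{Ord}(A)$. Unwinding a candidate unbounded $\Sigma_1(A')$ function $F\colon I\to\mathrm{Ord}(A')$ on an $A$-finite set $I$ in terms of the original parameters from $A$ and $\mathcal A$ presents the statement ``$F(i)<\alpha$'' as a $\Sigma_1(A)$ family of $L_A$-formulas in parameters from $\mathcal A$; condition (2) of $\Sigma_A$-saturation then lifts the local uniform boundedness of such a family into global boundedness, keeping the range of $F$ below $\mathrm{Ord}(A)$.

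The main obstacle is the second direction, specifically the coding step that reinterprets each $\Sigma_1$-unbounded family over $A'$ as a $\Sigma_1(A)$ family of $L_A$-formulas to which condition (2) applies. This is essentially the content of Ressayre's correspondence between $\Sigma$-saturation and Barwise compactness for admissible fragments; once that dictionary is in place, the rest is a standard admissibility calculation, and in particular no further use of $\mathcal A$ being countable is needed beyond ensuring that $L_A$ is a countable fragment.
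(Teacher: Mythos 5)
The paper does not actually prove Proposition~\ref{P:SigmaSat}; it appears inside the definition block for $\Sigma_A$-saturation with no argument attached, and is being invoked as a known result of Ressayre from \cite{R0}, \cite{R1}. So there is no ``paper's proof'' to compare against. What you have written is therefore evaluated on its own.

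Your high-level plan is the standard one (replacement in the fattening gives saturation; $\Sigma_A$-saturation is what is needed to make the $L$-hierarchy over $\mathcal A$ close off at height $\mathrm{Ord}(A)$), but both directions leave real work unacknowledged. In the forward direction, the step ``$b\mapsto\alpha_b$ is a total $\Sigma_1(A')$ function'' is not automatic. The stratification $\Gamma=\bigcup_\alpha\Gamma_\alpha$ is computed \emph{inside $A$}: $\Gamma_\alpha=\{\varphi:A\models\exists z\,(\mathrm{rank}(z)<\alpha\wedge\theta(z,\varphi,p))\}$. But $A$ is not an element of $A'$ --- since $\mathrm{Ord}(A')=\mathrm{Ord}(A)$, $A$ has rank $\mathrm{Ord}(A')$ and is a proper class from the point of view of $A'$ --- so ``$z\in A$'' is not a bounded quantifier available over $A'$, and the map $\alpha\mapsto\Gamma_\alpha$ is not obviously $\Sigma_1$ on $A'$. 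Simply replacing $\Gamma$ by $\Gamma^{A'}=\{\varphi:A'\models\exists z\,\theta\}$ does not help either, because the hypothesis only gives satisfiability of $A$-finite subsets of $\Gamma$, not of the possibly larger $\Gamma^{A'}$, and the $A'$-finite set produced by $\Sigma$-collection may fail $b$ only via formulas in $\Gamma^{A'}\setminus\Gamma$. You need some resolvability of $A$ inside $A'$ (e.g.\ $A=L_\kappa$ or $A=L_\kappa(\mathrm{TC}(\{a\}))$, so that $A=\bigcup_{\alpha<\kappa}L_\alpha(\cdots)$ is uniformly $\Delta_1$ over any admissible end-extension with the same ordinals); this holds in the case $A=L_{\omega_1^{CK}}$ that the paper actually uses, but it is a hypothesis your argument silently relies on.

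In the reverse direction, you have essentially deferred the whole content. Writing $A'=L_{\mathrm{Ord}(A)}(\mathrm{TC}(\{\mathcal A\})\cup A)$ and asserting that $\Sigma$-replacement ``unwinds'' to a $\Sigma_1(A)$ family of $L_A$-sentences to which condition (2) applies is precisely the technical heart of Ressayre's theorem, and it requires a concrete coding: one must show that membership, rank, and $\Delta_0$-satisfaction for sets in $L_\alpha(\mathrm{TC}(\{\mathcal A\}))$ are uniformly expressible by $L_A$-formulas about $\mathcal A$ with ``complexity'' $\alpha$, definable by a $\Sigma_1$ recursion on ordinals of $A$, so that an unbounded $\Sigma_1(A')$ function on an $A'$-finite set translates into a family violating condition (2). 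Acknowledging this as ``Ressayre's dictionary'' and stopping is fine as a roadmap, but it is not a proof; the translation lemma is exactly what would need to be supplied.
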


\end{defn}

Countable $\Sigma$-saturated models have the property of expandability.

\begin{thm} [Ressayre]\label{T:SigmaSat}

Suppose $\mathcal{A}$ is a countable $\Sigma_A$-saturated $L$-structure.  Let $L'\supseteq L$, and let $\Gamma$ be a set of $L'_A$-sentences, $\Sigma_1$ on $A$, s.t.\ the consequences of $\Gamma$, in the language $L$, are all true in $\mathcal{A}$.  Then $\mathcal{A}$ has an expansion satisfying $\Gamma$.  Moreover, we may take the expansion to be $\Sigma_A$-saturated.

\end{thm}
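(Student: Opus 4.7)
The plan is to imitate the Henkin-style proof of Theorem \ref{Barwise-Schlipf}, with $\Sigma_A$-saturation of $\mathcal{A}$ playing the role of recursive saturation and the two clauses of its definition supplying the analogue of compactness. Expand $L'$ by countably many fresh Henkin constants and fix (i) a $\Sigma_1$-on-$A$ enumeration $\Gamma = \bigcup_s \Gamma_s$ by $A$-finite pieces, (ii) an enumeration of all $L'_A$-sentences with parameters from $\mathcal{A}$ and the Henkin constants, and (iii) a universal enumeration of $\Sigma_1$-on-$A$ types in $L'_A$ over $\mathcal{A}$ for the moreover clause. We build a complete Henkin theory $T^* \supseteq \Gamma$ together with an assignment of each Henkin constant to an element of $\mathcal{A}$, so that the $L$-reduct of $T^*$ holds in $\mathcal{A}$ of the assigned parameters. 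Interpreting the new symbols of $L' \setminus L$ accordingly then yields the required expansion $\mathcal{A}'$ on the universe of $\mathcal{A}$.

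At each stage $s$ we maintain an $A$-finite fragment $T_s$ with the invariant that every $L_A$-consequence of $T_s$ is true in $\mathcal{A}$ of the assigned parameters. We interleave four actions: (a) absorb $\Gamma_s$; (b) decide the $s$-th $L'_A$-sentence $\sigma_s$; (c) supply a Henkin witness for the $s$-th committed existential sentence; and (d) for the moreover clause, arrange to realize the $s$-th $\Sigma_1$-on-$A$ type if it is consistent with $T_s$. Clause (1) of $\Sigma_A$-saturation is used for the witness steps (c) and (d): the set of $L_A$-formulas in a new variable $x$ forced by $T_s \cup \{\psi(x)\}$ is $\Sigma_1$ on $A$ with parameters from $\mathcal{A}$, the invariant guarantees that every $A$-finite subtype is realized in $\mathcal{A}$, so a single $b \in \mathcal{A}$ realizes the whole type and can be taken as the witness. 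Clause (2) underlies step (b) and any stage in which we must choose among several consistent extensions: packaging the alternatives as pairs $(i,\varphi)$ with $i$ ranging over an $A$-finite index set of options, clause (2) guarantees that some option preserves the invariant.

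The main obstacle is the bookkeeping: at every stage one must verify that the set of $L_A$-formulas forced by $T_s$ about the current parameters is uniformly $\Sigma_1$ on $A$, so that the two clauses of $\Sigma_A$-saturation actually apply, and that the construction as a whole remains $\Sigma_1$ on $A$. Granted this, $T^* = \bigcup_s T_s$ is a complete consistent Henkin theory containing $\Gamma$, and the assignment defines an expansion $\mathcal{A}'$ satisfying $\Gamma$. The moreover clause, namely that $\mathcal{A}'$ is itself $\Sigma_A$-saturated, follows from task (d) together with a parallel argument applied to the pair form (clause (2)) of the definition, carried out in the enumeration of $(i,\varphi)$-schemes. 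Alternatively, one may shortcut the whole construction via Proposition \ref{P:SigmaSat}: $\mathcal{A}$ lives in a fattening $A^*$ of $A$ with no new ordinals, and Barwise compactness inside $A^*$ applied to $\Gamma$ together with the $L_{A^*}$-diagram of $\mathcal{A}$ produces the expansion, with the $\Sigma_A$-saturation of $\mathcal{A}'$ again arising from the closure properties of $A^*$.
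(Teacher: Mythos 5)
The paper states this theorem without proof, attributing it to Ressayre and citing \cite{R0}, \cite{R1}; there is no in-text argument to compare against, so I assess your sketch on its own terms. Your Henkin-style construction, mimicking the Barwise--Schlipf argument with clause (1) of $\Sigma_A$-saturation supplying witnesses and clause (2) steering choices, is the right framework. For instance, your step (b) really is a clause (2) application with index set $\{0,1\}$: by Barwise completeness the $L_A$-consequences of $T_s\cup\Gamma\cup\{\sigma_s\}$ and of $T_s\cup\Gamma\cup\{\neg\sigma_s\}$ are each $\Sigma_1$ on $A$, the $L_A$-case split $T_s\cup\Gamma_*\vdash\bigwedge\Gamma'_0\vee\bigwedge\Gamma'_1$ discharges the antecedent of clause (2) via the invariant, and clause (2) then selects a consistent side.

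Two points are gestured at rather than argued. First, a bookkeeping correction: the invariant must be that the $L_A$-consequences of $T_s\cup\Gamma$ (not just $T_s$) hold in $\mathcal{A}$ of the assigned parameters; otherwise an early decision $\neg\sigma$ that is locally consistent can be contradicted when a later $A$-finite piece $\Gamma_{s'}$ containing $\sigma$ is absorbed. Second, the ``moreover'' clause is the genuinely delicate part. Whether a $\Sigma_1$-on-$A$ type over $\mathcal{A}'$ is finitely satisfiable depends on the interpretations of $L'\setminus L$, which are fixed only in the limit; your step (d) needs a real dovetailing argument that, at the stage devoted to a type code, either realizes the type or commits to a concrete $A$-finite failure that persists through later stages, and a parallel treatment of clause (2) schemes for $\mathcal{A}'$ is needed as well. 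Finally, the proposed shortcut via Proposition~\ref{P:SigmaSat} and Barwise compactness in a fattening $A^*$ is closer to the spirit of Ressayre's original route, but as written it is nearly circular: to apply Theorem~\ref{T:BarwiseCompact} in $A^*$ to $\Gamma$ together with the $L_{A^*}$-diagram and the universe axiom, one must already know that every $A^*$-finite (hence, since $A^*$ adds no new ordinals and $\Gamma$ is $\Sigma_1$ on $A$, every $A$-finite) piece of $\Gamma$ is satisfiable in some expansion of $\mathcal{A}$, which is essentially a local instance of the theorem and still requires an argument (e.g., via clause (2) as above, or via an interpolation/consistency-property argument in $A$). Filling these in is where the substance of the proof lies.
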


\subsection{Complexity of integer parts}

In \cite{KL}, the second and fourth authors studied the complexity of some parts of the construction of Mourgues and Ressayre.  In particular, they proved the following result, which we shall use later.

\begin{prop}

For a countable real closed field $R$, there is a residue field section $k$ that is $\Pi^0_2(R)$.

\end{prop}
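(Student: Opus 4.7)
The plan is to construct $k$ as the real closure in $R$ of a sequence $B=(b_s)_{s\in\omega}$ of elements of $\mathcal{O}_v$ whose residues form a transcendence basis of the residue field over $\mathbb{Q}$. Since the residue field of a real closed field is real closed (and here Archimedean), such a lift of a transcendence basis generates a residue field section via real closure. First I would enumerate $R=(r_n)_{n\in\omega}$ and process the elements in turn: declare $r_n$ to be the next $b_s$ exactly when $r_n\in\mathcal{O}_v$ and $\overline{r_n}$ is transcendental over the residues of those $b_i$'s already chosen from $\{r_0,\ldots,r_{n-1}\}$; otherwise skip $r_n$. Then define $k$ to consist of those $r\in\mathcal{O}_v$ that are algebraic over a finite subset of $B$.

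The key step is the complexity analysis of the decision at each stage. The condition $r_n\in\mathcal{O}_v$ is $\Sigma^0_1(R)$: it asserts the existence of $N\in\mathbb{N}$ with $|r_n|<N$. The transcendence check, ``$\overline{r_n}$ is transcendental over $\overline{\{b_0,\ldots,b_{s-1}\}}$,'' unfolds into the statement that, for every nonzero $p\in\mathbb{Z}[x_0,\ldots,x_{s-1},y]$, the element $p(b_0,\ldots,b_{s-1},r_n)$ of $R$ is not infinitesimal. ``Not infinitesimal'' is $\Sigma^0_1(R)$ (there is $N$ with $N\lvert p(\cdots)\rvert>1$), so the universal quantifier over polynomials makes the transcendence check $\Pi^0_2(R)$. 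Consequently, the set $S=\{n:r_n\in B\}$, and hence $B$ itself, is $\Pi^0_2(R)$.

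It remains to control the complexity of passing from $B$ to its real closure $k$ in $\mathcal{O}_v$. Exploiting the monotonicity of the construction (once $r_n$ enters $B$ it never leaves, and once $r$ is algebraic over $\{b_0,\ldots,b_{s-1}\}$ it is algebraic over every later initial segment), one rewrites ``$r\in k$'' as the $\Pi^0_2(R)$ assertion: $r\in\mathcal{O}_v$, and for every stage $s$ at which the initial segment of $B$ is declared complete, there is a polynomial witnessing algebraicity of $r$ over that initial segment. The main obstacle here is precisely this bookkeeping: a naive expression of ``$r$ is algebraic over some finite subset of the $\Pi^0_2$ set $B$'' is $\Sigma^0_3$, so one must tie the algebraicity witness to a fixed stage-wise approximation in order to collapse the complexity back to $\Pi^0_2(R)$. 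This careful stage-by-stage packaging is the contribution of the construction in \cite{KL}, which is what we appeal to.
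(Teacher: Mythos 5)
The paper itself gives no proof of this proposition; it merely cites \cite{KL}, so there is no in-paper argument to compare against. Your construction (lift a transcendence basis of the residue field into $\mathcal{O}_v$, then take the real closure inside $R$) is the standard one and matches the paper's one-line hint about looking for a maximal real closed Archimedean subfield. Your $\Pi^0_2(R)$ bound on the basis $B$ is essentially right, but note that as written the transcendence test at stage $n$ is against the residues of the previously chosen $b_i$'s, which appears to make the complexity of the check grow with $n$; the fix (which you should state) is that this test is equivalent to testing transcendence of $\overline{r_n}$ over the residues of \emph{all} $r_i\in\mathcal{O}_v$ with $i<n$, since the latter residues are algebraic over the former, and this version is uniformly $\Pi^0_2(R)$.

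The genuine gap is in the last step. You correctly observe that the naive formula for ``$r$ is algebraic over a finite subset of the $\Pi^0_2$ set $B$'' is $\Sigma^0_3(R)$, but the proposed repair --- ``for every stage $s$ at which the initial segment of $B$ is declared complete, there is a polynomial witnessing algebraicity of $r$ over that initial segment'' --- is not a valid $\Pi^0_2$ assertion. Since $S=\{n : r_n\in B\}$ is $\Pi^0_2$, membership in $S$ can never be positively confirmed at a finite stage, only guessed, so ``declared complete'' has no stage-wise meaning, and the guessed initial segment can be wrong in both directions (including a spurious element, or omitting a needed one). The collapse from $\Sigma^0_3$ to $\Pi^0_2$ in passing from $B$ to its real closure is precisely the nontrivial content of the cited result; invoking \cite{KL} at this point leaves the key claim of the proposition unestablished, so as written your proposal does not prove the statement.
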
.



\section{Complexity of exponential integer parts}\label{S:compEIP}

We now turn to our main new result.  We show that there is a real closed exponential field with a residue field section and a well ordering, all arithmetical, such that Ressayre's construction is not completed in $L_{\omega_1^{CK}}$.

\subsection{Complexity of Ressayre's construction}\label{SS:compR}

We turn to the complexity of Ressayre's construction of an exponential integer part for a real closed exponential field.  Let $R$ be a countable real closed exponential field.  By Lemma \ref{L:canRconst}, given a fixed residue field section $k$ and a well ordering $\prec$ of the elements of  $R$, then Ressayre's construction of an exponential integer part is canonical.  To fix notation, let   $(R_0, G_0, \delta_0)$ be the development triple with $R_0=k$, $G_0=\{1\}$, and $\delta_0=id$.    Let $y$ be the  $\prec$-first element  of $R-k$, adjusted so that $y$ is positive and infinite.  We will focus in this section on the chain of development triples $(B_j,H_j,\delta_j)_{j<\zeta}$ leading to the first non-trivial maximal and dyadic triple $(R_1, G_1, \delta_1)$ with $y\in R_1$.  The chain  $(B_j,H_j,\phi_j)_{j<\zeta}$ is defined in Proposition \ref{main technical}  and  each element extends $(R_0, G_0, \delta_0)$.  We recall that this chain satisfies the following conditions:

\begin{enumerate}

\item  $H_0=\divhull{\{y_i = \log^i(y)\mid i\in\omega\}}$,

\item  $H_{j+1}=\divhull{H_j\cup\{2^r\mid r\in B_j \ \&\ \ \phi_j(r)\in \ek{H_j^{>1}}\}}$,

\item  for limit $j$, $H_j = \cup_{j' < j} H_{j'}$,

\item  for all $j$, $B_j$ is maximal for $H_j$, obtained by applying Lemma \ref{Maximal}

\item The length of the chain is the first limit ordinal $\zeta$ such that $\cup_{j<\zeta} B_j$ is maximal for
$H_{\zeta} = \cup_{j<\zeta} H_j$.

\end{enumerate}

Each step is sufficiently effective that the whole construction is constructible.  However, we would like to know whether the entire construction can be completed in $L_{\omega_1^{CK}}$, the hyperarithmetical universe.    There are two possible sources of complexity, of which  the ordinals required for the construction play an important role.

\begin{enumerate}

\item  An object constructed  at some step in the chain of development triples needed to arrive  at the first non-trivial maximal dyadic triple may not be hyperarithmetical. Such objects include the  lengths of  developments  in the triples.

\item  The length of the chain of development triples needed to arrive  at the first non-trivial maximal dyadic triple may not be hyperarithmetical.

\end{enumerate}

We produce an example of a hyperarithmetical real closed field for which Ressayre's construction cannot be completed in $L_{\omega_1^{CK}}$.  Let $\mathcal{C}$ be the chain of development triples leading to  the first non-trivial maximal dyadic triple for our example.  We show that even if all objects in the construction of $\mathcal{C}$, including the lengths of all developments, are hyperarithmetical, then the length of $\mathcal{C}$ is not hyperarithmetical.

\begin{thm}
\label{offcharts}

There is a low real closed exponential field $R$, with a $\Delta^0_3$ residue field section $k$ and a $\Delta^0_3$
ordering $<$ of type $\omega+\omega$, such that Ressayre's construction, even of the first non-trivial maximal and dyadic triple
$(R_1,G_1,\delta_1)$, is not completed in $L_{\omega_1^{CK}}$.

\end{thm}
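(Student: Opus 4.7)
First, I would build the low real closed exponential field $R$ by applying Macintyre--Marker (Theorem \ref{MM}) to a complete consistent theory $T$ living in a Scott set admitting a low enumeration. The theory $T$ extends the theory of real closed exponential fields together with additional constants naming an explicit infinite positive element $y$ and further elements we will need for the coding below. Proposition \ref{residue} then yields a residue field section $k \subset R$ that is $\Pi^0_2(R)$, hence $\Delta^0_3$. Since $R$ is countable and low, we may fix a $\Delta^0_3$ well ordering $<$ of order type $\omega+\omega$ whose first $\omega$-block enumerates $k$ and whose second $\omega$-block lists the remaining elements of $R$ starting with $y$. With these inputs, the canonical chain from Lemma \ref{L:canRconst} begins at $(R_0,G_0,\delta_0)=(k,\{1\},\mathrm{id})$, and the first nontrivial step is the construction of $(R_1,G_1,\delta_1)$ containing $y$, via the chain $(B_j,H_j,\phi_j)_{j<\zeta}$ of Proposition \ref{main technical}, with initial value group section $H_0=\divhull{\{\log^n y : n\in\omega\}}$.

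Second, to force $\zeta$ to be non-hyperarithmetic, I would use Barwise--Kreisel compactness (Theorem \ref{T:BarwiseCompact}) to show that $T$ can be taken to include a $\Pi^1_1$ family $\Gamma$ of computable infinitary sentences encoding Kleene's $\mathcal{O}$: for each notation $a\in\mathcal{O}$ of a computable ordinal $\alpha_a<\omega_1^{CK}$, $\Gamma$ asserts the existence of an element $c_a\in R$ with prescribed algebraic relations to $\log^n y$ and to the $c_b$ with $\alpha_b<\alpha_a$, chosen so that the canonical Ressayre chain places $c_a$ into $B_{\alpha_a}$ with $\phi_{\alpha_a}(c_a)$ purely infinite in $k((H_{\alpha_a}^{>1}))$ while $2^{c_a}\notin H_{\alpha_a}$. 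Consistency of $\Gamma$ on each hyperarithmetic subfamily is verified by constructing a standard power-series realization $\mathbb{R}((G))$ that interprets the finitely many ordinal notations involved, which is possible because such a subfamily is well-founded and the coding admits a straightforward recursive well-founded model.

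Third, once such an $R$ exists, the dyadic condition in the definition of a dyadic development triple (using Lemma \ref{L:Halphalog}) forces $H_{\alpha_a+1}$ to contain $2^{c_a}$ strictly more than $H_{\alpha_a}$ for every $a\in\mathcal{O}$. Hence the chain $(B_j,H_j,\phi_j)_{j<\zeta}$ cannot stabilize at any computable ordinal, so $\zeta\geq\omega_1^{CK}$, and neither the chain nor the resulting $(R_1,G_1,\delta_1)$ is an element of $L_{\omega_1^{CK}}$, yielding the theorem.

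The main obstacle is the synchronization in the second step: once $R$, $k$, and $<$ are fixed, Ressayre's construction is entirely deterministic, so we cannot freely assign $c_a$ to stage $\alpha_a$. Instead, the axioms in $\Gamma$ must describe the algebraic interactions of $c_a$ with $\log^n y$ and with the earlier $c_b$ precisely enough that, reading Lemmas \ref{Maximal} and \ref{L:valimp=} as bookkeeping for what enters each $B_j$ and $H_j$, the value $v(c_a)$ first lies in $H_{\alpha_a}$ at stage $\alpha_a$ and not earlier. The technical crux is verifying this synchronization while simultaneously preserving consistency of $\Gamma$ on every hyperarithmetic subfamily, so that Barwise--Kreisel compactness in fact delivers a single global model whose canonical chain realizes the intended correspondence with the computable ordinals.
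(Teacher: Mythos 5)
Your high-level plan (Macintyre--Marker for a low recursively saturated $R$, the $\Pi^0_2(R)$ residue field section, and some form of Barwise compactness to push the chain length past every computable ordinal) matches the paper's architecture. But there are several genuine gaps.

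First, there is a category confusion in how you combine the two compactness tools. Macintyre--Marker (Theorem \ref{MM}) produces a recursively saturated model of a \emph{complete first-order theory} $T$ lying in a Scott set; you cannot ``take $T$ to include a $\Pi^1_1$ family of computable infinitary sentences.'' The paper keeps the two tools cleanly separate: Macintyre--Marker is used once to get $R$ (and then $k$), and Barwise Compactness is applied \emph{afterwards} to a $\Pi^1_1$ set $\Gamma$ of infinitary sentences whose intended model is an $\omega$-model of $KP$ (with $R$, $k$ already fixed as hyperarithmetical parameters) together with a new symbol $\prec$ for the well ordering. The infinitary sentences $\varphi_\alpha$ do not assert a correspondence between elements and ordinal notations; they merely say ``if all triples up to stage $\beta$ lie in $L_\alpha$ for $\beta<\alpha$, then $B_\beta\neq\bigcup_{j<\beta}B_j$.''

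Second, you commit too early to a fixed ordering ``whose first $\omega$-block enumerates $k$ and whose second $\omega$-block lists the remaining elements starting with $y$.'' That ordering cannot work: $k$ already sits inside $R_0$, so processing it contributes no new triples, and then nothing forces the later stages of the chain to be long. The well ordering is precisely the object that must be obtained from Barwise Compactness, and in the paper's consistency check the ordering $\prec_\alpha$ places $y$ as the $\prec_\alpha$-least element and then the special constants $c_\beta$ ($\beta\le\alpha$) as the remainder of the first $\omega$-block, so that Ressayre's deterministic construction is forced to process them before anything else. Your proposal never produces a candidate ordering whose first $\omega$-block contains the special elements, so the ``synchronization'' obstacle you describe at the end is not merely a technical crux --- with your ordering it would fail outright.

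Third, the consistency verification of each hyperarithmetical $\Gamma_\alpha$ is not done by building an abstract power-series model $\mathbb{R}((G))$; the witnessing structure has to be an $\omega$-model of $KP$ containing the \emph{already fixed} $R$ and $k$. The paper instead writes a c.e.\ set of finitary descriptions of the constants $c_{\beta,i}$ ($\beta\le\alpha$), invokes Barwise--Schlipf recursive saturation of $R$ to get a $\Delta^0_2$ expansion $R_\alpha$ realizing those descriptions, and uses that expansion to define the $\Delta^0_3$ ordering $\prec_\alpha$. The hard part --- that the constants then receive exactly the intended developments, with $c_{\beta,i}\in B_\beta$ when $\beta$ is $0$ or a limit and $c_{\beta,i}\in H_\beta$ when $\beta$ is a successor, and that no earlier-constructed element steals the valuation --- is the content of Lemma \ref{L:Ind}, proved by a nontrivial nested induction on $\beta$ and $\gamma$. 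Your sketch gestures at this (``$\phi_{\alpha_a}(c_a)$ purely infinite ... while $2^{c_a}\notin H_{\alpha_a}$'') but does not account for the alternating successor/limit behavior of the $c_\beta$'s, nor for why the valuation of $c_{\gamma,i}$ cannot land in an earlier $H_\beta$, which is the actual mechanism that keeps the chain from stabilizing.

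In short: the ingredients are the right ones, but the way you glue them (infinitary sentences inside $T$, ordering fixed before compactness, consistency via an external power-series field) would not deliver the theorem, and the crucial Lemma \ref{L:Ind}-style bookkeeping is missing.
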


To prove Theorem \ref{offcharts}, we use recursive saturation together with Barwise Compactness.
We begin by fixing  the particular real closed exponential field $R$  cited in Theorem \ref{offcharts}.

\begin{lem}

There is a recursively saturated real closed exponential field $R$ such that $D^c(R)$ is low.

\end{lem}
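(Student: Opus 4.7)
The plan is to apply the Macintyre--Marker theorem (Theorem \ref{MM}) with a carefully chosen low Scott set. Recall that Theorem \ref{MM} says: if $E$ enumerates a countable Scott set $\mathcal{S}$ and $T \in \mathcal{S}$ is a complete theory, then there is a recursively saturated model $\mathcal{A} \models T$ with $D^c(\mathcal{A}) \le_T E$. So if we can find $(\mathcal{S}, E)$ with $E$ of low Turing degree and a completion $T$ of the theory of real closed exponential fields lying in $\mathcal{S}$, we are done.

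First I would invoke the Jockusch--Soare low basis theorem to produce a countable Scott set $\mathcal{S}$ together with an enumeration $E$ of $\mathcal{S}$ such that $E$ is low. Concretely, the $\Pi^0_1$ class of (codes for) $\omega$-models of $\mathrm{WKL}_0$ is nonempty, and its members are exactly the Scott sets; the low basis theorem yields a member of low Turing degree. Equivalently, there is a low real $E$ whose columns $E_n = \{m : \langle n, m\rangle \in E\}$ form a Scott set $\mathcal{S} = \{E_n : n \in \omega\}$. The existence of such a low enumeration is standard and does not require anything specific about exponential fields.

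Next I would extract a suitable complete theory from $\mathcal{S}$. The axioms for real closed exponential fields (the field axioms plus the axioms in Definition \ref{D:EXP}) form a computable theory $T_0$. Since $T_0$ is computable, it belongs to every Scott set, and Scott sets are closed under taking completions of consistent Henkin theories; hence $\mathcal{S}$ contains some complete extension $T \supseteq T_0$. Applying Theorem \ref{MM} to this $T$ and to the enumeration $E$ of $\mathcal{S}$ produces a recursively saturated real closed exponential field $R \models T$ with $D^c(R) \le_T E$. Since $E$ is low, so is $D^c(R)$, giving the lemma.

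The main technical point is the existence of a low Scott set with a low enumeration; but this is a classical application of the low basis theorem and does not need to be reproved here. A minor matter is noting that the Macintyre--Marker construction uses only $T$ and $E$ in a relativizable way, so the bound $D^c(R) \le_T E$ is genuine and transfers lowness from $E$ to $R$. No further structural properties of exponential fields are needed beyond the fact that RCEF is an effectively axiomatized, consistent theory.
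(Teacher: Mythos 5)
Your overall strategy is exactly the paper's: use the low basis theorem to get a Scott set $\mathcal{S}$ with a low enumeration $E$, find a completion $T$ of the RCEF axioms inside $\mathcal{S}$, and apply the Macintyre--Marker theorem to produce a recursively saturated $R\models T$ with $D^c(R)\leq_T E$. The step where you extract a completion of the (computable, consistent) RCEF axioms from $\mathcal{S}$ via closure under WKL is fine, and the application of Theorem~\ref{MM} is correct.

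The one place your write-up goes astray is the justification for the low Scott set. The collection of ``codes for $\omega$-models of $\mathrm{WKL}_0$'' is not a $\Pi^0_1$ class: saying that the columns of a real are closed under Turing reducibility and contain paths through all infinite binary trees they code involves unbounded existential quantifiers over column indices, so the defining condition is well beyond $\Pi^0_1$, and you cannot apply the low basis theorem to it directly. The paper's route avoids this: apply the low basis theorem to the genuine $\Pi^0_1$ class of completions of $\mathrm{PA}$ to get a low completion $K$, let $\mathcal{S}=\mathrm{Rep}(K)$ (the sets representable in $K$), which is a Scott set by Scott's theorem, and observe that $\mathcal{S}$ has an enumeration $E\leq_T K$. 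That gives the low enumeration you assert exists, and from there your argument proceeds as the paper's does. So: same approach, but replace the $\mathrm{WKL}_0$ $\Pi^0_1$-class claim with the completion-of-$\mathrm{PA}$ construction.
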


\begin{proof} [Proof of Lemma]

By the low basis theorem, there is a low completion $K$ of $PA$.  Let $\mathcal{S}$ be the Scott set consisting of sets representable with respect to $K$.  There is an enumeration $E$ $\mathcal{S} = Rep(K)$ such that $E\leq_T K$.  In
$\mathcal{S}$, we find a completion $T$ of the set of axioms for real closed exponential fields.
By Theorem \ref{MM}, of Macintyre and Marker \cite{MM}, there is a recursively saturated model $R$ of $T$ such that $D^c(R)\leq_T E$.  This is the real closed exponential field $R$ that we want.

\end{proof}

We now choose the residue field section $k$ for $R$ cited in Theorem \ref{offcharts}.  We do not ask that it respect the exponential function.

\begin{lem}

There is a $\Delta^0_3$ residue field section $k$ for $R$.

\end{lem}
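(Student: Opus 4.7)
The plan is to invoke the proposition, already stated in the preceding section (and restated in Section~5), that for any countable real closed field $R$ there is a residue field section that is $\Pi^0_2(R)$, i.e.\ $\Pi^0_2$ in the complete diagram $D^c(R)$. The only work beyond citing that result is translating the relativized class $\Pi^0_2(R)$ into the unrelativized arithmetical hierarchy, using that $R$ is low.

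First I would record that, by the Kuhlmann--Lange construction already cited (see Proposition~\ref{residue}), there is a residue field section $k\subset R$ whose image, as a subset of the universe of $R$, is $\Pi^0_2(D^c(R))$. The construction proceeds by building a maximal real closed Archimedean subfield of $R$ inside the valuation ring; checking that an element lies in this subfield amounts to a $\Pi^0_2$ condition relative to the complete diagram of $R$ (one asks, for every finite tuple of previously chosen elements and every rational polynomial witnessing the ordered-field axioms, that no nontrivial Archimedean collapse occurs).

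Next I would combine this with the lowness of $D^c(R)$, which was secured by the preceding lemma via the Low Basis Theorem together with Macintyre--Marker (Theorem~\ref{MM}). Since $D^c(R)$ is low, $D^c(R)'\equiv_T\emptyset'$, so any set that is $\Pi^0_2$ in $D^c(R)$ is $\Pi^0_1$ in $D^c(R)'\equiv_T\emptyset'$, hence $\Pi^0_2$ in $\emptyset$. In particular, the residue field section produced in the previous paragraph is $\Pi^0_2$ and therefore $\Delta^0_3$, as required.

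There is essentially no obstacle: the entire content is Proposition~\ref{residue} plus the standard collapse $\Pi^0_2(X)=\Pi^0_2$ for low $X$. I would note explicitly that we are not asking $k$ to respect the exponential function (as the paper itself emphasises just before the lemma), so no further work coordinating $k$ with $2^x$ is needed at this stage; the extra exponential structure will be handled later when the dyadic triple is assembled on top of this $k$.
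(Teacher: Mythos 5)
Your proposal is correct and matches the paper's argument: cite Proposition~\ref{residue} to get a residue field section that is $\Pi^0_2(D^c(R))$, then use lowness of $D^c(R)$ to collapse the relativized class into the unrelativized hierarchy. You actually extract the slightly sharper bound $\Pi^0_2$ (via $\Pi^0_2(X)=\Pi^0_1(X')$ and $X'\equiv_T\emptyset'$), which of course implies the stated $\Delta^0_3$.
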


\begin{proof}

There is a residue field section $k$ that is $\Pi^0_2(R)$.  Since $R$ is low, $k$ is $\Delta^0_3$.

\end{proof}

To prove Theorem \ref{offcharts}, we  construct a $\Delta^0_3$ well ordering $\prec$ of $R$
so that Ressayre's construction in Lemma \ref{L:canRconst} either produces a non-hyperarithmetical object in $(B_j,H_j,\delta_j)_{j<\zeta}$  or so that this chain of development triples leading up to $(R_1, G_1, \delta_1)$ has length greater $\omega_1^{CK}$, i.e.,  $\zeta\ge \omega_1^{CK}$.  We will apply  Barwise Compactness  (Theorem \ref{T:BarwiseCompact}) to particular set of set of sentences $\Gamma$.
Let $\Gamma$ consist of the following sentences.

\begin{enumerate}

\item  An infinitary sentence $\psi$ characterizing the $\omega$-models of $KP$.

An $\omega$-model has the feature that each element of the definable element $\omega$ has only finitely many elements, a fact that we can express using a computable infinitary sentence.  An $\omega$-model of $KP$ contains the hyperarithmetical sets.  In particular, we have the real closed exponential field $R$ and the residue field section $k$, with the indices we have chosen for them.

\item  A finitary sentence $\varphi_{\prec}$ saying of a new symbol $\prec$ that it is a $\Delta^0_3$ ordering of $R$ of order type $\omega+\omega$.

The sentence $\varphi_{\prec}$ states that there exists an element of $\omega$ that is a $\Delta^0_3$ index for $\prec$, and $(R, \prec)$ is isomorphic to the ordinal $\omega+\omega$.  Necessarily, the isomorphism will be $\Delta^0_3$, so it is an element of any model of $KP$.

\item  A sentence $\varphi_\alpha$, for each computable limit ordinal $\alpha$, saying that if for all $\beta<\alpha$,
the triples $(B_j,H_j,\gamma_j)$ are in $L_\alpha$ for all $j \leq \beta$, then $B_\beta\not= \cup_{j < \beta} B_j$.


Note that we identify an element of $k((H_i))$ of length $<\alpha$ with a decreasing function from $\alpha$ to $k\times H_j$.

\end{enumerate}

We must show that every hyperarithmetical subset of $\Gamma$ has a model.
For a computable ordinal $\alpha$, let $\Gamma_\alpha$ consist of $\psi$, $\varphi_{\prec}$ and $\varphi_\beta$ for
\mbox{$\beta<\alpha$.}  Each hyperarithmetical subset of $\Gamma$ is included in one of the sets $\Gamma_\alpha$.  So, to show $\Gamma$ is consistent, we must show each $\Gamma_\alpha$ is consistent for all $\alpha<\omega_1^{CK}$.  In other words, for each $\alpha<\omega_1^{CK}$,
we must show that there is a $\Delta^0_3$ ordering $\prec_\alpha$ on $R$, of order type $\omega+\omega$, so that when  Ressayre's construction is run according to the well ordering $\prec_\alpha$, if all triples $(B_j, H_j, \gamma_j)$ are in $L_\alpha$ for  $j<\alpha$, then the length of the chain of development triples $(B_j,H_j,\phi_j)$ leading to the first nontrivial maximal and dyadic triple  is greater than $\alpha$.

\subsection{Special elements}

Let $\alpha$ be a computable ordinal, and we fix a path through $\mathcal{O}$.  To show that $\Gamma_\alpha$ has a model, we use some special elements, which we name by constants $y$, $y_i$, $i\in\omega$, $c_\beta$, for $\beta<\alpha$ either $0$ or a limit ordinal, and $c_{\beta,i}$ for all $\beta < \alpha$ and all $i\in\omega$.  We first state some properties that we would like for the constants.  We define all of the constants in terms of $y$, $c_0$, and $c_\beta$ for limit $\beta<\alpha$.  Assuming that $(B_j, H_j, \gamma_j)$ are in $L_\alpha$ for  $j\le \beta$, we want $c_\beta\in B_\beta - \cup_{\gamma<\beta} B_\beta$.  To assure this, we specify a development that we want for $c_\beta$, in terms of constants $c_{\gamma+1,i}$ for $\gamma<\beta$, which we want in $H_{\gamma+1} - H_\gamma$.

In order to obtain a model of $\Gamma_\alpha$ with these constants, we give a c.e.\ set of  finitary axioms, partially describing our constants. 
Since $R$ is recursively saturated, we then apply Theorem \ref{Barwise-Schlipf}  of Barwise and Schlipf to get an expansion $R_\alpha$ satisfying these  finitary axioms.  Finally, we define a $\Delta_3^0$ well ordering $\prec_\alpha$ so that when  Ressayre's construction is run using $R_\alpha, k$, and $\prec_\alpha$ the constants have all of the desired properties.  Hence, $R_\alpha$, $k$, $\prec_\alpha$ and $L_{\omega_1^{CK}}$ witness the truth of $\Gamma_\alpha$.

\subsubsection{Descriptions of the constants}

Set $y$ to be a positive and infinite element of $R$, and  set $y_i = \log^i(y)$, so that all the $y_i$ are positive and infinite and satisfy
\[y_0 > y_1 > y_2 > y_3 \ldots \]
We will define the well ordering $\prec_\alpha$ so that $y_i\in H_0$ for all $i\in\omega$ when Ressayre's construction is run according to $\prec_\alpha$.  Furthermore, we will define $\prec_\alpha$ and ensure the constants satisfy certain properties so that they will be assigned particular developments.


We want $c_0$ to have the development $\sum_{1\le i<\omega} y_i$, which is a development in $\ek{H_0^{>1}}$ if $y_i\in H_0$ for all $i<\omega$. The description of $c_0 = c_{0,1}$ is
\begin{center}
 $y_1 < c_0  < 2y_1$, $y_2 < c_0 - y_1 < 2y_2$, etc.\\
 and \\
$c_{0, i}=c_0-\sum_{j=1}^{i-1} y_j$.
\end{center}
We defined $c_{0,i}$  so that if $c_0$ is assigned the  development $\sum_{1\le i<\omega} y_i$, then $c_{0,i}$ will have the development $\sum_{i \le j<\omega} y_j$.

\item Let $\gamma\le \alpha$ be a successor ordinal with $\gamma=\beta+1$.
We  define  $c_{\gamma, j}$ to be $c_{\beta+1,j}=2^{c_{\beta,j+1}}$ for $0<j<\omega$.


\item  Let $\gamma\le\alpha$ be a limit ordinal    where the   notation for
$\gamma$ in our fixed path through $\mathcal{O}$ gives a sequence of successor ordinals $(\gamma_i)_{i\in\omega}$ converging to $\gamma$.   
The description of $c_{\gamma}=c_{\gamma, 1}$ is
\begin{center}
$c_{\gamma_1,1} < c_\gamma < 2c_{\gamma_1,1}$, $c_{\gamma_2,2} < c_\gamma - c_{\gamma_1,1} < 2c_{\gamma_2,2}$, etc., and \\
$c_{\gamma, i}=c_\gamma-\sum_{j=1}^{i-1} c_{\gamma_j, j}$.
\end{center}
This completes our description of any element $c_{\gamma, i}$ for $\gamma\le\alpha<\omega_1^{CK}$ and $0<i\in\omega$.

Suppose $\gamma\le\alpha$ is a limit ordinal.  We want $c_{\gamma, i}$ to be assigned the development $\sum_{ i\le j<\omega} c_{\gamma_j,j}$.   In order for  $\sum_{ i\le j<\omega}  c_{\gamma_j,j}$ to be a development under  Ressayre's construction with $\prec_\alpha$, we will need to ensure that each element $c_{\gamma_j,j}$ is a member of the value group section $G_1$ and that \mbox{$c_{\gamma_j, j}>c_{\gamma_{j+1}, j+1}$} for all nonzero $j\in\omega$.  The next lemma shows that the latter condition holds.   Later, we will choose the well ordering $\prec_\alpha$ on $R$ carefully to ensure that the former condition holds as well.

\begin{lem}
The descriptions of the constants $c_{\beta, i}$ for  $\beta\le\alpha$ and  $i\in\omega$ imply that for all  $\beta\le\alpha$
\begin{equation}\label{E:y>rbeta}
y_0>c_{\beta,1}>y_1>c_{\beta,2}>y_2>c_{\beta,3}>y_3>c_{\beta,4}> \ldots
\end{equation}
\end{lem}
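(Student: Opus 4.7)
The plan is to prove the lemma by transfinite induction on $\beta\le\alpha$, establishing simultaneously, for every $i\ge 1$, both the lower bound $c_{\beta,i}>y_i$ and the upper bound $c_{\beta,i}<y_{i-1}$. Throughout, the key algebraic input is axiom (5) of Definition \ref{D:EXP}: since each $y_j$ is positive and infinite, $y_{j-1}=2^{y_j}>y_j^n$ for every standard $n$, so in particular $y_{j-1}>4y_j$ and $y_{j-1}>y_j^2$.

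The lower bounds are immediate in each case. For $\beta=0$ the inequality $y_i<c_{0,i}$ is part of the description. For $\beta=\gamma+1$ one has $c_{\beta,i}=2^{c_{\gamma,i+1}}>2^{y_{i+1}}=y_i$ using the inductive hypothesis and monotonicity of $2^x$. For limit $\beta$ the description gives $c_{\beta,i}>c_{\beta_i,i}$, and the inductive hypothesis on $\beta_i<\beta$ yields $c_{\beta_i,i}>y_i$.

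The upper bounds are more subtle: in the limit case the description only provides $c_{\beta,i}<2c_{\beta_i,i}$, so the naive hypothesis $c_{\beta_i,i}<y_{i-1}$ gives only $c_{\beta,i}<2y_{i-1}$. I will therefore strengthen the inductive hypothesis by case-splitting on the type of $\beta$: prove $c_{\beta,i}<y_{i-1}/2$ when $\beta$ is $0$ or a limit, and the much tighter $c_{\beta,i}<\sqrt{y_{i-1}}=2^{y_i/2}$ when $\beta$ is a successor. The base case $c_{0,i}<2y_i<y_{i-1}/2$ is one application of axiom (5). For $\beta=\gamma+1$, every form of the hypothesis on $\gamma$ implies $c_{\gamma,i+1}<y_i/2$ (via $2y_{i+1}<y_i/2$, which follows from $y_i=2^{y_{i+1}}>4y_{i+1}$, or via $\sqrt{y_i}<y_i/2$ for $y_i>4$), whence $c_{\beta,i}=2^{c_{\gamma,i+1}}<2^{y_i/2}=\sqrt{y_{i-1}}$. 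The limit step is where the case-split pays off: because the cofinal sequence $(\beta_i)$ consists of successor ordinals, the sharper successor hypothesis $c_{\beta_i,i}<\sqrt{y_{i-1}}$ applies, so $c_{\beta,i}<2\sqrt{y_{i-1}}<y_{i-1}/2$ whenever $\sqrt{y_{i-1}}>4$, which is automatic since $y_{i-1}$ is infinite.

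The main obstacle is precisely the limit step: the description's factor of $2$ in $c_{\beta,i}<2c_{\beta_i,i}$ would otherwise destroy any upper bound of the form ``a fixed fraction of $y_{i-1}$.'' The remedy is to demand, through the case-split, the drastically stronger successor bound $\sqrt{y_{i-1}}$; this leaves so much slack that the unavoidable doubling in the limit step is harmlessly absorbed. With the upper and lower bounds in hand, the chain $y_0>c_{\beta,1}>y_1>c_{\beta,2}>y_2>\cdots$ follows by stringing together the inequalities $y_{i-1}>c_{\beta,i}>y_i$ for successive $i$.
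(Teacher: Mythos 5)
Your proof is correct, and it improves on the paper's. The paper runs the same transfinite induction on $\beta$ with the same three cases (base, successor, limit), and its base and successor steps match yours. But in the limit step the paper simply strings together the chain for $(c_{\gamma_i,i})_i$ with the description's inequality $c_{\gamma_i,i}<c_{\gamma,i}<2c_{\gamma_i,i}$ and asserts $y_{i-1}>c_{\gamma,i}$ without justification; as you observe, the naive induction hypothesis $c_{\gamma_i,i}<y_{i-1}$ only yields $c_{\gamma,i}<2y_{i-1}$, which is not enough. You correctly diagnose this as the crux and repair it by strengthening the induction hypothesis with a case split: $c_{\beta,i}<y_{i-1}/2$ at $0$ and limits, and the much tighter $c_{\beta,i}<\sqrt{y_{i-1}}=2^{y_i/2}$ at successors. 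Since the cofinal sequence in the limit step consists of successors, the sharp successor bound absorbs the unavoidable doubling: $c_{\gamma,i}<2\sqrt{y_{i-1}}<y_{i-1}/2$ because $y_{i-1}>16$. The verification of each step using axiom (5) of Definition \ref{D:EXP} is correct. So while your argument is not a different route in a structural sense, it supplies a genuine missing lemma (the strengthened bounds) that the paper's limit step requires but does not state, and the case-split invariant is the right way to make the induction close.
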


\begin{proof}
From the description of $c_{0, i}$, we can see that (\ref{E:y>rbeta}) holds for $\beta=0$.   Let $\gamma\le \alpha$ be a successor ordinal with $\gamma=\beta+1$.  We inductively assume that  the descriptions for the elements $c_{\beta, k}$ imply the ordering in (\ref{E:y>rbeta}).  By applying a power of $2$ to the inequalities in (\ref{E:y>rbeta}) and the definition of $c_{\gamma, i}$, we obtain the ordering
\begin{equation}
y_0>c_{\gamma,1}>y_1>c_{\gamma,2}>y_2>c_{\gamma,3}>y_3>c_{\gamma,4}> \ldots
\end{equation}

 Let $\gamma\le\alpha$ be a limit ordinal    where the   notation for
$\gamma$ in our fixed path through $\mathcal{O}$ gives a sequence of successor ordinals $(\gamma_i)_{i\in\omega}$ converging to $\gamma$.   Moreover, by induction, we have that the descriptions of the $(c_{\gamma_i, i})_{i\in\omega}$ imply that

\begin{equation}
y_0>c_{\gamma_1,1}>y_1>c_{\gamma_2,2}>y_2>c_{\gamma_3,3}>y_3>c_{\gamma_4,4}> \ldots
\end{equation}

By the description of $c_{\gamma, i}$, we have that
\begin{equation}
y_0>c_{\gamma, 1}>c_{\gamma_1,1}>y_1>c_{\gamma, 2}>c_{\gamma_2,2}>y_2>c_{\gamma, 3}>c_{\gamma_3,3}>y_3> \ldots,
\end{equation}
completing the induction.
\end{proof}

For the given computable ordinal $\alpha$, we may take the set of finitary sentences describing the constants to be computably enumerable.    Since $R$ is recursively saturated, we get an expansion $R_\alpha$ of $R$ with special elements $c_{\beta, i}\in R$ satisfying the appropriate sentences for $\beta\le\alpha$ and $i\in\omega$ by Theorem \ref{Barwise-Schlipf}.  We may take $R_\alpha$ to be $\Delta_2^0$ since $R$ is low and the oracle $\Delta^0_2$ can determine whether an element satisfies a given description of some $c_{\beta}$.

\subsection{The ordering}

We now describe a $\Delta^0_3$ well ordering $\prec_\alpha$ of $R$ such that when Ressayre's construction is run on $R$, $k$, and $<_\alpha$, if $(B_j, H_j, \gamma_j)$ are in $L_\alpha$ for  $j<\alpha$, then the development chain leading to the first nontrivial maximal and dyadic triple has length greater than $\alpha$.  Moreover, we construct $\prec_\alpha$ so that $(R, \prec_\alpha)$ has order type $\omega+\omega$.  We set $y$ to be the  $\prec_\alpha$-least element of $R$.  The special elements $c_\beta$ for $\beta\le \alpha$ will make up the remainder of the initial segment of type $\omega$, and the other elements will make up the remaining segment of type
$\omega$.  Two elements in the same $\omega$ segment are ordered according to the standard type $\omega$ ordering on their codes.  Since $R_\alpha$ is $\Delta_2^0$, we can use $\Delta_3^0$ to determine, for a given $r\in R_{\alpha}$,  whether there exists some $\beta\le\alpha$ such that $r=c_\beta$, i.e., whether $r$ should be placed in the intial $\omega$ segment or the latter.  Hence, $\prec_\alpha$ is $\Delta_3^0$.

We now run Ressayre's construction on $R$, $k$, and $\prec_\alpha$ as described.  For the remainder of the section, we let $(B_i, H_i, \phi_i)_{i<\zeta}$ be the chain of development triples in this construction  leading up to the first maximal and nontrivial dyadic triple $(R_1, G_1, \delta_1)$.  We want to show that $\zeta>\alpha$.
\subsubsection{Lemmas about the constants}

We want to show that the constants $c_\beta$ get the developments we want for them.
The following lemmas are useful.

\begin{lem}

For all $\beta\leq\alpha$, for all $h\in H_\beta^{>1}$, there is some $i$ such that $h > y_i$.

\end{lem}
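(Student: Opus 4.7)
The plan is to prove the claim by transfinite induction on $\beta \le \alpha$. The essential mechanism is to reduce an assertion about $H_{\beta+1}$ to one about $H_\beta$ by passing to logarithms, exploiting the purely infinite form of the development of $\log h$ guaranteed by Lemma~\ref{L:Halphalog}, and then reversing the logarithm via the identity $2^{y_{j+1}} = y_j$.

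For the base case $\beta = 0$, I use $H_0 = \divhull{\{y_i : i \in \omega\}}$ to write any $h \in H_0$ as a finite product $h = \prod_{i=0}^{n} y_i^{q_i}$ with $q_i \in \mathbb{Q}$. Lemma~\ref{y_iincr} gives $v(y_{i+1})^m < v(y_i)$ for all $m$ and $i$, so the factor $v(y_{i_0})^{q_{i_0}}$ dominates the product in the (multiplicative) value group, where $i_0$ is the least index with $q_{i_0}\neq 0$. The hypothesis $h > 1$ forces $q_{i_0} > 0$, and the same dominance yields $v(h) > v(y_j)^N$ for every $j > i_0$ and every $N$; in particular $h > y_{i_0+1}$.

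For the successor step, given $h \in H_{\beta+1}^{>1}$, I use the definition of $H_{\beta+1}$ to decompose $h = h_0 \cdot 2^s$ with $h_0 \in H_\beta$, $s \in B_\beta$, and $\phi_\beta(s) \in \ek{H_\beta^{>1}}$, so that $\log h = \log h_0 + s$. Lemma~\ref{L:Halphalog} at stage $\beta$ gives $\log h_0 \in B_\beta$ with $\phi_\beta(\log h_0) \in \ek{H_\beta^{>1}}$, and since $\ek{H_\beta^{>1}}$ is closed under addition, $\phi_\beta(\log h) \in \ek{H_\beta^{>1}}$. Let $g \in H_\beta^{>1}$ be the maximum support element of $\phi_\beta(\log h)$, so $v(\log h) = g$ under the identification of the value group with $H_\beta$. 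The inductive hypothesis applied to $g$ produces some $i$ with $g > y_i$; since $H_\beta$ is a value group section, $g > y_i$ implies $v(g) > v(y_i)$, and combined with Lemma~\ref{y_iincr} I get $v(\log h) > v(y_{i+1})$, hence $\log h > y_{i+1}$. Monotonicity of the exponential then gives $h > 2^{y_{i+1}} = y_i$.

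The limit case is immediate from $H_\beta = \bigcup_{\gamma < \beta} H_\gamma$: any $h \in H_\beta^{>1}$ already lies in some $H_\gamma^{>1}$ with $\gamma < \beta$, and the inductive hypothesis applies directly. The only delicate step is the successor case, where the key insight is to pass to logarithms rather than try to manipulate $h$ directly; once the development of $\log h$ is seen to be purely infinite over $H_\beta$, the inductive hypothesis applies automatically to the leading support element and the identity $2^{y_{i+1}} = y_i$ completes the conversion back.
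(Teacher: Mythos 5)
Your proof is correct and takes essentially the same route as the paper's: induct on $\beta$, handle the base case by reading off the dominant factor $y_{i_0}$ of a finite product of rational powers of the $y_i$, and handle the successor case by passing to $\log h$ (whose development lies in $\ek{H_\beta^{>1}}$), applying the inductive hypothesis to the leading support element, and exponentiating back. The only difference is cosmetic: you explicitly decompose $h = h_0 \cdot 2^s$ and recombine as $\log h = \log h_0 + s$, whereas the paper immediately reduces to the case $h = 2^r$ with $\phi_\beta(r)\in\ek{H_\beta^{>1}}$, but the reduction is the same.
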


\begin{proof}

We proceed by induction on $\beta$.  Since $y$ is the $\prec_\alpha$-least element of $R$, $H_0$ equals $\divhull{y_i\mid i<\omega}$.
So, $h\in H_0$ is a finite product of rational powers of the $y_i$.  Let $i$ be least such that there is a factor $y_i^{q_i}$.  Since $h\in H_0^{>1}$, $q_i$ must be positive.  Then $h > y_{i+1}$.  Suppose the statement holds for $\beta$, and $h \in H_{\beta+1}^{>1}$.  By construction, we may assume that $h = 2^r$, where $\phi_\beta(r)\in \ek{H_\beta^{>1}}$ has a positive initial coefficient.

  Say $w(r) = h' > y_i$.  Then $h > 2^{y_i} > y_i$.  Finally, suppose the statement holds for $\gamma < \beta$, where $\beta$ is a limit ordinal.  Since $H_\beta = \cup_{\gamma<\beta} H_\gamma$, the statement holds for $H_\beta$.

\end{proof}

\begin{lem}\label{L:logh}
For all $\beta\le\alpha$, if $h\in H_\beta$ and $h\not=1$, then there is some $\gamma\le \alpha$ with $\gamma=0$ or $\gamma<\beta$ such that  $\log h\in B_\gamma$ and $\delta_\gamma(\log h)\in \ek{H_\gamma^{>1}}$.
\end{lem}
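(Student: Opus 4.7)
The plan is to induct on $\beta \le \alpha$, paralleling the inductive construction of the chain $(B_j, H_j, \delta_j)$ itself.

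For the base case $\beta = 0$, the definition $H_0 = \divhull{\{y_i \mid i \in \omega\}}$ lets me write any $1 \ne h \in H_0$ as a finite product $\prod_i y_i^{q_i}$ with $q_i \in \mathbb{Q}$, so $\log h = \sum_i q_i\, y_{i+1}$. Each $y_{i+1}$ lies in $H_0 \subseteq B_0$ and has $v(y_{i+1}) > 1$ by Lemma \ref{C:yinf}, hence $y_{i+1} \in H_0^{>1}$. Because $\phi_0$ is the identity on $k(H_0)$, the element $\log h$ lies in $B_0$ and its image $\delta_0(\log h) = \sum_i q_i\, y_{i+1}$ is supported in $H_0^{>1}$, so $\gamma = 0$ works.

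For the successor case $\beta = \beta' + 1$, I would use the definition of $H_{\beta'+1}$ to factor any $h \in H_\beta$ as $h'\cdot \prod_i 2^{r_i}$, where $h' \in H_{\beta'}$ and each $r_i \in B_{\beta'}$ satisfies $\phi_{\beta'}(r_i) \in \ek{H_{\beta'}^{>1}}$ (any rational exponent is absorbed via $(2^{s})^{q} = 2^{qs}$, using that $B_{\beta'}$ is a field and $\phi_{\beta'}$ is a ring homomorphism). Taking logs, $\log h = \log h' + \sum_i r_i$. Applying the inductive hypothesis at $\beta'$ produces some $\gamma_0 \le \alpha$ with $\gamma_0 = 0$ or $\gamma_0 < \beta'$ such that $\log h' \in B_{\gamma_0}$ and $\delta_{\gamma_0}(\log h') \in \ek{H_{\gamma_0}^{>1}}$. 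Since the triples form a chain, these data transfer upward: $\log h' \in B_{\beta'}$ with image still in $\ek{H_{\beta'}^{>1}}$. Combining with the $r_i$ (already in $B_{\beta'}$ with images in $\ek{H_{\beta'}^{>1}}$), I conclude $\log h \in B_{\beta'}$ and $\delta_{\beta'}(\log h) \in \ek{H_{\beta'}^{>1}}$, so $\gamma = \beta'$ suffices. The limit case is immediate from $H_\beta = \cup_{j<\beta} H_j$ and the inductive hypothesis applied to any $j < \beta$ with $h \in H_j$.

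No step presents a real obstacle; the lemma is essentially a recasting of Lemma \ref{O:logh} transported along the whole chain, using the identity $\log(2^r) = r$ at successor stages. The only place that could seem delicate is the successor step, where one has to be careful that the upward transfer of the inductive hypothesis legitimizes combining $\log h'$ with the $r_i$ inside the single triple $(B_{\beta'}, H_{\beta'}, \phi_{\beta'})$—but this uses only the monotonicity of $B_j$, $\phi_j$, and $H_j$ along the chain.
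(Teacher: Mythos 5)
Your proof takes essentially the same route as the paper's: induction on $\beta$, with the same base case computation $\log h = \sum_i q_i y_{l_i+1}$, the same successor-step factorization $h = h'\prod_i 2^{r_i}$ followed by taking logs and invoking the inductive hypothesis on $\log h'$, and the same limit-case observation that $H_\beta = \cup_{j<\beta} H_j$. The only difference is cosmetic: you spell out the upward transfer of the inductive data from $B_{\gamma_0}$ to $B_{\beta'}$, which the paper leaves implicit.
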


\begin{proof}
We prove the lemma by induction on $\beta\le\alpha$.  If $h\in H_0$, then $h=\prod_{i=0}^ny_{l_i}^{q_i}$ with all $q_i\in\mathbb{Q}$ nonzero  and ${l_{i}}<{l_{i+1}}$ for $0\le i<n$.  Then $\log h=\sum_{i=0}^n\ {q_i}\log y_{l_i}$.  Since $\log y_{l_i}=y_{l_i+1}$, $\log h\in B_0$ and $\delta_0(\log h) \in \ek{H_\gamma^{>1}}$.

Suppose the statement holds for all $\lambda<\beta$.  If $\beta$ is a limit ordinal, $h\in H_\beta$ implies $h\in H_\lambda$ for some successor ordinal $\lambda<\beta$, so the statement holds by induction.
Suppose $\beta=\lambda+1$ and $h\in H_\beta-H_\lambda$.
By construction, $h=h'{\prod_{i=0}^{n}} {2^{t_i}}$ where $h'\in H_\lambda$ and $t_i\in B_\lambda$ and $\delta_\lambda(t_i)\in \ek{H_\lambda^{>1}}$  for all $1\le i\le n$.  Then, $\log h= \log h'+\sum_{i=0}^n t_i$   has the desired features by induction.
\end{proof}

To show that the elements $c_{\beta,i}$ get the developments we want for them, we must show that other elements cannot compete for these developments.

\begin{lem}\label{L:Ind} Suppose Ressayre's construction is run on a well ordering $\prec$ such that  $y$ is the first element and the elements $c_{\beta}$ for  $\beta\le \alpha$  form the initial $\omega$ segment.   For all $ \beta, \gamma\le \alpha$, the following statements hold.
\begin{enumerate}
\item\label{lv}  If $\gamma$  is a limit ordinal greater than $\beta$, then $c_{\gamma,i}$ has no development over $H_\beta$.

\item\label{sv}   If $\gamma$ is a successor ordinal greater than $\beta$, then $c_{\gamma,i}$ has no valuation in $H_\beta$.

\item\label{ld}  If $\beta$ is $0$ or a limit ordinal, then $c_{\beta,i}\in B_\beta$ and $c_{\beta,i}$ has the development $\sum_{ i\le j<\omega} c_{\beta_j,j}\in \ek{H_\beta^{>1}}$  where the sequence $(\beta_i)_{i\in\omega}$ is defined as follows.  If $\beta=0$, then $\beta_i=i$ for all $i\in\omega$.  If $\beta$ is a limit ordinal, $(\beta_i)_{i\in\omega}$  is the sequence of successor ordinals converging to $\beta$  given by the  notation for
$\beta$ in the fixed path through $\mathcal{O}$.

\item\label{sd} If $\beta$ is a successor ordinal, then $c_{\beta,i}$ is in $H_\beta^{>1}$.
\end{enumerate}
\end{lem}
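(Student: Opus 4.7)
The plan is a transfinite induction on $\beta \le \alpha$, proving all four clauses simultaneously. At each stage I first establish whichever of (ld) or (sd) applies at $\beta$, and then use these together with the inductive hypothesis at smaller ordinals to rule out $c_{\gamma, i}$ for $\gamma > \beta$ from entering $B_\beta$, obtaining (lv) and (sv).

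For (ld) at the base case $\beta = 0$, the description of $c_{0, i}$ yields directly that the successive valuations $v\bigl(c_{0, i} - \sum_{i \le j < k} y_j\bigr) = y_k \in H_0$ all have residue $1$, so the maximum development of $c_{0, i}$ over any triple with value group section $H_0$ is the infinite series $\sum_{i \le j < \omega} y_j \in \ek{H_0^{>1}}$. This series is not in the image of $RC(k \cup H_0)$, and since $c_0$ is the first element in $\prec_\alpha$-order after $y$ with a nontrivial development in $\ek{H_0}$, the canonical maximal extension (Lemma \ref{Maximal}) adjoins it in the immediate transcendental case; the algebraic identities $c_{0, i} = c_0 - \sum_{j < i} y_j$ then place the remaining $c_{0, i}$ in $B_0$. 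The successor step for (sd) is immediate from the definition $c_{\beta, i} = 2^{c_{\beta - 1, i + 1}}$, IH (ld) at $\beta - 1$, and the construction of $H_\beta$. For (ld) at a limit $\beta$, the same argument applies with $c_{\beta_j, j}$ in the role of $y_j$: by IH (sd), each $c_{\beta_j, j}$ lies in $H_{\beta_j}^{>1} \subseteq H_\beta^{>1}$; they are strictly decreasing by the inequality chain proved just before the lemma; and the description of $c_{\beta, i}$ supplies the coefficient-$1$ approximations, so the development is the advertised infinite series in $\ek{H_\beta^{>1}}$, not in the image of $\bigcup_{\beta' < \beta} B_{\beta'}$ by the accumulated (lv) at earlier stages.

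For (sv) at $\beta$ with $\gamma = \mu + 1 > \beta$, suppose $v(c_{\gamma, i}) = 2^{c_{\mu, i + 1}}$ lay in $H_\beta$; then $c_{\mu, i + 1} = \log h$ for some $h \in H_\beta$. By Lemma \ref{L:logh}, $\log h$ lies in $B_{\gamma'}$ for some $\gamma' < \beta$ with $\phi_{\gamma'}(\log h) \in \ek{H_{\gamma'}^{>1}}$ of finite support, obtained from expressing $h$ as a finite product of generators. However, by IH the development of $c_{\mu, i + 1}$ is an infinite series (directly from (ld) when $\mu$ is zero or a limit, or via iterated (sd) and Lemma \ref{L:Halphalog} when $\mu$ is a successor), a contradiction. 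For (lv) at $\beta$ with $\gamma > \beta$ a limit and fundamental sequence $(\gamma_j)$, let $j_0$ be least with $\gamma_{j_0} > \beta$; the development of $c_{\gamma, i}$ over $(B_\beta, H_\beta, \phi_\beta)$ proceeds using the $c_{\gamma_j, j} \in H_\beta$ for $j < j_0$ and halts at $c_{\gamma_{j_0}, j_0} \notin H_\beta$ by (sv). The truncated development is a finite sum already in $\phi_\beta(B_\beta)$, so $c_{\gamma, i}$ falls into the value-transcendental case and cannot enter $B_\beta$ without enlarging $H_\beta$.

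The principal obstacle is orchestration: (ld) at a limit $\beta$ requires $c_{\beta, i} \notin \bigcup_{\beta' < \beta} B_{\beta'}$, which is precisely what accumulated (lv) supplies at earlier stages, while (sv) and (lv) at $\beta$ rely on the infinite-length character of the developments produced by (ld) and (sd) at lower ordinals. Each individual implication is either a direct comparison of support lengths or a straightforward invocation of Lemma \ref{Maximal} together with Lemmas \ref{L:logh} and \ref{L:Halphalog}, but interleaving the four clauses in the correct order across the transfinite induction is the delicate point.
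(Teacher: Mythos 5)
Your overall plan (a single transfinite induction on $\beta$ that proves all four clauses) matches the paper's, and your treatment of (lv) via the fundamental-sequence subtraction is essentially the paper's. But there are two genuine gaps.

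The first is the orchestration. You prove (ld)/(sd) at $\beta$ first and claim (ld) at a limit $\beta$ needs only ``accumulated (lv) at earlier stages.'' Accumulated (lv) at $\beta' < \beta$ tells you $c_{\beta,i}\notin\bigcup_{\beta'<\beta}B_{\beta'}$, but that is not enough for (ld). The maximality extension producing $B_\beta$ from $\bigcup_{\beta'<\beta}B_{\beta'}$ processes elements in $\prec$-order, so the finitely many $c_\gamma$ with $\gamma>\beta$ that precede $c_\beta$ in the initial $\omega$ segment are examined \emph{before} $c_\beta$ is. If some such $c_\gamma$ had a nontrivial development over $H_\beta$, it could enter $B_\beta$ first and change the development that $c_\beta$ is then assigned over the enlarged triple. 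Ruling this out requires knowing (lv)/(sv) for $\beta$ itself, with $\gamma>\beta$ --- that is, statements about developments over $H_\beta$, not over the smaller $H_{\beta'}$. Accordingly the paper proves (sd), then (lv)/(sv) via a nested induction on $\gamma$, and only then (ld); your order has a circularity that needs to be undone.

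The second gap is in your (sv) argument. You assert that $\phi_{\gamma'}(\log h)$ has finite support because $h$ is a finite product of generators, and then contrast this with $c_{\mu,i+1}$ having an infinite development. Neither assertion is right. The generators of $H_{\lambda+1}$ include elements $2^r$ with $r\in B_\lambda$ and $\phi_\lambda(r)\in\ek{H_\lambda^{>1}}$ of arbitrary transfinite support, so $\log 2^r = r$ and hence $\log h$ need not have finite support. And when $\mu$ is a successor, (sd) places $c_{\mu,i+1}$ directly in $H_\mu^{>1}$, so as an element of $B_\mu$ its development is the single term $c_{\mu,i+1}$, not an infinite series; the ``via iterated (sd)'' reasoning confuses the development of $c_{\mu,i+1}$ with the developments of its iterated logarithms. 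The paper's argument avoids both issues: from the explicit product expansion of $h$ and Lemma \ref{L:logh} it derives $v(c_{\mu,i+1})\in H_\lambda$ for some $\lambda<\beta$, and this contradicts the outer induction hypothesis (sv) at $\lambda$ when $\mu$ is a successor, while for limit $\mu$ it subtracts finitely many $c_{\mu_j,j}\in H_\lambda^{>1}$ to reduce to a successor and again contradict (sv) at $\lambda$; the limit-$\gamma'$ case similarly appeals to the nested $\gamma$-induction. Your proposed contradiction does not replace this.
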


\begin{proof}  We begin with the case where $\beta=0$.    Clearly, $c_0$ will be assigned the development $\sum_{1\le i<\omega} y_i$ if it is the first element after $y$ in $\prec$.  However, $c_0$ may not be the first such element; there may be finitely many other $c_\beta$ before $c_0$.
Statements \ref{lv} and \ref{sv} imply that these finitely many $c_\beta$ would not interfere with assigning the development $\sum_{ 1\le i<\omega} y_i$ to $c_0$. Hence, Statements \ref{lv} and \ref{sv} give Statement \ref{ld}.

We begin by showing for all $\gamma>0$ and all $i\in\omega$ that $c_{\gamma,i}$ has no valuation in $H_0$.    Suppose otherwise, and let $\gamma$ be the first ordinal witnessing the failure.  If $\gamma$ is a limit ordinal, then the valuation of $c_{\gamma,i}$ is the same as that of $c_{\gamma_i,i}$, where
$\gamma_i$ is a smaller successor ordinal.  So, we may suppose that $\gamma = \lambda+1$ for some $\lambda$.  The element  $c_{\lambda+1,i}$ was defined to be $2^{c_{\lambda,i+1}}$.   Since $c_{\lambda+1,i}$ has a valuation in $H_0$,
we have that $c_{\lambda+1,i}$ equals
$cy_0^{q_0}y_1^{q_1}\cdots y_n^{q_n}$, where $c$ is finite and the  $q_i\in\mathbb{Q}$.  Then, taking logs, we have $c_{\lambda,i+1} = \log(c) + q_0y_1 + q_1y_2 + \cdots + q_ny_{n+1}$.  We see that
$c_{\lambda,i+1}$ has valuation equal to some $y_i$, which is in $H_0$, since both $c_{\lambda+1,i}$ and $c_{\lambda,i+1}$ are infinite. We must have $\lambda = 0$ and $\gamma = 1$, since otherwise we have reached a contradiction.
If we use a different ordering, putting $c_0$ first after $y$, then $c_0$ would be assigned the desired development.  Then,
$c_{0,i+1}$ would be in
$B_0$ and $c_{1,i} = 2^{c_{0,i+1}}$ would be in $H_1 - H_0$ by Ressayre's construction.  Therefore, $c_{1,i}$ has no valuation in $H_0$.
So, Statements \ref{lv}, \ref{sv},\ref{ld}, and \ref{sd}  hold when $\beta=0$.

Suppose $\beta\le\alpha$ is arbitrary and that the statements in
Lemma \ref{L:Ind} hold  for all $\lambda<\beta$ and all $\gamma\le\alpha$.

We begin by proving  Statement \ref{sd}.  If $\beta=\lambda+1$ is a successor ordinal, we have that $c_{\beta, i}=2^{c_{\lambda, i+1}}$.  If $\lambda$ is itself a successor ordinal, we have that ${c_{\lambda, i+1}}\in H_\lambda^{>1}$ by  Statement \ref{sd}  for $\lambda$ of the induction hypothesis.  Since ${c_{\lambda, i+1}}\in H_\lambda^{>1}$,  $c_{\beta, i}=2^{c_{\lambda, i+1}}\in H_\beta^{>1}$ by construction.  If $\lambda$ is a limit ordinal, we have that
$c_{\lambda,i+1}\in B_\lambda$ is assigned a development in $\ek{H_\lambda^{>1}}$
by  Statement \ref{ld}  for $\lambda$ of the induction hypothesis.  Again, by construction, $c_{\beta, i}=2^{c_{\lambda, i+1}}\in H_\beta^{>1}$ as desired.

We now show that Statements \ref{lv} and \ref{sv} hold for $\beta$ and all $\gamma\le \alpha$ by induction on $\gamma$.
Given some ordinal $\gamma'$, additionally suppose that Statements \ref{lv} and \ref{sv} hold for all $\gamma<\gamma'$ with respect to $\beta$.   First, suppose $\gamma'=\gamma+1$ is a successor ordinal, and suppose for a contradiction that $c_{\gamma', i}=2^{c_{\gamma, i+1}}$ has a valuation in $H_\beta$.
By construction of $H_\beta$,  we have that
\begin{equation}
c_{\gamma', i}=2^{c_{\gamma, i+1}}=ch2^{b_1}\cdots2^{b_n}
\end{equation}
where $c$ is finite,  $h\in H_{\lambda}$, and $b_j\in B_\lambda$ so that $\phi_\lambda(b_j)\in \ek{H_\lambda^{>1}}$ for some ordinal $\lambda<\beta$.   Taking logs of both sides, we have that
\begin{equation}
c_{\gamma, i+1}=\log(c) + \log h+b_1+\ldots+b_n.
\end{equation}
 By Lemma \ref{L:logh}, $\log h\in B_\lambda$ and $\phi_\lambda(\log h)\in \ek{H_{\lambda}^{>1}}$.  Thus, $v(c_{\gamma, i+1})\in H_\lambda$.
If $\gamma$ is a successor ordinal,  this would contradict Statement \ref{sv} of the inductive hypothesis with respect to $\lambda$.  So, suppose $\gamma$ is a limit ordinal.  Consider the sequence $({\gamma_j})_{j\in\omega}$ of successor ordinals given by the notation for $\gamma$ such that $\lim_{j\rightarrow\infty}\gamma_j=\gamma$.  Let $l$ be the least natural number such that $\gamma_l>\lambda$.  We have $c_{\gamma_j, j}\in H_\lambda^{>1}$ for $j<l$ by Statement  \ref{sd} of the inductive hypothesis. Thus,
\begin{equation}
c_{\gamma, i+1}-\sum_{i+1\le j<l}c_{\gamma_j, j}=\log(c) + \log h+b_1+\ldots+b_n-\sum_{i+1\le j<l}c_{\gamma_j, j}
\end{equation}
The left hand side of the equation has the same  valuation as  $c_{\gamma_l, l}$ by definition of $c_{\gamma, i+1}$.  The right hand side of the equation consists of elements whose developments are in $\ek{H_\lambda^{>1}}$ and the finite element $\log c$.  Thus, $v(c_{\gamma_l, l})\in H_\lambda^{>1}$, contradicting Statement \ref{sv} of the inductive hypothesis applied to $\lambda$.  This completes the case where $\gamma'$ is a succesor ordinal.

Second, suppose that $\gamma'$ is a limit ordinal, and suppose for a contradiction that $c_{\gamma', i}$ has a development over $H_\beta$.   Consider the sequence $({\gamma'_j})_{j\in\omega}$ of successor ordinals given by the notation for $\gamma'$ such that $\lim_{j\rightarrow\infty}\gamma'_j=\gamma'$.  Let $l$ be the least natural number such that $\gamma'_l>\beta$.   By  Statement \ref{sd} of the induction hypothesis, we have that $c_{\gamma'_j, j}\in H_\beta^{>1}$ for all $\gamma'_j<\beta$.  If $\beta=\gamma'_j$, we also have $c_{\gamma'_j, j}\in H_\beta^{>1}$ by the proof above of Statement \ref{sd} for $\beta$.  Since $c_{\gamma', i}$ has a development over $H_\beta$,  the difference $c_{\gamma', i}-\sum_{i\le j<l}c_{\gamma'_j, j}$ does as well.  Thus,  $c_{\gamma', i}-\sum_{i\le j<l}c_{\gamma'_j, j}$  has a valuation in $H_\beta$.  Since $c_{\gamma', i}-\sum_{i\le j<l}c_{\gamma'_j, j}$ has the same valuation as $c_{\gamma'_l, l}$ by definition, $v(c_{\gamma'_l, l})\in H_\beta$.  Since $\gamma'_l>\beta$ is a successor ordinal less than $\gamma'$, this contradicts Statement \ref{sv} of the induction hypothesis with respect to $\beta$. This completes our induction on $\gamma'$.  We have proved Statements \ref{lv} and \ref{sv} for $\beta$ and all $\gamma\le\alpha$.

We finally prove Statement \ref{ld} for $\beta$.  Suppose $\beta$ is a limit ordinal.
By Statements \ref{ld} and \ref{sd} of the induction hypothesis, we have that all $c_{\lambda, i}$ for $\lambda<\beta$ receive their desired developments in $B_\lambda$.   In particular, $c_{\beta_j, j}\in H_\beta^{>1}$ for all $j\in\omega$.   We have that no element $c_{\gamma,k}$ for $\gamma>\beta$ has a development over $H_\beta$ by Statements \ref{lv} and \ref{sv} for $\beta$. Since the elements $c_{\gamma}$  are the only elements that could come before $c_\beta$ in the initial $\omega$ segment of the well ordering $\prec$, the element $c_{\beta, i}$ will enter $B_\beta$ and $\phi_\beta(c_{\beta, i})=\sum_{i\le j<\omega} c_{\beta_j, j}\in \ek{H_\beta^{>1}}$.
Thus, Statement \ref{ld} holds for $\beta$.  This completes the proof of Lemma \ref{L:Ind}.

\end{proof}

We now show that $\Gamma_\alpha$ is consistent.
The formulas $\psi$ and $\varphi_\prec$ are satisfied by $R, k$, and $\prec_\alpha$ by construction.
We now show $\varphi_\lambda$ holds for each limit ordinal $\lambda< \alpha$.  If

If $(B_j, H_j, \gamma_j)$ is not in  $L_\lambda$ for some  $j<\lambda$, then $\varphi_\lambda$ holds trivially.
Otherwise,  by Lemma \ref{L:Ind} Statements \ref{lv} and \ref{sv},  there is an element of
\mbox{$B_j - \cup_{\gamma<j} B_\gamma$,} namely $c_j=c_{j, 1}$, so $\varphi_\lambda$ is satisfied.
Thus, $\Gamma_\alpha$ is consistent.

We are in a position to apply Barwise Compactness.  By Theorem \ref{T:BarwiseCompact}, we obtain an $\omega$-model of $KP$ with $R$ and $k$ as elements, and a $\Delta^0_3$ ordering $\prec$ of type $\omega+\omega$ such that if  Ressayre's construction is run on $R$, $k$, and $\prec$, producing a chain of development triples  $(B_i, H_i, \phi_i)_{i<\zeta}$ leading to the first non-trivial maximal and dyadic triple $(R_1, G_1, \delta_1)$, then either some triple $(B_j, H_j, \gamma_j)$ for $j<\zeta$ is not in  $L_{\omega_1^{CK}}$,
or else the length of the chain $\zeta$ is noncomputable.  This completes the proof of Theorem \ref{offcharts}.  Hence, Ressayre's construction on $R$, $k$, and  $\prec$ cannot be completed in $L_{\omega_1^{CK}}$.

Although Ressayre's construction may not be carried out in $L_{\omega_1^{CK}}$,
we can use $\Sigma$-saturation obtain an exponential integer part in a fattening of $L_{\omega_1^{CK}}$.

\begin{prop}\label{fatteningEIP}

Let $R$ be a hyperarithmetical real closed exponential field.  There is an exponential integer part
$Z$ such that \ $(R,Z)$ lives in a fattening of $L_{\omega_1^{CK}}$.  

\end{prop}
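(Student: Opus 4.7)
The plan is to realize $(R,Z)$ as a $\Sigma_A$-saturated structure, where $A = L_{\omega_1^{CK}}$, and then invoke Proposition~\ref{P:SigmaSat} to conclude that $(R,Z)$ lives in a fattening of $A$ with no new ordinals. Since $R$ is hyperarithmetical, take its universe to be $\omega$, so that $R \in A$. Form the expanded language $L' = L \cup \{r_n : n\in\omega\} \cup \{Z\}$, where $L$ is the language of real closed exponential fields, each $r_n$ is a constant naming the $n$th element of $R$, and $Z$ is a new unary predicate. Let $\Gamma$ be the set of $L'_A$-sentences consisting of the atomic diagram $D(R)$, the $L_A$-sentence $\forall x\,\bigvee_{n<\omega}(x = r_n)$ forcing the universe to be exhausted by the constants, and the finitely many finitary axioms asserting that $Z$ defines an exponential integer part of the ambient field. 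Because $R\in A$, the set $\Gamma$ is $\Sigma_1$ on $A$.

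Consistency is immediate from Theorem~B$'$: any exponential integer part $Z_0$ of $R$ furnished by Ressayre's construction gives $(R,Z_0)\models\Gamma$, so in particular every $A$-finite subset of $\Gamma$ has a model. I would then invoke Ressayre's $\Sigma$-saturated strengthening of Barwise Compactness (\cite{R0,R1}) to obtain a \emph{$\Sigma_A$-saturated} model $(R^*,Z^*)$ of $\Gamma$. The two clauses $D(R)$ and $\forall x\,\bigvee_n(x = r_n)$ force $R^*$ to coincide with $R$ under the natural interpretation $r_n^{R^*}\mapsto r_n$, so $Z := Z^*\subseteq R$ is an exponential integer part of $R$ and $(R,Z)$ is itself $\Sigma_A$-saturated. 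Proposition~\ref{P:SigmaSat} then places $(R,Z)$ in a fattening of $A = L_{\omega_1^{CK}}$, which is exactly the conclusion of the proposition.

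The main obstacle is securing the $\Sigma_A$-saturated model in the compactness step. Theorem~\ref{T:BarwiseCompact} on its own delivers \emph{some} model of $\Gamma$ but need not deliver a $\Sigma_A$-saturated one, and Theorem~\ref{T:SigmaSat} cannot be applied directly because we are not given a priori that $R$ is $\Sigma_A$-saturated. What is required is Ressayre's companion construction from \cite{R0,R1}: a Henkin-style procedure run inside a fattening of $A$ that enumerates all $\Sigma_1$-on-$A$ types of the growing Henkin model and realizes each of them (and handles the disjunctive clause (2) of $\Sigma$-saturation in the same way). Once this refined compactness is in hand, the identification of $R^*$ with $R$ via the atomic diagram clauses and the concluding appeal to Proposition~\ref{P:SigmaSat} are both routine.
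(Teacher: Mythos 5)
There is a genuine gap: you explicitly rule out applying Theorem~\ref{T:SigmaSat} on the grounds that ``we are not given a priori that $R$ is $\Sigma_A$-saturated,'' but this is exactly the missing observation. Since $R$ is hyperarithmetical, $R$ is an element of $A = L_{\omega_1^{CK}}$, so $A$ is itself a fattening of $A$ (with no new ordinals) containing $R$; by Proposition~\ref{P:SigmaSat}, $R$ is therefore $\Sigma_A$-saturated, trivially. Once this is noted, Theorem~\ref{T:SigmaSat} applies directly: take $\Gamma$ to be the (computable, hence $\Sigma_1$ on $A$) set of sentences in the language with a new predicate $Z$ asserting that $Z$ is an exponential integer part; Theorem~B guarantees that the $L$-consequences of $\Gamma$ hold in $R$; and Theorem~\ref{T:SigmaSat} produces an expansion $(R,Z)\models\Gamma$ that is again $\Sigma_A$-saturated, which Proposition~\ref{P:SigmaSat} places in a fattening of $L_{\omega_1^{CK}}$.

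Because you miss this, you fall back on trying to rebuild Ressayre's $\Sigma$-saturated compactness machinery from scratch (atomic diagram plus an $\omega$-rule clause to pin down the universe, followed by a Henkin construction realizing all $\Sigma_1$-on-$A$ types). This is morally how Theorem~\ref{T:SigmaSat} is proved, but you correctly flag that you do not actually have this tool in hand, so the proof as written does not close. The fix is one line: observe that $R\in A$ makes $R$ $\Sigma_A$-saturated, and then your own plan collapses to a direct application of Theorem~\ref{T:SigmaSat}, which is exactly the paper's argument.
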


\begin{proof}

Since $R$ is hyperarithmetical, it is trivially $\Sigma_A$-saturated.  Let $\Gamma$ be the natural set of sentences saying that $Z$ is an exponential integer part.  By Theorem B, $R$ has an exponential integer part, so the consequences of $\Gamma$ are true in $R$.  Therefore, by Theorem \ref{T:SigmaSat}, there is an exponential integer part $Z$ such that $(R,Z)$ is $\Sigma_A$-saturated.  This means that $(R,Z)$ lives in a fattening of $L_{\omega_1^{CK}}$, with no non-computable ordinals by Proposition \ref{P:SigmaSat}.

\end{proof}

\end{document}